\documentclass[11pt]{article}
\usepackage{cite}
\usepackage{enumitem}
 \usepackage{amsfonts}
\usepackage{amsmath}
\usepackage{amsthm}
\usepackage{amssymb}
\usepackage{amscd}

\DeclareMathOperator{\dist}{dist}
\DeclareMathOperator{\intr}{int}

\DeclareMathOperator{\cl}{cl}
\DeclareMathOperator{\bd}{bd}
\DeclareMathOperator{\co}{co}

\newtheorem{definition}{Definition}[section]
\newtheorem{theorem}{Theorem}[section]
\newtheorem{corollary}{Corollary}[section]
\newtheorem{lemma}{Lemma}[section]
\newtheorem{proposition}{Proposition}[section]
\newtheorem{remark}{Remark}[section]

\newtheorem{counterexample}{Counterexample}[section]

\begin{document}
\title{{ Directional Derivatives and Error Bounds of Merit Functions in Vector Optimization }}
\author{{Yu Han \footnote{Corresponding author,  E-mail: hanyumath@163.com}}\\
{\small\it School of Statistics and Data Science, Jiangxi University of Finance and Economics, }\\
{\small\it  Nanchang, Jiangxi 330013,  China} }
\date{}
\maketitle
\vspace*{-9mm}
\begin{center}
\begin{minipage}{6.2in}
{\bf Abstract.}    This paper presents a comprehensive analysis of directional derivatives and error bounds for the merit function $\theta(x)=\sup_{a\in A}\bigl(-\Delta_C(F(x)-F(a))\bigr)$ associated with the vector optimization problem $\operatorname{Min}_C\{F(x):x\in A\}$, where $\Delta_C$ is the  oriented distance function.   We first prove that $\theta$ is concave and Lipschitz continuous on the whole space and derive its dual representation via the weak$^*$ compact convex set $K=\overline{\operatorname{co}}^{w^*}(S(C^+))$.  
At a weakly efficient solution $\bar x$, we obtain the explicit formula
$\theta'(\bar x;d)=\min_{y^*\in W(\bar x)}\langle y^*,F(d)\rangle$ with \(W(\bar{x})=\{y^*\in K:F^*y^*\in -N_A(\bar{x})\}\), characterize the zero-directional-derivative cone, and prove that, under a local error bound condition, the tangent cone to the solution set is
$T_{E_w}(\bar x)=T_A(\bar x)\cap T_{\widehat A}(\bar x)=\{d\in T_A(\bar x):\theta'(\bar x;d)=0\}$.  
We establish the equivalence of thirteen distinct global error bound conditions, including characterizations via linear regularity, the global slope, an asymptotic   condition, and perturbation stability.
A central result shows that the global error bound property for \(\theta\) on the feasible set \(A\) is   characterized by a uniform negativity condition on the unit-sphere minimal directional derivative, namely \(\sup_{x \in A \setminus E_w} \varphi(x) < 0\).
We also determine the optimal local error bound constant precisely as $1/\varphi(\bar x)$ when $\varphi(\bar x)>0$, and provide a counterexample demonstrating that an additional directional condition is essential when $\varphi(\bar x)=0$.  
These results provide a complete bridge between the directional derivative of the merit function and the geometry of the solution set, offering fundamental tools for the convergence analysis of algorithms in vector optimization.
   \\ \ \\
{\bf Keywords:} Vector optimization; Merit function; Error bound; Directional derivative; Oriented distance function.
\\ \ \\
{\bf AMS Subject Classifications:}  90C29; 90C31; 90C48; 49K40.

\end{minipage}
\end{center}
\section{Introduction}
Vector optimization, which deals with the problem of optimizing multiple conflicting objectives simultaneously, has emerged as a fundamental framework in modern optimization theory with wide-ranging applications in economics, engineering, and decision sciences \cite{Ehrgott, Jahn, Luc}. Unlike scalar optimization, where a total order provides a natural notion of optimality, vector optimization relies on partial orders induced by convex cones, leading to the concepts of Pareto efficiency and weak efficiency. The mathematical analysis of such problems requires sophisticated tools from convex analysis, variational analysis, and nonlinear functional analysis to handle the inherent nonsmoothness and set-valued nature of the solution maps.

A powerful approach to analyzing vector optimization problems is the construction of merit functions---scalar-valued functions whose global minimizers coincide with the (weakly) efficient solutions of the original vector problem. Merit functions serve as indispensable tools in optimization, providing a bridge between the multiobjective nature of the problem and classical scalar optimization theory. They play a central role in the design and convergence analysis of numerical algorithms, and are also instrumental in deriving optimality conditions \cite{Fukushima, Hearn, FacchineiPang}.  In the context of vector optimization, the construction of suitable merit functions has attracted considerable attention in recent years. 

The Hiriart-Urruty oriented distance function, introduced by Hiriart-Urruty \cite{Hiriart1, Hiriart2}, provides a remarkably elegant framework for this purpose. This function, defined as \(\Delta_D(y) := d_D(y) - d_{Y\setminus D}(y)\), encodes in a single real value the position of a point relative to a set \(D\), with its sign indicating the location of a point (interior, boundary, or exterior) relative to \(D\). Its fundamental properties---in particular its \(1\)-Lipschitz continuity and its convexity when \(D\) is convex---make it an ideal scalarization device for set-valued problems. Zaffaroni \cite{Zaffaroni2003} further developed the theory of oriented distance functions and their applications in efficiency theory, establishing connections to various notions of minimality. The dual representation of \(\Delta_D\) via support functions, as established by Liu, Ng, and Yang \cite{LiuNgYang2009}   reveals its deep connections to the dual cone and the geometry of the underlying space.

The seminal work of Liu, Ng, and Yang \cite{LiuNgYang2009} introduced the merit function
\[
\theta(x)=\sup_{a\in A}\bigl(-\Delta_C(F(x)-F(a))\bigr)
\]
for the vector optimization problem \(\operatorname{Min}_C\{F(x):x\in A\}\), where \(F:X\to Y\) is a linear continuous mapping between Banach spaces and \(C\subset Y\) is a closed convex cone with nonempty interior. They established the fundamental properties of this merit function: \(\theta\) is nonnegative on the feasible set \(A\), its zero set coincides with the weakly efficient solution set \(E_w\), and it satisfies a crucial two-sided estimate \(r\,d_{\widehat A}(x)\le \theta(x)\le \|F\|\,d_{\widehat A}(x)\) for all \(x\in A\), where \(\widehat A = X\setminus(A+\operatorname{int}C_X)\) and \(C_X=F^{-1}(C)\). These results already demonstrate the potential of \(\theta\) as a quantitative measure of the distance to the solution set. However, several critical aspects of the theory remained unexplored in their work: the directional differentiability and explicit directional derivative formulas, the   characterization of error bounds via directional derivative conditions, the precise identification of optimal error bound constants, and the   geometric description of the solution set's tangent structure through the behavior of \(\theta\).

Error bound theory, originating from the classical work of Hoffman \cite{Hoffman} on linear inequalities, has evolved into a rich and mature field with profound implications for the convergence analysis of optimization algorithms, sensitivity analysis, and the study of metric regularity.   A comprehensive theory of error bounds for lower semicontinuous functions was developed by Az\'e and Corvellec \cite{AC2004}, who characterized error bounds using the strong slope, and by Ng and Zheng \cite{NgZheng}, who established connections between error bounds and directional derivatives for convex functions. The general theory of error bounds and metric subregularity has been systematically developed by Kruger \cite{Kruger}, who provided characterizations in terms of primal and subdifferential slopes.  
Ng and Yang \cite{NgYang2002} studied error bounds for abstract linear inequality systems in Banach spaces, while  Klatte and Li   \cite{KL1999} characterized global error bounds for convex inequalities via asymptotic constraint qualifications.
More recently, Kruger, Ngai, and Théra \cite{KNT2010} and Kruger, López, and Théra \cite{KLT2018} developed stability theory for error bounds under  $\epsilon$-perturbations, characterizing the radius via boundary subdifferential slopes.

In the context of vector optimization, the study of error bounds for merit functions has been advanced through several distinct lines of research. Liu, Ng, and Yang \cite{LiuNgYang2009} established a foundational framework for linear vector optimization problems, demonstrating that global error bounds for their merit function are equivalent to the linear regularity of the feasible set and the solution set. For nonsmooth convex problems, Dutta, Kesarwani, and Gupta \cite{DKG2017} developed gap functions based on subdifferentials and provided error bound estimates under strong convexity assumptions. More recently, Tanabe, Fukuda, and Yamashita \cite{TanabeNew} proposed new merit functions
for multiobjective optimization with lower semicontinuous objectives, convex objectives, and composite objectives, and provided sufficient conditions for boundedness of the level set and error bounds. Despite these significant contributions, a comprehensive and unified theory of error bounds for the vector optimization merit function \(\theta\)---one that   characterizes the global error bound through directional derivative conditions, establishes optimal constants, and reveals the complete equivalence with various stability properties---has remained lacking.

The present paper aims to fill this gap by providing a complete theory of directional derivatives and error bounds for the merit function \(\theta\). Our contributions are fourfold.

First, we establish a systematic theory of directional derivatives for \(\theta\). We prove that \(\theta\) is concave and Lipschitz continuous on the entire space \(X\) (Theorem \ref{YL:thetaprop2}), and derive its dual representation
\[
\theta(x)=\min_{v\in K}\left(\langle v,F(x)\rangle-\inf_{a\in A}\langle v,F(a)\rangle\right),
\]
where \(K=\overline{\operatorname{co}}^{w^*}(S(C^+))\)  (Theorem \ref{YDOBS}). This representation is fundamental for all subsequent developments. For a weakly efficient solution \(\bar x\in E_w\), we obtain the explicit directional derivative formula
\[
\theta'(\bar x;d)=\min_{y^*\in W(\bar x)}\langle y^*,F(d)\rangle,
\quad
W(\bar x)=\{y^*\in K:F^*y^*\in -N_A(\bar x)\},
\]
(Theorem  \ref{TTXEQ}), and characterize the zero-directional-derivative cone
\[
D(\bar x)=F^{-1}\bigl(\operatorname{bd}(W(\bar x)^+)\bigr)
\]
as the preimage of the boundary of the polar cone of \(W(\bar x)\). This reveals the precise link between the directional behavior of \(\theta\) and the geometry of the normal cone to the feasible set and the dual cone structure. We further establish that \(\theta'(\bar x;d)\ge 0\) for all \(d\in T_A(\bar x)\) exactly characterizes weak efficiency (Theorem \ref{ZSQE}), and prove that under a local error bound condition, the tangent cone to the weakly efficient solution set admits the decomposition
\[
T_{E_w}(\bar x)=T_A(\bar x)\cap T_{\widehat A}(\bar x)=\{d\in T_A(\bar x):\theta'(\bar x;d)=0\}
\]
(Theorem  \ref{ZAQW}). This geometric characterization directly links the local geometry of the Pareto set to the first-order behavior of the merit function.

Second, we develop a comprehensive equivalence theory for the global error bound property. We prove that the following thirteen conditions are equivalent (Theorem \ref{XLWCJK}): (i) the global error bound for \(\theta\) on \(A\); (ii) linear regularity of the pair \(\{A,\widehat A\}\); (iii) the error bound for the distance function \(d_{\widehat A}\) on \(A\); (iv) a uniform descent condition; (v) a perturbation stability condition; (vi) an extended-domain error bound; (vii) an error bound via the infimal convolution of \(\theta\) and the distance to \(A\); (viii) a descent condition; (ix) a uniform lower bound on the global slope; (x) an error bound for the image under \(F\); (xi) an asymptotic   condition; (xii) Hausdorff stability of sublevel sets; and (xiii) an inverse-sublevel characterization. This unified framework consolidates and substantially extends previous results in the literature, including those of Liu, Ng, and Yang \cite{LiuNgYang2009}, Ng and Zheng \cite{NgZheng}, and the general error bound theory of Kruger \cite{Kruger}, Az\'e and Corvellec \cite{AC2004}, and others. Notably, conditions (v), (xii), and (xiii) are new in the vector optimization context and connect the error bound theory to the perturbation stability framework of Kruger, L\'opez, and Th\'era \cite{KLT2018}.

Third, we provide a sufficient condition for the global error bound in terms of directional derivatives. For \(x\in A\), define the unit-sphere minimal directional derivative
\[
\varphi(x):=\inf\{\theta'(x;d):d\in T_A(x),\,\|d\|=1\}.
\]
We prove (Theorem \ref{TXEvQ}) that if
\[
\sup_{x\in A\setminus E_w}\varphi(x)<0,
\]
then \(\theta\) has a global error bound on \(A\). This reveals that a uniform negative directional derivative away from the solution set is sufficient for a global estimate of the distance to \(E_w\).  Moreover, when \(\bar x\in E_w\) and \(\varphi(\bar x)>0\), we identify the optimal local error bound constant precisely as
\[
\tau^*(\bar x)=\frac{1}{\varphi(\bar x)}<+\infty
\]
(Theorem \ref{TYXWs}). When \(\varphi(\bar x)=0\), we provide a counterexample demonstrating that an additional directional condition is essential for the existence of a local error bound---the condition \(\varphi(\bar x)=0\) alone does not preclude a local error bound. This counterexample also reveals the subtlety of the local error bound theory and highlights the importance of the tangent cone \(T_{E_w}(\bar x)\) in determining whether a zero directional derivative direction can escape the solution set.

Fourth, we prove a quantitative lower bound on the directional derivative under local error bound and local convexity assumptions (Theorem \ref{TaqXEQ}):
\[
\theta'(\bar x;d)\ge \frac{1}{\tau}\operatorname{dist}\bigl(d,T_{E_w}(\bar x)\bigr),\qquad d\in T_A(\bar x),
\]
when \(\theta\) admits a local error bound at \(\bar x\) with constant \(\tau\). This inequality   provides a rigorous foundation for the convergence analysis of algorithms that exploit the directional derivative structure of the merit function. It also establishes a reciprocal relationship between the error bound constant and the rate at which the directional derivative grows with the distance to the tangent cone of the solution set.

The results of this paper have immediate implications for algorithm design in vector optimization. The explicit directional derivative formulas enable the development of first-order descent methods, while the error bound characterizations provide theoretical guarantees for convergence rates.  In particular, the observation that the uniform negativity of \(\varphi\) is sufficient for a global error bound suggests practical stopping criteria and a way to estimate error bound constants directly from first-order information. The tangent cone decomposition (Theorem \ref{ZAQW}) provides a geometric characterization of the solution set that can guide the design of feasible-direction methods and proximal-type algorithms. Recent work on multiobjective descent methods by Fliege, Gra\~na Drummond, and Svaiter \cite{Fliege}, and Tanabe, Fukuda, and Yamashita \cite{TanabeProx}  can potentially benefit from the error bound theory developed here, as the convergence rates of many algorithms depend crucially on the presence of a global error bound for the underlying merit function. Huang, Jiao, Kim, and Yin \cite{HJKY2026} have recently extended merit function approaches to vector polynomial optimization over LMI constraints, demonstrating the broad applicability of this framework.

 The remainder of the paper is organized as follows. Section 2 collects the necessary preliminaries, including basic notation, fundamental properties of the  oriented distance function, and the cone-theoretic framework underlying the vector optimization problem.
  Section 3 is devoted to the analytic properties of the merit function \(\theta\), where we establish its concavity and Lipschitz continuity, derive a dual representation in terms of the weak\(^*\)-compact convex set \(K = \overline{\operatorname{co}}^{w^*}(S(C^+))\), and characterize its superdifferential at weakly efficient points. Section~4 presents the main error bound results, including the equivalence of thirteen characterizations and the relationships among them. Section~5 develops the theory of directional derivatives of \(\theta\), including explicit formulas, tangent cone characterizations, and the relationship to distance to the tangent cone. Section~6 characterizes error bounds in terms of directional derivatives and establishes the optimal local error bound constant. A counterexample illustrates the sharpness of the theoretical results. Section~7 concludes the paper with a summary and discussion of future directions.

\section{Preliminaries}
\subsection{Basic notation}

Throughout this paper, unless otherwise specified,
let $X$ and $Y$ be real Banach spaces with topological duals $X^*$ and $Y^*$, respectively.  Let $A \subseteq X$ be a nonempty closed convex set, $C \subseteq Y$ be a closed convex cone with nonempty interior ($\intr C \neq \emptyset$), and $F: X \to Y$ be a continuous linear map. Consider the vector optimization problem
\begin{equation}\label{eq:VP}
	\operatorname{Min}_C \; F(x) \quad \text{subject to} \quad x \in A,
\end{equation}
where $\operatorname{Min}_C$ denotes minimization with respect to the partial order $\leq_C$ induced by $C$, i.e., $y_1 \leq_C y_2$ if and only if $y_2 - y_1 \in C$. The strict partial order $\ll_C$ is defined by $y_1 \ll_C y_2$ if and only if $y_2 - y_1 \in \intr C$.

A feasible point $\bar{x} \in A$ is called a \emph{weakly efficient solution} of \eqref{eq:VP} if there exists no $x \in A$ such that $F(x) \ll_C F(\bar{x})$, which is equivalent to
\[
F(\bar{x}) \notin F(A) + \intr C.
\]
The set of all weakly efficient solutions is denoted by $E_w(\text{VP})$ or simply $E_w$. Its image $F(E_w)$ consists of the \emph{weakly efficient points} of $F(A)$, denoted by $\operatorname{WMin}(F(A), C)$.

For a subset $A \subseteq X$ with respect to a closed convex cone $P \subseteq X$ ($\intr P \neq \emptyset$), the set of \emph{weakly efficient points} of $A$ is
\[
\operatorname{WMin}(A, P) := \{a \in A : a \notin A + \intr P\}.
\]

A set $A$ is said to have the  weak domination property  (WDP) with respect to $P$, if for every $z \in A \setminus \operatorname{WMin}(A, P)$, there exists $z' \in \operatorname{WMin}(A, P)$ such that $z' \leq_P z$.

\begin{remark} \label{RngXEQ}     It is easy to verify that  $A$ has  the  weak domination property with respect to $P$ if and only if for any $z \in A \setminus \operatorname{WMin}(A, P)$, there exists $z' \in \operatorname{WMin}(A, P)$ such that $z'  \ll_P  z$.
\end{remark}

The canonical bilinear pairing is denoted by $\langle \cdot, \cdot \rangle$: for $x^* \in X^*$ and $x \in X$, $\langle x^*, x \rangle := x^*(x)$. The norm on $X$ (and other spaces) is denoted by $\|\cdot\|$. The closed unit ball and unit sphere of $X$ are
\[
B_X := \{x \in X : \|x\| \leq 1\}, \quad S_X := \{x \in X : \|x\| = 1\}.
\]

$B(x_0,r)$ denotes the open ball with center $x_0$ and radius $r > 0$. For a subset $D \subseteq X$, its interior, boundary, closure, convex hull  and  closed convex hull    are denoted by $\intr D$, $\bd D$, $\cl D$, $\co D$ and $\overline{\co} D$,  respectively.

For a nonempty set \(D \subseteq X\), the distance from a point \(x \in X\) to the set \(D\) is defined as
\[
d_D(x) := \inf_{y \in D} \|x - y\|,
\]
with the convention that \(d_\emptyset(\cdot) = +\infty\). Occasionally, we use \(d(x,D)\) to denote the distance from a point \(x \in X\) to the set \(D\).
The distance between two nonempty sets \(Q \subseteq X\) and \(D \subseteq X\) is defined by
\[
d(Q,D) = \inf \{ \|x - y\| : x \in Q,\; y \in D \}.
\]
The Hausdorff distance between $Q$ and $D$ is defined by
$$d_H \left( {Q,D} \right): = \max \left\{ {g\left( {Q,D} \right),g\left( {D,Q} \right)} \right\},$$
where $g\left( {Q,D} \right): = \mathop {\sup }\limits_{x \in Q} d\left( {x,D} \right)$.

The  indicator function  of a  nonempty set   $D$ is
\[
\delta_D(x) := \begin{cases}
	0, & x \in D, \\
	+\infty, & x \notin D.
\end{cases}
\]

For  $ x^* \in X^*$,     the support function   of a  nonempty set   $D$ is
$$\sigma_D(x^*)=\sup_{y\in D}  \;  \langle x^*,y\rangle.$$

The \emph{projection} of a point $x$ onto a closed convex set $D$ is the (possibly empty) set
\[
P_D(x) := \{\bar{x} \in D : \|x - \bar{x}\| = d_D(x)\}.
\]
When $X$ is a Hilbert space, $P_D(x)$ is a singleton for every $x \in X$.

The \emph{positive polar cone} (or dual cone) of a set $Q \subseteq X$ is
\[
Q^+ := \{x^* \in X^* : \langle x^*, x \rangle \geq 0, \; \forall x \in Q\},
\]
and the \emph{negative polar cone} is $Q^\ominus := -Q^+ = \{x^* \in X^* : \langle x^*, x \rangle \leq 0, \; \forall x \in Q\}$.    The unit sphere of \(Q^+\) is denoted by \(S(Q^+)\), i.e., \(S(Q^+) = \{ x^* \in Q^+ : \|x^*\| = 1 \}\).

\begin{definition}  
 For a nonempty set $D \subseteq X$ and $x \in D$, the \emph{tangent cone} and \emph{normal cone} of  $D$ at  $x$ are defined, respectively, by  
 \begin{align*}
 	T_D(x) &:= \bigl\{ d \in X : \exists\, t_k \downarrow 0,\; \exists\, d_k \to d \text{ with } x + t_k d_k \in D \bigr\},\\
 	N_D(x) &:= \{x^* \in X^* : \langle x^*, h \rangle \leq 0, \; \forall h \in T_D(x)\}.
 \end{align*}
  \end{definition}

\begin{remark} \label{RtDFxl}    \cite{Zalinescu2002, AE1984}    Let  $D \subseteq X$ be a closed convex set  and $x \in D$.  Then
	\begin{align*}
		T_D(x) &= \cl \left( \bigcup_{t > 0} t(D - x) \right),\\
		N_D(x) &= \{ x^* \in X^* : \langle x^*, y - x \rangle \le 0,\ \forall y \in D \}.  
	\end{align*}
	Moreover, we have  $N_D(x) = T_D(x)^\ominus$ and $T_D(x) = N_D(x)^\ominus$.
\end{remark}

\subsection{The  oriented distance function}

The oriented distance function (see \cite{Hiriart1,Hiriart2}) of a set $D \subseteq Y$ is defined as
$$
\Delta_D(y) := d_D(y) - d_{Y \setminus D}(y) = \begin{cases}
	d_D(y), & y \in Y \setminus D, \\
	-d_{Y \setminus D}(y), & y \in D.
\end{cases}
$$
With the convention: if $D = \emptyset$, then $\Delta_D \equiv +\infty$; if $D = Y$, then $\Delta_D \equiv -\infty$.

Next,we collect the basic properties of the oriented distance function.
\begin{proposition}  (see \cite{XuLi2016, KTZ2015, Zaffaroni2003})  \label{XSQ}
	Let $D \subseteq Y$ be nonempty and $D \neq Y$. Then:
	\begin{itemize}
		\item[(i)] $\Delta_D$ is real-valued and $\Delta_{Y \setminus D} = -\Delta_D$;
		\item[(ii)] $\Delta_D$ is $1$-Lipschitz;
		\item[(iii)] $\Delta_D(y) < 0 \Leftrightarrow y \in \operatorname{int} D$;
		\item[(iv)] $\Delta_D(y) = 0 \Leftrightarrow y \in \partial D$;
		\item[(v)] $\Delta_D(y) > 0 \Leftrightarrow y \in \operatorname{int} D^c$;
		\item[(vi)] If $D$ is closed, then $D = \{y \in Y : \Delta_D(y) \leq 0\}$;
		\item[(vii)] If $D$ is a cone, then $\Delta_D$ is positively homogeneous;
		\item[(viii)] If $D$ is convex, then $\Delta_D$ is a convex function;
		\item[(ix)] If $D$ is a convex cone, then $\Delta_D$ is nonincreasing with respect to the order induced by $D$ on $Y$, i.e., for any $y_1, y_2 \in Y$,
		\[
		y_1 - y_2 \in D \;\Longrightarrow\; \Delta_D(y_1) \leq \Delta_D(y_2);
		\]
		if $D$ has nonempty interior, then
		\[
		y_1 - y_2 \in \operatorname{int} D \;\Longrightarrow\; \Delta_D(y_1) < \Delta_D(y_2).
		\]
	\end{itemize}
\end{proposition}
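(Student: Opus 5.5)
The plan is to derive everything from the two distance functions $d_D$ and $d_{Y\setminus D}$, using only elementary metric and convexity arguments. Since $D\neq\emptyset$ and $D\neq Y$, both distances are finite, so $\Delta_D$ is real-valued. Swapping the roles of $D$ and $Y\setminus D$ in the definition gives $\Delta_{Y\setminus D}=-\Delta_D$ directly, which is (i).

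For (iii)--(vi) I would use the standard identities
\[
d_D(y)=0 \iff y\in\cl D,\qquad d_{Y\setminus D}(y)>0\iff y\in\intr D .
\]
If $y\in\intr D$, then $d_D(y)=0$ and $d_{Y\setminus D}(y)>0$, so $\Delta_D(y)<0$. Conversely, $\Delta_D(y)<0$ forces $d_{Y\setminus D}(y)>0$, hence $y\in\intr D$. Item (v) follows by applying (iii) to $Y\setminus D$ together with (i). Item (iv) is then the complement of (iii) and (v), since $\bd D=Y\setminus(\intr D\cup\intr D^c)$. For (vi), note that when $D$ is closed, $\bd D\subseteq D$, so $\{\Delta_D\le0\}=\intr D\cup\bd D=D$.

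For (ii) I would split into cases.
\begin{itemize}
\item If $y_1,y_2$ lie on the same side (both in $D$ or both outside $D$), then $\Delta_D$ is, up to sign, a distance function, and distance functions are $1$-Lipschitz.
\item If $y_1\in D$ and $y_2\notin D$, consider the segment $[y_1,y_2]$. Then $d_{Y\setminus D}(y_1)\le\|y_1-y_2\|$ and $d_D(y_2)\le\|y_1-y_2\|$, but these two bounds alone only give a factor $2$.
\end{itemize}
The mixed case is where I expect the main work. I would pick a boundary point $z\in[y_1,y_2]\cap\bd D$, obtained as the supremum of the parameters $t$ with $y_1+t(y_2-y_1)\in D$. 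Because $z$ lies in both $\cl D$ and $\cl(Y\setminus D)$,
\[
d_{Y\setminus D}(y_1)\le\|y_1-z\|,\qquad d_D(y_2)\le\|z-y_2\|,
\]
and the two right-hand sides add up to $\|y_1-y_2\|$. This gives $|\Delta_D(y_1)-\Delta_D(y_2)|\le\|y_1-y_2\|$.

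For (vii), if $D$ is a cone and $t>0$, then $tD=D$ and $t(Y\setminus D)=Y\setminus D$, so both distances scale by $t$. At $t=0$, $\Delta_D(0)=0$ when $0\in\bd D$. If $0\in\intr D$, then $D=Y$, which is excluded.

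For (viii), convexity of $d_D$ is standard, but $-d_{Y\setminus D}$ is not obviously convex. I would use the representation, valid for convex $D$ with $y\in D$,
\[
d_{Y\setminus D}(y)=\inf_{\|y^*\|=1}\bigl(\sigma_D(y^*)-\langle y^*,y\rangle\bigr),
\]
obtained by separating points of $Y\setminus D$ from $D$. Combining it with $d_D(y)=\sup_{\|y^*\|\le1}(\langle y^*,y\rangle-\sigma_D(y^*))$ shows $\Delta_D(y)=\sup_{\|y^*\|=1}(\langle y^*,y\rangle-\sigma_D(y^*))$ on all of $Y$. As a supremum of affine functions, $\Delta_D$ is convex. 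The delicate points are the case $\intr D=\emptyset$ and checking that the supremum over the sphere, rather than the ball, is correct outside $D$. Alternatively, one can cite Hiriart-Urruty.

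For (ix), let $D$ be a convex cone and $y_1-y_2=c\in D$, so $y_1=y_2+c$. For each $y^*$ with $\sigma_D(y^*)<\infty$ we have $y^*\in -D^+$, and therefore $\langle y^*,c\rangle\le0$. Via the sup-representation this gives $\Delta_D(y_1)\le\Delta_D(y_2)$.

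For the strict inequality, take $c\in\intr D$. Then $\langle y^*,c\rangle\le-\delta\|y^*\|$ for some $\delta>0$, uniformly over $y^*\in -D^+$, since a ball $B(c,\delta)$ lies in $D$. Moreover, $\sigma_D=0$ on $-D^+$. So
\[
\Delta_D(y_1)\le\Delta_D(y_2)-\delta<\Delta_D(y_2).
\]
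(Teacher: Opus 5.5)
Since the paper gives no proof of this proposition and only cites the literature, your elementary treatment of (i)--(vii) is a reasonable self-contained route, and it is sound. The boundary point on the segment $[y_1,y_2]$ is exactly what the mixed case of (ii) needs. In (vii) you should add that $0\in\cl D$ for any nonempty cone, or simply get $\Delta_D(0)=0$ from continuity. The strict half of (ix) is also fine, because there $\intr D\neq\emptyset$ is assumed and your representation is valid in that case.

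\textbf{The gap: the representation fails when $\intr D=\emptyset$.} Item (viii) and the non-strict half of (ix) rest on the formula $d_{Y\setminus D}(y)=\inf_{\|y^*\|=1}\bigl(\sigma_D(y^*)-\langle y^*,y\rangle\bigr)$ for $y\in D$, which you claim for every convex $D$. The case you flagged as delicate is worse than delicate: the formula is false there.
\begin{itemize}
\item Take $Y=\ell^2$ and the convex cone $D=\{y\in c_{00}: y_1\ge 0\}$.
\item Since $\intr D=\emptyset$, the set $Y\setminus D$ is dense. Hence $d_{Y\setminus D}\equiv 0$ and $\Delta_D(y)=d_D(y)=\max\{0,-y_1\}$.
\item On the unit sphere, $\sigma_D(y^*)<+\infty$ only for $y^*=-e_1$. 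So your formula gives $d_{Y\setminus D}(e_1)=1$, and $\sup_{\|y^*\|=1}\bigl(\langle y^*,y\rangle-\sigma_D(y^*)\bigr)=-y_1\neq\Delta_D(y)$ whenever $y_1>0$.
\end{itemize}
The step that breaks is the separation. Points of $Y\setminus D$ near $e_1$ lie in $\cl D$, and they cannot be separated from $D$ by any nonzero functional. The paper's Proposition~\ref{wqaz} and Corollary~\ref{cor:Delta} state the same formula for arbitrary convex sets and fail on this example too. This is harmless in the paper, which applies them only to the closed cone $C$ with $\intr C\neq\emptyset$.

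\textbf{The repair is a case split.}
\begin{itemize}
\item If $\intr D=\emptyset$, then $\Delta_D=d_D$, which is convex. For a convex cone, $D+c\subseteq D$ for $c\in D$ gives $d_D(y+c)\le d_D(y)$, which is the non-strict half of (ix).
\item If $\intr D\neq\emptyset$, every $w\notin D$ can be separated from the open convex set $\intr D$ by Hahn--Banach. This makes your derivation of the representation go through, including the sphere-versus-ball issue on $\cl D\setminus D$.
\end{itemize}
For the non-strict half of (ix) you can avoid duality altogether. For $c\in D$ one has $D+c\subseteq D$ and $(Y\setminus D)-c\subseteq Y\setminus D$. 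Hence $d_D(y+c)\le d_D(y)$ and $d_{Y\setminus D}(y+c)\ge d_{Y\setminus D}(y)$, so $\Delta_D(y+c)\le\Delta_D(y)$ for any convex cone.
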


By Lemma 2 (ii) of \cite{LiuNgYang2009}, we have the following lemma.

\begin{lemma}  \label{QWED} Assume that  $D \subseteq Y$  is nonempty and convex and  $y \in Y$. If ${d_D}\left( y \right) > 0$,  then
	$${d_D}\left( y \right) = \mathop {\sup }\limits_{{y^*} \in {S_{{Y^*}}}} \mathop {\inf }\limits_{u \in D} \left\langle {{y^*},y - u} \right\rangle .$$	
\end{lemma}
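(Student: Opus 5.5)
We need to prove Lemma \ref{QWED}: for nonempty convex $D$ and $d_D(y)>0$,
\[
d_D(y)=\sup_{y^*\in S_{Y^*}}\inf_{u\in D}\langle y^*,y-u\rangle .
\]
The plan is to prove the two inequalities separately. The inequality ``$\ge$'' is elementary. The inequality ``$\le$'' comes from a Hahn--Banach separation of $D$ from an open ball around $y$.

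For ``$\ge$'', fix $y^*\in S_{Y^*}$ and any $u\in D$. Then $\langle y^*,y-u\rangle\le\|y^*\|\|y-u\|=\|y-u\|$. Hence
\[
\inf_{u\in D}\langle y^*,y-u\rangle\le \inf_{u\in D}\|y-u\|=d_D(y).
\]
Taking the supremum over $y^*$ gives the bound. This direction does not use convexity or positivity.

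For ``$\le$'', set $r:=d_D(y)>0$ and let $B(y,r)$ be the open ball. By the definition of the distance, $B(y,r)\cap D=\emptyset$. Both sets are convex and nonempty, and the ball is open, so the geometric Hahn--Banach theorem gives $y^*\in Y^*\setminus\{0\}$ such that
\[
\sup_{z\in B(y,r)}\langle y^*,z\rangle\le\inf_{u\in D}\langle y^*,u\rangle.
\]
The functional is nonzero, so we may normalize to $\|y^*\|=1$. The left-hand side then equals $\langle y^*,y\rangle+r\|y^*\|=\langle y^*,y\rangle+r$. For every $u\in D$ this yields $\langle -y^*,y-u\rangle\ge r$. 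So $w^*:=-y^*\in S_{Y^*}$ satisfies $\inf_{u\in D}\langle w^*,y-u\rangle\ge r$, and the supremum is at least $d_D(y)$. In fact this shows the supremum is attained.

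The only delicate point is the separation step. The hypothesis $r>0$ is what makes the ball nonempty and open, so that Hahn--Banach applies without requiring $D$ to be closed or to have interior. We must also keep track of the sign convention, which forces passing from $y^*$ to $-y^*$. If $r=0$ the formula can fail, for instance when $y\in\cl D$ and every separating functional degenerates; this explains why the positivity assumption appears in the statement.
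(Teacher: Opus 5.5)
Your proof is correct and complete. The open ball $B(y,r)$ with $r=d_D(y)>0$ is a nonempty open convex set disjoint from $D$, so the geometric Hahn--Banach theorem separates it from $D$. After normalization, $-y^*\in S_{Y^*}$ gives $\inf_{u\in D}\langle -y^*,y-u\rangle\ge r$, and the reverse inequality is just $\langle y^*,y-u\rangle\le\|y-u\|$.

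The paper does not prove this lemma; it imports it as Lemma~2(ii) of \cite{LiuNgYang2009}. Compared with that citation, your argument is self-contained and makes the hypotheses transparent. It uses only nonemptiness and convexity of $D$: no closedness, no interior, not even completeness of $Y$. That is exactly what Proposition~\ref{wqaz} needs when it applies the lemma in Case~1 to a possibly non-closed $D$. It also shows that the supremum is attained.

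Your closing explanation of why $d_D(y)>0$ is needed is vague, though. Conjugate duality gives a cleaner view. $d_D$ is the infimal convolution of $\|\cdot\|$ and $\delta_D$, so $d_D^*=\sigma_D+\delta_{B_{Y^*}}$. By Fenchel--Moreau, $d_D(y)=\sup_{\|y^*\|\le 1}\inf_{u\in D}\langle y^*,y-u\rangle$ for every $y\in Y$. Positivity is then used only to pass from the dual ball to the sphere: a positive value of the positively homogeneous map $y^*\mapsto\inf_{u\in D}\langle y^*,y-u\rangle$ can only increase under $y^*\mapsto y^*/\|y^*\|$.

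When $d_D(y)=0$, the value $0$ is attained at $y^*=0$. The sphere formula then genuinely fails for $y\in\intr D$, where the right-hand side is at most $-d_{Y\setminus D}(y)<0$. So the failure is an interior phenomenon, not a degeneration of separating functionals.
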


By Lemma  \ref{QWED} and the proof of Proposition 1  of \cite{LiuNgYang2009}, we   get the following lemma.  
\begin{lemma}  \label{QWAD} Assume that  $D \subseteq Y$  is nonempty, closed and convex, $D \ne Y$
	and  $y \in D$. Then
	$${d_{Y\backslash D}}(y) = \mathop {\inf }\limits_{{y^*} \in {S_{{Y^*}}}} \mathop {\sup }\limits_{u \in D} \left\langle {{y^*},y - u} \right\rangle .$$
\end{lemma}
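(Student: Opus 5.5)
Lemma \ref{QWAD} should follow from Lemma \ref{QWED} applied to the convex set $D$ and points just outside it, combined with the identity $d_{Y\setminus D}(y)=\inf\{\|y-z\|: z\notin D\}$ for $y\in D$. Write $\rho:=\inf_{y^*\in S_{Y^*}}\sup_{u\in D}\langle y^*,y-u\rangle$. Since $y\in D$, taking $u=y$ shows each inner supremum is $\ge 0$, so $\rho\ge 0$. I will prove the two inequalities $d_{Y\setminus D}(y)\le\rho$ and $\rho\le d_{Y\setminus D}(y)$ separately.

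For $\rho\le d_{Y\setminus D}(y)$, the plan is as follows. Fix $z\notin D$. Since $D$ is closed, $d_D(z)>0$, and Lemma \ref{QWED} gives $y^*\in S_{Y^*}$ with $\inf_{u\in D}\langle y^*,z-u\rangle>0$ up to an arbitrarily small loss; in fact only positivity is needed. Then for every $u\in D$ we have $\langle y^*,u\rangle<\langle y^*,z\rangle$, hence
\[
\sup_{u\in D}\langle y^*,y-u\rangle\le\langle y^*,y\rangle-\inf_{u\in D}\langle y^*,u\rangle.
\]
This is the wrong direction, so I will use $-y^*$ instead. Set $w^*=-y^*$. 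Then $\sup_{u\in D}\langle w^*,y-u\rangle=\sup_{u\in D}\langle y^*,u\rangle-\langle y^*,y\rangle\le\langle y^*,z\rangle-\langle y^*,y\rangle\le\|z-y\|$. Therefore $\rho\le\|z-y\|$, and taking the infimum over $z\notin D$ gives $\rho\le d_{Y\setminus D}(y)$. Because $D\neq Y$, such $z$ exist, so $d_{Y\setminus D}(y)$ is finite.

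For $d_{Y\setminus D}(y)\le\rho$, I would fix $y^*\in S_{Y^*}$ and let $s:=\sup_{u\in D}\langle y^*,y-u\rangle=\langle y^*,y\rangle-\inf_D\langle y^*,\cdot\rangle$. If $s=+\infty$ there is nothing to prove. Otherwise, for $\varepsilon>0$, choose $e\in Y$ with $\|e\|\le 1+\varepsilon$ and $\langle y^*,e\rangle=1$, which is possible since $\|y^*\|=1$. Set $z=y-(s+\varepsilon)e$. Then $\langle y^*,z\rangle=\langle y^*,y\rangle-s-\varepsilon<\inf_D\langle y^*,\cdot\rangle$, so $z\notin D$. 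Hence $d_{Y\setminus D}(y)\le\|y-z\|\le(s+\varepsilon)(1+\varepsilon)$. Letting $\varepsilon\to0$ and then taking the infimum over $y^*$ yields the claim.

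The only delicate point is the first inequality: it needs the correct sign of the separating functional. Lemma \ref{QWED} is used there merely as a Hahn–Banach separation of $z$ from the closed convex set $D$, so one could alternatively invoke strict separation directly. No convexity is needed in the second inequality.
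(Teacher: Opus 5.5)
Your proof is correct and follows the route the paper indicates: Lemma \ref{QWED} separates each $z\notin D$ from $D$, and after flipping to $-y^*$ this yields $\inf_{y^*\in S_{Y^*}}\sup_{u\in D}\langle y^*,y-u\rangle\le d_{Y\setminus D}(y)$, while moving $y$ past a near-supporting hyperplane along a near-norming vector gives the reverse inequality. The displayed ``wrong direction'' step is a harmless false start (it is just an identity) and can simply be deleted.
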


\begin{proposition} \label{wqaz} Let $D$ be a nonempty and convex subset of $Y$. Then 
	$${\Delta _D}(y) = \mathop {\sup }\limits_{{y^*} \in {S_{{Y^*}}}} \mathop {\inf }\limits_{u \in D} \left\langle {{y^*},y - u} \right\rangle , \quad \forall y \in Y.$$	
\end{proposition}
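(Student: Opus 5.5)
We prove the formula by splitting into three cases according to the position of $y$ relative to $D$, and we write $\psi(y):=\sup_{y^*\in S_{Y^*}}\inf_{u\in D}\langle y^*,y-u\rangle$ for the right-hand side.

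\textbf{Case 1: $y\notin \cl D$.} Then $d_D(y)>0$. Since $y$ has a neighbourhood disjoint from $D$, it lies in $\intr(Y\setminus D)$, so $d_{Y\setminus D}(y)=0$. Hence $\Delta_D(y)=d_D(y)$, and Lemma \ref{QWED} gives $d_D(y)=\psi(y)$ directly.

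\textbf{Case 2: $y\in \cl D$.} Here $d_D(y)=d_{\cl D}(y)=0$ and $d_{Y\setminus D}(y)=d_{Y\setminus\cl D}(y)$, because $D$ is convex. Indeed, for convex $D$ with nonempty interior we have $\intr \cl D=\intr D$, so $Y\setminus D$ and $Y\setminus \cl D$ have the same closure. If $\intr D=\emptyset$, then $\intr\cl D=\emptyset$ as well (a convex set and its closure have the same nonempty-ness of interior in a Banach space, via the standard argument for convex sets). In that case both distances are $0$, since $\cl D$ has empty interior and $y$ lies in the closure of the complement. So $\Delta_D(y)=-d_{Y\setminus \cl D}(y)$.

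Also, replacing $D$ by $\cl D$ does not change $\psi$, since the infimum of the continuous linear functional $u\mapsto\langle y^*,y-u\rangle$ over $D$ equals that over $\cl D$. We may therefore assume $D$ is closed.

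\textbf{Case 2a: $D=Y$ (after closure).} Then $\inf_{u\in Y}\langle y^*,y-u\rangle=-\infty$ for every $y^*\neq0$, so $\psi\equiv-\infty=\Delta_Y$, consistent with the convention. If the original $D\neq Y$ but $\cl D=Y$, then $\intr D=\intr\cl D=Y$ by the convex interior identity, so $D=Y$ anyway.

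\textbf{Case 2b: $D\neq Y$ closed.} Lemma \ref{QWAD} gives $d_{Y\setminus D}(y)=\inf_{y^*}\sup_{u\in D}\langle y^*,y-u\rangle$. Multiplying by $-1$ yields
\[-d_{Y\setminus D}(y)=\sup_{y^*\in S_{Y^*}}\bigl(-\sup_{u\in D}\langle y^*,y-u\rangle\bigr).\]
We then substitute $y^*\mapsto -y^*$, which is a bijection of $S_{Y^*}$. This gives
\[-\sup_{u\in D}\langle -y^*,y-u\rangle=\inf_{u\in D}\langle y^*,y-u\rangle,\]
so the expression becomes exactly $\psi(y)$.

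The main obstacle is the reduction from $D$ to $\cl D$ in Case 2 when $D$ is not closed, that is, justifying $d_{Y\setminus D}=d_{Y\setminus\cl D}$ on $\cl D$. I would handle it with the convexity fact $\intr\cl D=\intr D$ when $\intr D\neq\emptyset$, and otherwise observe that both sides vanish. Everything else is a direct invocation of Lemmas \ref{QWED} and \ref{QWAD} together with the symmetry of the unit sphere.
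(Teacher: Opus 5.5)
Your Case 1 and your treatment of closed $D$ (Lemma \ref{QWAD} plus the symmetry $-S_{Y^*}=S_{Y^*}$) match the paper's argument. The paper gets the reduction $\Delta_D=\Delta_{\cl D}$ by citing Lemma 2.3 of \cite{Han2022} rather than proving it.

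The genuine gap is in your justification of that reduction. You claim that for convex $D$ in a Banach space, $\intr D=\emptyset$ forces $\intr\cl D=\emptyset$. This is false once $Y$ is infinite-dimensional. Any dense proper subspace, e.g.\ $c_{00}\subset\ell^2$, is convex with empty interior but has closure $Y$. The same error breaks your Case 2a, since the identity $\intr\cl D=\intr D$ requires $\intr D\neq\emptyset$.

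This cannot be repaired, because the formula itself fails there. Take $Y=\ell^2$ and $D=c_{00}\cap B_{\ell^2}$. Then $\cl D=B_{\ell^2}$, and $Y\setminus D\supseteq Y\setminus c_{00}$ is dense, so $\Delta_D(0)=d_D(0)-d_{Y\setminus D}(0)=0$. On the other hand,
\[
\sup_{y^*\in S_{Y^*}}\inf_{u\in D}\langle y^*,0-u\rangle=\sup_{\|y^*\|=1}\Bigl(-\sup_{u\in B_{\ell^2}}\langle y^*,u\rangle\Bigr)=-1=\Delta_{\cl D}(0).
\]
For $D=c_{00}$ itself, $\Delta_D\equiv 0$ while the right-hand side is $\equiv-\infty$.

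In fact the formula holds for all $y$ exactly when $\intr D=\intr\cl D$, which is what makes $d_{Y\setminus D}=d_{Y\setminus\cl D}$. At any $y\in\intr\cl D\setminus\intr D$, the left side is $0$ and the right side is negative or $-\infty$. The condition $\intr D=\intr\cl D$ is automatic if $D$ is closed, if $\intr D\neq\emptyset$, or if $\dim Y<\infty$. In the last case a convex set with empty interior lies in a proper closed affine subspace, and so does its closure.

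Under such a hypothesis your proof is complete and coincides with the paper's. Without it, neither argument closes: the identity $\Delta_{\cl D}=\Delta_D$ that the paper imports fails for the example above, so the statement needs an extra hypothesis.
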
 
\begin{proof}  It follows from      Lemma 2.3 of \cite{Han2022} that ${\Delta _{{\rm{cl}}D}}(y) = {\Delta _D}(y)$.  There are two cases to be considered.

Case 1. $y \notin {\rm{cl}}D$. Then we have ${d_D}\left( y \right) > 0$ and ${\Delta _D}\left( y \right) = {d_D}\left( y \right)$. This together with  Lemma  \ref{QWED}  implies that 
$${\Delta _D}\left( y \right) = {d_D}\left( y \right) = \mathop {\sup }\limits_{{y^*} \in {S_{{Y^*}}}} \mathop {\inf }\limits_{u \in D} \left\langle {{y^*},y - u} \right\rangle .$$	

Case 2. $y \in {\rm{cl}}D$.  This yields    that ${\Delta _{{\rm{cl}}D}}(y) =  - {d_{Y\backslash {\rm{cl}}D}}(y)$. By Lemma  \ref{QWAD}, we have 
$${d_{Y\backslash {\rm{cl}}D}}(y) = \mathop {\inf }\limits_{{y^*} \in {S_{{Y^*}}}} \mathop {\sup }\limits_{u \in {\rm{cl}}D} \left\langle {{y^*},y - u} \right\rangle = \mathop {\inf }\limits_{{y^*} \in {S_{{Y^*}}}} \mathop {\sup }\limits_{u \in D} \left\langle {{y^*},y - u} \right\rangle .$$
Combining this with  $ - {S_{{Y^*}}} = {S_{{Y^*}}}$, we get
\begin{eqnarray*} {\Delta _D}(y) &=& {\Delta _{{\rm{cl}}D}}(y) =  - {d_{Y\backslash {\rm{cl}}D}}(y) = - \mathop {\inf }\limits_{{y^*} \in {S_{{Y^*}}}} \mathop {\sup }\limits_{u \in D} \left\langle {{y^*},y - u} \right\rangle \\
	&=& \mathop {\sup }\limits_{{y^*} \in {S_{{Y^*}}}} \mathop {\inf }\limits_{u \in D} \left\langle { - {y^*},y - u} \right\rangle = \mathop {\sup }\limits_{{y^*} \in  - {S_{{Y^*}}}} \mathop {\inf }\limits_{u \in D} \left\langle {{y^*},y - u} \right\rangle  \\
	&=& \mathop {\sup }\limits_{{y^*} \in {S_{{Y^*}}}} \mathop {\inf }\limits_{u \in D} \left\langle {{y^*},y - u} \right\rangle .		
\end{eqnarray*}
 \end{proof}

\begin{remark}  In view of Proposition \ref{wqaz},       the condition \(\operatorname{ri}(A) \neq \emptyset\) in Proposition 1 of \cite{LiuNgYang2009} is redundant,  the conclusion remains valid even if this condition is removed.
   \end{remark}

By Proposition \ref{wqaz}, the following corollary is readily obtained.
 \begin{corollary}\label{cor:Delta}
 	For a nonempty and  convex cone $P \subseteq Y$,  we have 
 $$
 		\Delta_P(y) = \sup_{y^* \in S(P^+)} \langle -y^*, y \rangle, \quad \forall y \in Y.
$$
 \end{corollary}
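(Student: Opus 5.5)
The plan is to specialize Proposition \ref{wqaz} to $D=P$ and then evaluate the inner infimum explicitly using the cone structure. Since $P$ is a nonempty convex set, Proposition \ref{wqaz} gives
\[
\Delta_P(y)=\sup_{y^*\in S_{Y^*}}\Bigl(\langle y^*,y\rangle-\sup_{u\in P}\langle y^*,u\rangle\Bigr)
=\sup_{y^*\in S_{Y^*}}\bigl(\langle y^*,y\rangle-\sigma_P(y^*)\bigr).
\]
Because $P$ is a cone, $\sigma_P(y^*)$ is either $0$ or $+\infty$. If $\langle y^*,u\rangle\le 0$ for all $u\in P$, then $\sigma_P(y^*)=0$; here we use that $0\in \cl P$, or that $tu\in P$ for $t\downarrow 0$, so the supremum is exactly $0$. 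Otherwise some $u\in P$ has $\langle y^*,u\rangle>0$, and scaling $tu$ with $t\to\infty$ gives $+\infty$. Hence $\sigma_P(y^*)=0$ exactly when $y^*\in P^\ominus=-P^+$, and $\sigma_P(y^*)=+\infty$ otherwise.

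It follows that the terms with $y^*\notin -P^+$ contribute $-\infty$ and can be dropped from the supremum, provided at least one admissible $y^*$ remains. This leaves
\[
\Delta_P(y)=\sup_{y^*\in S_{Y^*}\cap(-P^+)}\langle y^*,y\rangle=\sup_{z^*\in S(P^+)}\langle -z^*,y\rangle,
\]
after the substitution $z^*=-y^*$, which maps $S_{Y^*}\cap(-P^+)$ bijectively onto $S(P^+)$.

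The only delicate point is this degenerate case. If $S(P^+)=\emptyset$, that is $P^+=\{0\}$, the right-hand side is $-\infty$. Here $\cl P$ is a closed convex cone with $(\cl P)^+=\{0\}$, so by the bipolar theorem $\cl P=Y$. Then $\Delta_P=\Delta_{\cl P}$ by Lemma 2.3 of \cite{Han2022}, and $\Delta_{\cl P}=\Delta_Y\equiv-\infty$ by the stated convention, so the formula still holds, reading $\sup\emptyset=-\infty$. I expect this edge case to be the only step needing care; otherwise the argument is a direct computation from Proposition \ref{wqaz}.
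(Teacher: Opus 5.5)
Your generic case is exactly the paper's derivation from Proposition~\ref{wqaz}: the support function of a cone is $0$ on $-P^+$ and $+\infty$ elsewhere, then substitute $z^*=-y^*$. The step that fails is your treatment of the degenerate case $P^+=\{0\}$. From $\cl P=Y$ you conclude $\Delta_P=\Delta_{\cl P}=\Delta_Y\equiv-\infty$. Whenever $P\neq Y$ this contradicts Proposition~\ref{XSQ}(i), which says $\Delta_P$ is real-valued. In finite dimensions the case is harmless, because $\cl P=Y$ forces $P=Y$ for convex $P$. But every infinite-dimensional Banach space contains proper dense convex cones, e.g.\ $c_{00}\subset\ell^2$, or $\{y:f(y)\ge 0\}$ with $f$ a discontinuous linear functional. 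Such a $P$ has empty interior (otherwise $\intr P=\intr\cl P=Y$). Hence $P$ and $Y\setminus P$ are both dense and $\Delta_P\equiv 0$, whereas $S(P^+)=\emptyset$ makes the right-hand side $-\infty$. So no argument can close this case: the statement is false there. Taken at face value, Proposition~\ref{wqaz} would have ``covered'' this case with no separate argument, since every $y^*\in S_{Y^*}$ contributes $-\infty$; that only shows Proposition~\ref{wqaz} fails here too.

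The underlying problem is the identity $\Delta_{\cl D}=\Delta_D$, which holds precisely when $\intr D=\intr\cl D$. It can fail even when $\cl D\neq Y$, so your generic case silently inherits the same hidden hypothesis through Proposition~\ref{wqaz}. For instance, take $P=\{y\in c_{00}:y_1\ge 0\}\subset\ell^2$. Then $\cl P=\{y:y_1\ge 0\}$ and $S(P^+)=\{e_1\}$, so the right-hand side is $-y_1$. On the other hand $d_{Y\setminus P}\equiv 0$ gives $\Delta_P(y)=\max\{-y_1,0\}$, which differs from $-y_1$ whenever $y_1>0$. The repair is to add a hypothesis ensuring $\intr P=\intr\cl P$: $P$ closed, $\intr P\neq\emptyset$, or $\dim Y<\infty$. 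Under any of these, $\Delta_P=\Delta_{\cl P}$ is legitimate and Proposition~\ref{wqaz} holds for $D=P$, so your computation goes through verbatim. In the degenerate case $\cl P=Y$ then forces $P=Y$, where both sides equal $-\infty$ by convention. This still covers the paper's application to $P=C$, a closed cone with nonempty interior.
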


\begin{remark}   As a consequence of Corollary \ref{cor:Delta}, the hypothesis $\intr K \neq \emptyset$ in Corollary 2 of \cite{LiuNgYang2009} turns out to be redundant, the conclusion remains valid even without this condition.
  \end{remark}

\subsection{Problem-specific notation}

Returning to the vector optimization problem \eqref{eq:VP}, define the lifted cone
\[
C_X := F^{-1}(C) = \{x \in X : F(x) \in C\}.
\]
One readily verifies that if \( F(X) \cap \operatorname{int} C \neq \emptyset \), then \( \operatorname{int} C_X = F^{-1}(\operatorname{int} C) \neq \emptyset \).  
To avoid the triviality we always assume that $F(X) \cap \intr C \neq \emptyset$.

The adjoint operator $F^*: Y^* \to X^*$ is defined by $\langle F^*(y^*), x \rangle = \langle y^*, F(x) \rangle$, and satisfies the key relation (see \cite{Zalinescu1978}):
$$
	F^*(C^+) = (F^{-1}(C))^+ = C_X^+.
$$

The set
$$
	\widehat{A} := X \setminus (A + \intr C_X)
$$
will play a central role. Note that $E_w  = A \cap \widehat{A}$ (see \cite[Theorem 1]{LiuNgYang2009}).
It is clear that  $E_w $ is closed.

Let $K := \overline{\operatorname{co}}^{w^*}\big(S(C^+)\big)$,   where $S(C^+)=\{y^*\in C^+:\|y^*\|=1\}$  and ${\overline{\operatorname{co}}}^{w^*}$ denotes the closed convex hull in the weak$^*$ topology.
   In this paper, the set $K$ is of central importance, as many of the results are formulated in terms of $K$.

\begin{proposition}   \label{RDCxl}   
	Let $K = \overline{\operatorname{co}}^{w^*}\big(S(C^+)\big)$. Then $K$ is a nonempty weak$^*$-compact convex set  and  $0 \notin K$ 
   \end{proposition}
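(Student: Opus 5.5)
Nonemptiness and convexity are immediate from the definition. Since $\intr C\neq\emptyset$, we have $C\neq Y$, so $C^+\neq\{0\}$. Indeed, pick $y\notin C$ and separate $y$ from the closed convex cone $C$ by Hahn--Banach; this gives $y^*\in C^+\setminus\{0\}$. Normalizing yields $S(C^+)\neq\emptyset$. The weak$^*$-closed convex hull of a nonempty set is a nonempty convex set by construction.

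For weak$^*$-compactness, I will note that $S(C^+)\subseteq B_{Y^*}$, and $B_{Y^*}$ is weak$^*$-closed and convex. Hence $K\subseteq B_{Y^*}$. By the Banach--Alaoglu theorem, $B_{Y^*}$ is weak$^*$-compact, and $K$ is a weak$^*$-closed subset of it, so $K$ is weak$^*$-compact.

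The main step is $0\notin K$, and here I use $\intr C\neq\emptyset$ quantitatively. Fix $e\in\intr C$ and $\delta>0$ with $e+\delta B_Y\subseteq C$. For any $y^*\in S(C^+)$ and any $u\in B_Y$ we get $\langle y^*,e+\delta u\rangle\ge 0$. Taking the infimum over $u$ gives $\langle y^*,e\rangle\ge\delta\|y^*\|=\delta$. Thus $S(C^+)$ lies in the set
\[
H:=\{y^*\in Y^*:\langle y^*,e\rangle\ge\delta\}.
\]
Since $e\in Y$, the map $y^*\mapsto\langle y^*,e\rangle$ is weak$^*$-continuous. So $H$ is a weak$^*$-closed convex half-space, and therefore $K\subseteq H$. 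Because $\langle 0,e\rangle=0<\delta$, we conclude $0\notin K$.

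The only real obstacle is making sure the separating half-space is weak$^*$-closed. That is why I use the evaluation at $e\in Y$ rather than a general functional on $Y^*$.
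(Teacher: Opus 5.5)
There is one false step: the opening claim ``since $\intr C\neq\emptyset$, we have $C\neq Y$.'' This implication does not hold, because $C=Y$ is itself a closed convex cone with nonempty interior. In that case $C^+=\{0\}$, $S(C^+)=\emptyset$ and $K=\emptyset$, so the proposition is false under the hypotheses as literally stated in Section 2. Nonemptiness therefore cannot be derived from $\intr C\neq\emptyset$. It rests on the tacit standing assumption $C\neq Y$, which the paper also uses without comment: its proof simply asserts that $S(C^+)$ is nonempty, and $\Delta_C$ is real-valued only when $C\neq Y$ (cf.\ Proposition~\ref{XSQ}). Replace your justification by an explicit appeal to $C\neq Y$. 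From there your separation argument is correct: strictly separating $y\notin C$ from the closed convex cone $C$ gives $y^*$ with $\langle y^*,y\rangle<\inf_{c\in C}\langle y^*,c\rangle$, and the cone property forces this infimum to be $0$, so $y^*\in C^+\setminus\{0\}$.

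The rest of your proof is sound. Your proof of $0\notin K$ is the paper's argument, with the same estimate $\langle y^*,e\rangle\ge\delta$ on $S(C^+)$. You transfer it to $K$ via the weak$^*$-closed half-space $H$ rather than through convex combinations and a net limit, which is slightly cleaner. For compactness, your route ($K\subseteq B_{Y^*}$ plus Banach--Alaoglu) is more careful than the paper's, which claims that $S(C^+)$ itself is weak$^*$-compact. That claim fails in general in infinite dimensions. For the self-dual Lorentz cone $C=\{(s,z)\in\mathbb{R}\times\ell^2: s\ge\|z\|\}$, with $e_n$ the unit vectors of $\ell^2$, the points $2^{-1/2}(1,e_n)\in S(C^+)$ converge weak$^*$ to $2^{-1/2}(1,0)\notin S(C^+)$. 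The conclusion about $K$ nevertheless holds, precisely by your argument.
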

\begin{proof}  
Clearly, $K \subseteq C^+$ and  $K \subseteq \{y^*\in Y^*:\|y^*\|  \leq 1\}$.
Since $S(C^+)=\{y^*\in C^+:\|y^*\|=1\}$ is nonempty weak$^*$-compact, we obtain that $K$ is a nonempty weak$^*$-compact convex set.  

Fix \(y_0 \in \operatorname{int} C\). Then  there exists \(\varepsilon > 0\) such that
\begin{equation}\label{Edfty1}
	y_0 + \varepsilon B_Y \subseteq C,
\end{equation}
where \(B_Y \) is the closed unit ball in \(Y\).
Take an arbitrary \(y^* \in S(C^+)\), i.e., \(y^* \in C^+\) and \(\|y^*\| = 1\). By the definition of \(C^+\), we have \(\langle y^*, c \rangle \ge 0\) for all \(c \in C\).
In particular, for every \(u \in B_Y\), by~\eqref{Edfty1} we have \(y_0 + \varepsilon u \in C\). Hence
\[
\langle y^*, y_0 + \varepsilon u \rangle \ge 0, \quad \forall u \in B_Y.
\]
This is equivalent to
\[
\langle y^*, y_0 \rangle \ge -\varepsilon \langle y^*, u \rangle \quad \forall u \in B_Y.
\]
Taking the supremum over the right-hand side, we obtain
\begin{equation}\label{Edfty2}
	\langle y^*, y_0 \rangle \ge \sup_{u \in B_Y} \bigl(-\varepsilon \langle y^*, u \rangle\bigr)
	= \varepsilon \sup_{u \in B_Y} \bigl(-\langle y^*, u \rangle\bigr).
\end{equation}
Note that
\[
\sup_{u \in B_Y} \bigl(-\langle y^*, u \rangle\bigr) = \sup_{u \in B_Y} \langle y^*, -u \rangle = \sup_{v \in B_Y} \langle y^*, v \rangle = \|y^*\| = 1,
\]
which together with~\eqref{Edfty2} yields
\[
\langle y^*, y_0 \rangle \ge \varepsilon \cdot 1 = \varepsilon > 0.
\]
Since \(y^* \in S(C^+)\) is arbitrary, we conclude that
\begin{equation}\label{Edfty3}
	\inf_{y^* \in S(C^+)} \langle y^*, y_0 \rangle \ge \varepsilon > 0.
\end{equation}

Now take any \(v \in \operatorname{co}(S(C^+))\). Then there exist finitely many scalars \(\lambda_i \ge 0\) with \(\sum_{i=1}^n \lambda_i = 1\) and \(y_i^* \in S(C^+)\) such that
\[
v = \sum_{i=1}^n \lambda_i y_i^*.
\]
For each \(i\), by~\eqref{Edfty3} we have \(\langle y_i^*, y_0 \rangle \ge \varepsilon\). Therefore,
\begin{equation}\label{Edfty4}
	\langle v, y_0 \rangle = \sum_{i=1}^n \lambda_i \langle y_i^*, y_0 \rangle \ge \sum_{i=1}^n \lambda_i \varepsilon = \varepsilon.
\end{equation}

If \(w = 0 \in K\), there exists a net \(\{v_\alpha\} \subseteq \operatorname{co}(S(C^+))\) such that \(v_\alpha \xrightarrow{w^*} w\). Then \(\langle v_\alpha, y_0 \rangle \to \langle w, y_0 \rangle\).

By~\eqref{Edfty4}, for every \(\alpha\) we have \(\langle v_\alpha, y_0 \rangle \ge \varepsilon\). Hence,
\[
0 = \langle 0, y_0 \rangle = \langle w, y_0 \rangle = \lim_\alpha \langle v_\alpha, y_0 \rangle \ge \varepsilon > 0,
\]
which is a contradiction. Therefore, \(0 \notin K\).
   \end{proof}  

\section{Properties and dual representation of the merit function  }\label{sec:merit}

 The aim of this section is to establish the fundamental analytic properties of 
 $\theta$—its concavity, Lipschitz continuity, and dual representation—which will be indispensable for the subsequent study of directional derivatives and error bounds.

\begin{definition}\label{def:theta}  \cite{LiuNgYang2009}
	The merit function $\theta : X \to \mathbb{R}\cup \left\{ +\infty \right\}$ is defined by
	 	\begin{eqnarray*}  	\theta(x) &=& -\inf_{a \in A} \Delta_C(F(x) - F(a)) = \sup_{a \in A} (-\Delta_C(F(x) - F(a))) \\
		&=&   \sup_{a \in A} \inf_{y^* \in S(C^+)} \langle y^*, F(x) - F(a) \rangle, \quad \forall x \in X.
	\end{eqnarray*}

\end{definition}

\begin{lemma} \label{YL:thetaprop} \cite[Theorem 3]{LiuNgYang2009}
	The function $\theta$ satisfies:
	\begin{enumerate}[leftmargin=*,label=(\roman*)]
		\item For any $x \in A$, $\theta(x) \geq 0$.
		\item $\theta$ is $C_X$-monotone, i.e.,  for any $x_1, x_2 \in X$ with $x_1 \le_{C_X} x_2$, we have 	$	  \theta(x_1)   \leq	\theta(x_2) .		$
		\item $\theta(x) = 0$ if and only if $x \in E_w$.
		\item There exists a constant $r > 0$   such that
		\[
		r \cdot d_{\widehat{A}}(x) \leq \theta(x) \leq \|F\| \cdot d_{\widehat{A}}(x), \quad \forall x \in A.
		\]
	\end{enumerate}
\end{lemma}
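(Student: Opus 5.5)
Since the lemma is quoted from \cite[Theorem 3]{LiuNgYang2009}, I would re-derive the four items directly from Definition \ref{def:theta} and Corollary \ref{cor:Delta}, using $-\Delta_C(y)=\inf_{y^*\in S(C^+)}\langle y^*,y\rangle$.

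\textbf{Item (i).} For $x\in A$, take $a=x$ in the supremum. This gives $\theta(x)\ge -\Delta_C(0)=0$.

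\textbf{Item (ii).} If $x_2-x_1\in C_X$, then $F(x_2)-F(x_1)\in C$. By Proposition \ref{XSQ}(ix), for every $a\in A$,
\[
\Delta_C(F(x_2)-F(a))\le \Delta_C(F(x_1)-F(a)).
\]
Negating and taking the supremum over $a\in A$ gives $\theta(x_1)\le\theta(x_2)$.

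\textbf{Item (iii).} For $x\in A$ we have $\theta(x)>0$ iff there is $a\in A$ with $\Delta_C(F(x)-F(a))<0$. By Proposition \ref{XSQ}(iii) this means $F(x)-F(a)\in\intr C$, i.e.\ $x\in A+\intr C_X$ (using $\intr C_X=F^{-1}(\intr C)$). Combined with (i), for $x\in A$ we get $\theta(x)=0$ iff $x\in A\cap\widehat A=E_w$.

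Outside $A$ the claim can fail for the zero level set, but the statement implicitly concerns feasible points. I would note that $E_w\subseteq A$, and that the statement is applied on $A$.

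\textbf{Item (iv), upper bound.} Fix $x\in A$ and $z\in\widehat A$. Then $z\notin A+\intr C_X$, so $F(z)-F(a)\notin\intr C$ for every $a\in A$, hence $\Delta_C(F(z)-F(a))\ge0$. Since $\Delta_C$ is $1$-Lipschitz,
\[
-\Delta_C(F(x)-F(a))\le -\Delta_C(F(z)-F(a))+\|F\|\|x-z\|\le\|F\|\|x-z\|.
\]
Taking the supremum over $a$ and the infimum over $z$ gives $\theta(x)\le\|F\|\,d_{\widehat A}(x)$.

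\textbf{Item (iv), lower bound (the main obstacle).} Fix $e\in\intr C_X$ with $\|e\|=1$, and choose $\varepsilon>0$ with $F(e)+\varepsilon B_Y\subseteq C$. The argument of Proposition \ref{RDCxl} then gives $\langle y^*,F(e)\rangle\ge\varepsilon$ for all $y^*\in S(C^+)$.

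For $x\in A$ with $\theta(x)=t$, I would show that $x-se\in\widehat A$ for every $s>t/\varepsilon$. Suppose instead that $x-se=a+c$ with $a\in A$ and $c\in\intr C_X$. Then
\[
\langle y^*,F(x)-F(a)\rangle=\langle y^*,F(c)\rangle+s\langle y^*,F(e)\rangle\ge s\varepsilon \quad\text{for all } y^*\in S(C^+).
\]
Hence $-\Delta_C(F(x)-F(a))\ge s\varepsilon>t$, contradicting $\theta(x)=t$.

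It follows that $d_{\widehat A}(x)\le t/\varepsilon$, so item (iv) holds with $r=\varepsilon$. The same argument also shows $\theta(x)<\infty$ is not needed as an assumption: if $\theta(x)=+\infty$ the inequality is trivial.
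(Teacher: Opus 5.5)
Your proof is correct. The paper gives no argument of its own here; the lemma is quoted from \cite[Theorem 3]{LiuNgYang2009}. So your proposal is an independent, self-contained verification, and its ingredients are ones the paper uses elsewhere:
\begin{itemize}
\item Item (iii) is the argument of Theorem~\ref{TxEQ}(i), combined with (i) and $E_w=A\cap\widehat A$.
\item The upper bound in (iv) is the Lipschitz estimate of Theorem~\ref{YL:thetaprop2}(ii) combined with $\theta\le 0$ on $\widehat A$, and it holds on all of $X$.
\item The lower bound reuses the computation of Proposition~\ref{RDCxl}. The only use of $x\in A$ there is $\theta(x)\ge 0$. This makes $s>0$, so the estimate $s\langle y^*,F(e)\rangle\ge s\varepsilon$ points in the right direction.
\end{itemize}

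What your primal route buys is an explicit constant depending only on $F$ and $C$. Optimizing over $e$ gives $r=\sup\{d_{Y\setminus C}(F(e)) : e\in\intr C_X,\ \|e\|=1\}$. A dual proof through Theorem~\ref{YDOBS} would also work. For each $v\in K$, the half-space $\{z:\langle F^*v,z\rangle<\inf_{a\in A}\langle F^*v,a\rangle\}$ is contained in $\widehat A$. This gives $r=\min_{v\in K}\|F^*v\|$, which by Sion's theorem is the same number. So nothing is lost by avoiding duality.

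Your caveat on (iii) is also right and worth keeping: over all of $X$ the equivalence is false. In the paper's closing counterexample, $\theta(x)=\min\{x_1,x_2\}$ on $C=\mathbb{R}^2_+$, so $\theta$ vanishes at $(2,0)\notin A=[0,1]^2$. Only the implication $x\in E_w\Rightarrow\theta(x)=0$ holds globally, because $E_w\subseteq A$. The equivalence must be read for $x\in A$, which is how the paper actually uses it.
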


\begin{proposition}\label{AQWE}
	If there exists    $y_0^*\in C^+\setminus\{0\}$ such that $\inf_{a\in A}\langle y_0^*,F(a)\rangle > -\infty$, then $\theta(x)<+\infty$ for all $x\in X$.
\end{proposition}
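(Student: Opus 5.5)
Normalize $y_0^*$ so that $\|y_0^*\|=1$; this is harmless because the hypothesis is invariant under positive scaling, so $y_0^*\in S(C^+)$ and $m:=\inf_{a\in A}\langle y_0^*,F(a)\rangle>-\infty$. The plan is to bound the inner infimum in Definition~\ref{def:theta} from above by the single functional $y_0^*$, uniformly in $a\in A$.

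For fixed $x\in X$ and any $a\in A$, Definition~\ref{def:theta} together with Corollary~\ref{cor:Delta} (applied to the cone $C$) gives
\[
-\Delta_C(F(x)-F(a))=\inf_{y^*\in S(C^+)}\langle y^*,F(x)-F(a)\rangle\le \langle y_0^*,F(x)\rangle-\langle y_0^*,F(a)\rangle .
\]
This holds simply because an infimum is dominated by its value at the particular choice $y^*=y_0^*$.

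Next take the supremum over $a\in A$. This yields
\[
\theta(x)\le \langle y_0^*,F(x)\rangle-\inf_{a\in A}\langle y_0^*,F(a)\rangle=\langle y_0^*,F(x)\rangle-m\le \|F\|\,\|x\|-m<+\infty .
\]
Since $x\in X$ was arbitrary, $\theta$ is finite everywhere. For completeness, one should also note that $\theta(x)>-\infty$, since $\theta$ is a supremum over the nonempty set $A$ of real numbers.

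There is no real obstacle in this argument. The only points needing care are the normalization of $y_0^*$ and the use of the dual formula for $\Delta_C$ from Corollary~\ref{cor:Delta}, which has already been established.
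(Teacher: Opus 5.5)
Your proposal is correct and follows essentially the same route as the paper: normalize $y_0^*$ to a point of $S(C^+)$, bound the inner infimum by its value at that single functional, and take the supremum over $a\in A$ to get $\theta(x)\le\langle y_0^*,F(x)\rangle-m<+\infty$. Your extra bound by $\|F\|\,\|x\|-m$ and the remark that $\theta(x)>-\infty$ are harmless additions.
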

\begin{proof}
	Suppose  that  there exists $y_0^*\in C^+\setminus\{0\}$ such that
	\[
	m:=\inf_{a\in A}\langle y_0^*,F(a)\rangle > -\infty.
	\]
	Set $\tilde y^*=y_0^*/\|y_0^*\|$.  Then $\tilde y^*\in S(C^+)$ and $\inf_{a\in A}\langle\tilde y^*,F(a)\rangle = m/\|y_0^*\| > -\infty$.   For any $x\in X$, we have
	\[
	\begin{aligned}
		\theta(x) &= \sup_{a \in A} \inf_{y \in S(C^+)} \langle y^*, F(x) - F(a) \rangle  
		\leq \sup_{a \in A} \langle \tilde y^*, F(x) - F(a) \rangle \\
		&= \langle \tilde y^*, F(x) \rangle - \inf_{a \in A} \langle \tilde y^*, F(a) \rangle  
		= \langle \tilde y^*, F(x) \rangle - m/\|y_0^*\| < + \infty.
	\end{aligned}
	\]
\end{proof}

\begin{definition}  	\cite{Luc, DF2024} 
	A nonempty set $D \subseteq Y$ is called:
	\begin{enumerate}
		\item[(i)] $C$-bounded, if for every neighborhood $U$ of $0 \in Y$, there exists $t > 0$ such that $D \subseteq tU + C$;
		\item[(ii)] $C$-sequentially compact, if for any sequence $\{y_n\}  \subseteq D$, there exist a sequence $\{c_n\}  \subseteq C$, $y_0 \in D$, and a subsequence $\{y_{n_k} - c_{n_k}\} $ of $\{y_n - c_n\} $ such that $y_{n_k} - c_{n_k} \to y_0$.
	\end{enumerate}
\end{definition}

\begin{corollary} \label{ZRUQ}
	If $F(A)$ is $C$-bounded, then $\theta(x)<+\infty$ for all $x\in X$.
\end{corollary}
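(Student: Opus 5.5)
The plan is to reduce the statement to Proposition \ref{AQWE}. That proposition only needs some $y_0^*\in C^+\setminus\{0\}$ for which $\inf_{a\in A}\langle y_0^*,F(a)\rangle>-\infty$. We will show that $C$-boundedness of $F(A)$ makes this infimum finite for \emph{every} $y^*\in C^+$. Since $\intr C\neq\emptyset$ and $C\neq Y$ (otherwise $\theta$ is trivial), the cone $C^+$ contains nonzero elements, so any of them can serve as $y_0^*$.

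To get the bound, take $U:=B(0,1)$, the open unit ball of $Y$, which is a neighborhood of $0$. By the definition of $C$-boundedness there is $t>0$ with $F(A)\subseteq tU+C$. Hence for each $a\in A$ we can write $F(a)=tu_a+c_a$ with $\|u_a\|<1$ and $c_a\in C$. For $y^*\in C^+\setminus\{0\}$ this gives
\[
\langle y^*,F(a)\rangle=t\langle y^*,u_a\rangle+\langle y^*,c_a\rangle\ge -t\|y^*\|+0 ,
\]
using $\langle y^*,c_a\rangle\ge 0$. Therefore $\inf_{a\in A}\langle y^*,F(a)\rangle\ge -t\|y^*\|>-\infty$, and Proposition \ref{AQWE} yields $\theta(x)<+\infty$ for all $x\in X$.

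The only point that needs checking is that $C^+\setminus\{0\}$ is nonempty. Because $C$ is a closed convex cone with $C\neq Y$, a separation argument (Hahn--Banach, separating any point outside $C$ from $C$) produces a nonzero functional that is nonnegative on $C$, so this is routine. Alternatively, the nonemptiness of $S(C^+)$ was already used in Proposition \ref{RDCxl}.
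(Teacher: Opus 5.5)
Your argument is correct and is essentially the paper's proof. Both use the open unit ball in the definition of $C$-boundedness to get $\langle y^*,F(a)\rangle\ge -t\|y^*\|$ for $y^*\in C^+$, and then invoke Proposition~\ref{AQWE}; the paper normalizes to $y^*\in S(C^+)$ and tacitly assumes $S(C^+)\neq\emptyset$, which you justify explicitly. One small correction to your parenthetical: if $C=Y$ then $S(C^+)=\emptyset$ and $\theta\equiv+\infty$ (by the convention $\Delta_Y\equiv-\infty$), so the corollary fails in that case, and $C\neq Y$ is a genuine implicit standing assumption rather than a trivial case.
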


\begin{proof}
	Take any $y^*\in S(C^+)$. According to the definition of $C$-boundedness, for    the open unit ball $U = \{y\in Y : \|y\| < 1\}$,  there exists $t_0 > 0$ such that
$	F(A) \subseteq t_0 U + C .$
	This means that for each $a\in A$, there exist $u_a \in U$ and $c_a \in C$ satisfying
$	F(a) = t_0 u_a + c_a .$
	Applying the functional $y^*$ to both sides yields
\begin{equation}\label{DzxcB1}
	\langle y^*, F(a)\rangle = t_0\langle y^*, u_a\rangle + \langle y^*, c_a\rangle .
 \end{equation}
	Since $y^*\in C^+$, we have $\langle y^*, c_a\rangle \ge 0$. Moreover, from $\|u_a\| < 1$ and $\|y^*\| = 1$, we get $|\langle y^*, u_a\rangle| \le \|y^*\|\cdot\|u_a\| < 1$, hence $\langle y^*, u_a\rangle > -1$. This together with  (\ref{DzxcB1})   implies that 
	\[
	\langle y^*, F(a)\rangle > -t_0 \quad \forall a\in A .
	\]
	Taking the infimum over $a$ gives
	\[
	\inf_{a\in A} \langle y^*, F(a)\rangle \ge -t_0 > -\infty .
	\]
	By Proposition \ref{AQWE}, $\theta(x)<+\infty$ for all $x\in X$.
\end{proof}

\begin{proposition}\label{XLJKD}
	If $F(A)$ is $C$-sequentially compact, then for any $x\in X$, there exists $a_0 \in A$ such that
	$$\theta(x) =  - {\Delta _C}\bigl( F(x) - F(a_0)\bigr).$$
\end{proposition}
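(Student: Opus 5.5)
Fix $x\in X$ and set $g(a):=-\Delta_C\bigl(F(x)-F(a)\bigr)$ for $a\in A$. Then $\theta(x)=\sup_{a\in A}g(a)$, and we need a maximizing point. The plan is to take a maximizing sequence, use $C$-sequential compactness to pass to a limit point of $F(A)$, and then use the monotonicity and continuity properties of $\Delta_C$ from Proposition \ref{XSQ}. The value $\theta(x)=+\infty$ is not excluded a priori, so that case also has to be handled.

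\emph{Step 1 (maximizing sequence).} Choose $a_n\in A$ with $g(a_n)\to\theta(x)$. If $\theta(x)=+\infty$, take $g(a_n)\ge n$. Put $y_n:=F(a_n)\in F(A)$. By $C$-sequential compactness there exist $c_n\in C$, a point $y_0\in F(A)$ and a subsequence with $y_{n_k}-c_{n_k}\to y_0$. Write $y_0=F(a_0)$ with $a_0\in A$.

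\emph{Step 2 (monotonicity).} We have
\[
\bigl(F(x)-y_{n_k}+c_{n_k}\bigr)-\bigl(F(x)-y_{n_k}\bigr)=c_{n_k}\in C .
\]
By Proposition \ref{XSQ}(ix), this gives $\Delta_C\bigl(F(x)-(y_{n_k}-c_{n_k})\bigr)\le\Delta_C\bigl(F(x)-y_{n_k}\bigr)$. Therefore
\[
-\Delta_C\bigl(F(x)-(y_{n_k}-c_{n_k})\bigr)\ \ge\ g(a_{n_k}).
\]

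\emph{Step 3 (limit via Lipschitz continuity).} By Proposition \ref{XSQ}(ii), $\Delta_C$ is $1$-Lipschitz. Since $F(x)-(y_{n_k}-c_{n_k})\to F(x)-F(a_0)$, the left-hand side of Step 2 converges to $g(a_0)$, which is finite. Letting $k\to\infty$ gives $g(a_0)\ge\lim_k g(a_{n_k})=\theta(x)$. In particular $\theta(x)$ cannot be $+\infty$, since otherwise the finite number $g(a_0)$ would dominate an unbounded sequence. Combined with the trivial inequality $g(a_0)\le\theta(x)$, this yields $\theta(x)=-\Delta_C\bigl(F(x)-F(a_0)\bigr)$.

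\emph{Main obstacle.} The only subtle step is Step 2. The points $y_{n_k}-c_{n_k}$ need not lie in $F(A)$, so we cannot compare them with the supremum directly. What makes the argument work is that subtracting an element of $C$ from $F(a)$ can only increase $-\Delta_C(F(x)-\cdot)$. The order convention must be checked against Proposition \ref{XSQ}(ix): $y_1-y_2\in C$ implies $\Delta_C(y_1)\le\Delta_C(y_2)$, which is exactly the inequality used in Step 2. With that settled, the rest is a routine limit argument.
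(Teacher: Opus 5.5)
Your proof is correct and follows the same idea as the paper. The paper sets $f(y)=\Delta_C(F(x)-y)$ and checks the same two properties you use: continuity, from the $1$-Lipschitz property, and $f(y_1)\le f(y_2)$ whenever $y_2-y_1\in C$, via Proposition \ref{XSQ}(ix). It then invokes Proposition 3.5 of \cite{Han2025} to obtain a minimizer of $f$ over the $C$-sequentially compact set $F(A)$. Your maximizing-sequence argument carries out that cited attainment step by hand, which makes the proof self-contained and also shows explicitly that $\theta(x)<+\infty$ under the hypothesis.
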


\begin{proof}
	Fix an arbitrary $x\in X$, and set $f(y) = \Delta_C(F(x) - y)$.  It is easy to see  that $-\theta(x) = \inf_{y \in F(A)} f(y)$.
	By the $1$-Lipschitz continuity of $\Delta_C$, $f: Y \to \mathbb{R}$ is continuous.
	
	Let $y_1, y_2 \in Y$ satisfy $y_2 - y_1 \in C$, then $F(x) - y_1 - (F(x) - y_2) \in C$. By Proposition \ref{XSQ}, we have
	$$f(y_1) = {\Delta _C}(F(x) - y_1) \le {\Delta _C}(F(x) - y_2) = f(y_2).$$
	According to Proposition 3.5 of \cite{Han2025}, there exists $y_0 \in F(A)$ such that
	$${\Delta _C}(F(x) - y_0) = f(y_0) = \inf_{y \in F(A)} f(y) = -\theta(x).$$
	Since $y_0 \in F(A)$, there exists $a_0 \in A$ with $y_0 = F(a_0)$. Hence,
	$$- {\Delta _C}\bigl( F(x) - F(a_0)\bigr) = - {\Delta _C}\bigl( F(x) - y_0\bigr) = \theta(x).$$
\end{proof}

\begin{proposition} \label{QQB} 
	For a nonempty set $Q \subseteq X$, we have:
	\begin{enumerate}
		\item[(i)] if $Q$ is $C_X$-bounded, then $F(Q)$ is $C$-bounded.
		\item[(ii)] if $F$ is surjective  and   $F(Q)$ is $C$-bounded, then  $Q$ is $C_X$-bounded.
	\end{enumerate}
\end{proposition}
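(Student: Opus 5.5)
For part (i), I would work directly from the definition of $C_X$-boundedness with the neighbourhood structure carried through the linear map $F$. Let $V$ be an arbitrary neighbourhood of $0\in Y$. Since $F$ is continuous and linear, $U:=F^{-1}(V)$ is a neighbourhood of $0\in X$. By $C_X$-boundedness of $Q$ there is $t>0$ with $Q\subseteq tU+C_X$. Applying $F$ and using linearity gives
\[
F(Q)\subseteq tF(U)+F(C_X)\subseteq tV+C,
\]
where $F(U)\subseteq V$ is trivial and $F(C_X)\subseteq C$ holds because $C_X=F^{-1}(C)$. Since $V$ was arbitrary, $F(Q)$ is $C$-bounded.

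For part (ii), the natural tool is the open mapping theorem. Surjectivity of the continuous linear map $F$ between Banach spaces makes $F$ open, and this supplies the reverse transfer of neighbourhoods. Let $U$ be a neighbourhood of $0\in X$ and pick $\delta>0$ with $\delta B_X\subseteq U$. By the open mapping theorem, $V:=F(\intr(\delta B_X))$ is an open neighbourhood of $0\in Y$. By $C$-boundedness of $F(Q)$ there is $t>0$ with $F(Q)\subseteq tV+C$.

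Now take $q\in Q$ and write $F(q)=tF(u)+c$ with $u\in\intr(\delta B_X)\subseteq U$ and $c\in C$. Then $F(q-tu)=c\in C$, so $q-tu\in F^{-1}(C)=C_X$. Hence $q\in tU+C_X$, which shows $Q\subseteq tU+C_X$, so $Q$ is $C_X$-bounded.

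The only genuinely nontrivial step is the appeal to the open mapping theorem in (ii): it is what guarantees that $F$ maps neighbourhoods of $0$ in $X$ onto neighbourhoods of $0$ in $Y$. The key observation there is that the remainder $q-tu$ is sent by $F$ into $C$, so it lies in $C_X$ even though $c$ itself need not be the image of any particular chosen point. Everything else is routine bookkeeping with linearity.
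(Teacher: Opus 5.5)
Your proof is correct and follows essentially the same route as the paper: in (i) you pull back $V$ to $U=F^{-1}(V)$ and push the inclusion $Q\subseteq tU+C_X$ forward by linearity, and in (ii) you use the Banach--Schauder open mapping theorem to make $F(U)$ a neighbourhood of $0$, then observe that $F(q-tu)=c\in C$ forces $q-tu\in C_X$. Your reduction to the open ball $\intr(\delta B_X)\subseteq U$ handles arbitrary, not necessarily open, neighbourhoods a little more carefully than the paper, which simply takes $U$ open.
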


\begin{proof}
	(i). Take any neighborhood $V$ of $0$ in $Y$. By continuity of $F$, $U := F^{-1}(V)$ is a neighborhood of $0$ in $X$. By the $C_X$-boundedness of $Q$, there exists $t > 0$ such that $Q \subseteq tU + C_X$. Since $F$ is linear,  we have 
	\[
	F(Q) \subseteq F(tU + C_X) = tF(U) + F(C_X).
	\]
	Observe that $F(U) = F(F^{-1}(V)) \subseteq V$ and $F(C_X) \subseteq C$, hence $F(Q) \subseteq tV + C$.
	By the arbitrariness of $V$, $F(Q)$ is $C$-bounded.
	
	(ii). Since $X, Y$ are Banach spaces and $F$ is a continuous linear surjection, by the Banach--Schauder open mapping theorem, $F$ is an open map. Take any open neighborhood $U$ of $0$ in $X$. Set $V := F(U)$. By the open mapping property, $V$ is an open set in $Y$ containing $0 \in Y$. By the $C$-boundedness of $F(Q)$, there exists $t > 0$ such that $F(Q) \subseteq tV + C$. For any $x \in Q$, since $F(x) \in F(Q) \subseteq tV + C$, there exist $v \in V$ and $c \in C$ such that $F(x) = t v + c$. From $V = F(U)$, there exists $u \in U$ satisfying $v = F(u)$. Thus,
	\[
	F(x) = t F(u) + c \implies F(x - t u) = c \in C.
	\]
	By $C_X = F^{-1}(C)$, we have $x - t u \in C_X$. Hence $x = t u + (x - t u) \in tU + C_X$.
	By the arbitrariness of $x$, $Q \subseteq tU + C_X$. Therefore, $Q$ is $C_X$-bounded.
\end{proof}

\begin{remark}  By   Corollary  \ref{ZRUQ}  and Proposition  \ref{QQB}  (i), we get that  if $A$ is $C_X$-bounded, then $\theta(x)<+\infty$ for all $x\in X$.
   \end{remark}

\begin{remark}   \label{VKYJQ}  Let $y^* \in  C^+$ and  $A$ be $C_X$-bounded.  It follows from  Proposition  \ref{QQB}  (i)  that $F(A)$ is $C$-bounded. Then it is easy to obtain that 
	$$\mathop {\inf }\limits_{a \in A} \left\langle {{F^*}{y^*},a} \right\rangle  = \mathop {\inf }\limits_{a \in A} \left\langle {{y^*},F\left( a \right)} \right\rangle  >  - \infty $$
	
  \end{remark}

\begin{lemma}\label{XDRT}
	If $F$ is surjective, then there exists a constant $\lambda>0$ such that for every $x\in X$,
	\[
	\operatorname{dist}(x, C_X) \le \lambda \operatorname{dist}(F(x), C).
	\]
\end{lemma}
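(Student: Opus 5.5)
The plan is to combine the Banach--Schauder open mapping theorem with an explicit lifting of a nearest point of $C$. Since $F:X\to Y$ is a continuous linear surjection between Banach spaces, it is open. Hence there is a constant $\lambda>0$ such that for every $y\in Y$ there exists $u\in X$ with $F(u)=y$ and $\|u\|\le \lambda\|y\|$. This is the standard quantitative form of openness: $F(B_X)$ contains a ball $\rho B_Y$ for some $\rho>0$, and scaling gives any $\lambda>1/\rho$.

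Next, fix $x\in X$ and $\varepsilon>0$. Set $\delta:=\operatorname{dist}(F(x),C)$. By definition of the distance, choose $c\in C$ with $\|F(x)-c\|\le \delta+\varepsilon$. Apply the lifting to $y:=F(x)-c$ to obtain $u\in X$ with $F(u)=F(x)-c$ and $\|u\|\le\lambda(\delta+\varepsilon)$. Then $F(x-u)=c\in C$, so $x-u\in F^{-1}(C)=C_X$. Consequently
\[
\operatorname{dist}(x,C_X)\le \|x-(x-u)\|=\|u\|\le \lambda(\delta+\varepsilon).
\]
Letting $\varepsilon\downarrow 0$ gives $\operatorname{dist}(x,C_X)\le\lambda\operatorname{dist}(F(x),C)$. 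The constant $\lambda$ depends only on $F$, not on $x$ or on $C$.

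The only nontrivial step is extracting a uniform bound on preimage norms from openness. I would cite it as the open mapping theorem in the form ``there exists $\rho>0$ with $\rho B_Y\subseteq F(B_X)$''. For $y\neq 0$, write $y=\frac{\|y\|}{\rho'}\,(\rho' y/\|y\|)$ with $\rho'<\rho$, and pick a preimage of $\rho' y/\|y\|$ in $B_X$. This yields $\lambda=1/\rho'$, and the case $y=0$ is trivial with $u=0$. No closedness or attainment of the infimum in $C$ is needed, because of the $\varepsilon$-argument.
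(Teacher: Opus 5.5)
Your proof is correct and follows essentially the same route as the paper: the open mapping theorem gives $rB_Y\subseteq F(B_X)$, you lift $F(x)-c_\varepsilon$ to some $u$ with $\|u\|\le \|F(x)-c_\varepsilon\|/r$, observe that $x-u\in C_X$, and let $\varepsilon\downarrow 0$. The only cosmetic difference is that the paper treats $\operatorname{dist}(F(x),C)=0$ as a separate case using closedness of $C$, whereas your uniform lifting (with $u=0$ when $y=0$) together with the $\varepsilon$-argument makes that case split, and the appeal to closedness, unnecessary.
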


\begin{proof}
	Let $\delta = \operatorname{dist}(F(x), C)$. 
	If \(\delta=0\),  it follows from   the closedness of  \(C\)   that 
	 \(F(x)\in C\), and hence \(x\in C_X\). Therefore, the result is trivial. In the following, we consider the case \(\delta>0\).

	For any $\varepsilon > 0$,   there exists $c_\varepsilon \in C$ such that
	\begin{equation}\label{eq:1}
		\|F(x) - c_\varepsilon\| < \delta + \varepsilon .
	\end{equation}
	Since $F$ is a continuous linear surjection, the Banach--Schauder open mapping theorem asserts that $F$ is open. Hence there exists $r>0$ such that
	\begin{equation}\label{eq:2}
		r B_Y \subseteq F(B_X).
	\end{equation}

	Let $y_\varepsilon := F(x) - c_\varepsilon \in Y$.	We conclude from  \(\delta>0\)  that    $y_\varepsilon \neq 0$,  and define
$	y_r := r \frac{y_\varepsilon}{\|y_\varepsilon\|}.$
	Then $y_r \in r B_Y$. By \eqref{eq:2}, there exists $v_r \in B_X$ such that $F(v_r) = y_r$. Let
$	u_r := \frac{\|y_\varepsilon\|}{r} v_r.$
	By linearity of $F$,
	\begin{equation}\label{eq:++}
	F(u_r) = \frac{\|y_\varepsilon\|}{r} F(v_r) = \frac{\|y_\varepsilon\|}{r} \cdot r \frac{y_\varepsilon}{\|y_\varepsilon\|} = y_\varepsilon.
	\end{equation}
	Moreover,
	\begin{equation}\label{eq:3}
		\|u_r\| = \frac{\|y_\varepsilon\|}{r} \|v_r\| \leq \frac{\|y_\varepsilon\|}{r} .
	\end{equation}
It follows from  (\ref{eq:++})  that  $	F(x - u_r) = F(x) - y_\varepsilon = c_\varepsilon \in C,$
and	so $x - u_r \in F^{-1}(C) = C_X$. Combining \eqref{eq:3} and $y_\varepsilon = F(x) - c_\varepsilon$, we obtain
	\begin{equation}\label{eq:4}
		\operatorname{dist}(x, C_X) \le \|x - (x - u_r)\| = \|u_r\| \leq \frac{\|y_\varepsilon\|}{r} = \frac{1}{r} \|F(x) - c_\varepsilon\|.
	\end{equation}
	Together with \eqref{eq:1}, we have
$	\operatorname{dist}(x, C_X) \le \frac{1}{r} (\delta + \varepsilon).$
	Since $\varepsilon > 0$ is arbitrary,  we get
	\[
	\operatorname{dist}(x, C_X) \le \frac{1}{r} \delta = \frac{1}{r} \operatorname{dist}(F(x), C).
	\]
\end{proof}

\begin{proposition}
	For a nonempty set $Q \subseteq X$, we have:
	\begin{enumerate}
		\item[(i)] if $Q$ is $C_X$-sequentially compact, then $F(Q)$ is $C$-sequentially compact.
		\item[(ii)] if $F$ is surjective     and    $F(Q)$ is $C$-sequentially compact, then   $Q$ is $C_X$-sequentially compact.
	\end{enumerate}
\end{proposition}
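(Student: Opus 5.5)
Part (i) mirrors Proposition \ref{QQB} (i), using only linearity and continuity of $F$ together with $F(C_X)\subseteq C$. Take a sequence $\{y_n\}\subseteq F(Q)$ and choose $x_n\in Q$ with $y_n=F(x_n)$. By $C_X$-sequential compactness of $Q$ there are $\{k_n\}\subseteq C_X$, a point $x_0\in Q$ and a subsequence with $x_{n_j}-k_{n_j}\to x_0$. Set $c_n:=F(k_n)$, which lies in $C$ because $C_X=F^{-1}(C)$, and $y_0:=F(x_0)\in F(Q)$. Continuity and linearity of $F$ then give
\[
y_{n_j}-c_{n_j}=F(x_{n_j}-k_{n_j})\to F(x_0)=y_0 .
\]
This is exactly the definition of $C$-sequential compactness of $F(Q)$.

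For (ii) the issue is that convergence in $Y$ has to be lifted back to $X$. I plan to use the open mapping theorem in the quantitative form already exploited in Lemma \ref{XDRT}: there is $r>0$ with $rB_Y\subseteq F(B_X)$. Consequently, every $y\in Y$ has a preimage $u\in X$ with $F(u)=y$ and $\|u\|\le \|y\|/r$. This is the estimate (\ref{eq:3}) from the proof of Lemma \ref{XDRT}; the case $y=0$ is handled by taking $u=0$.

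Now take $\{x_n\}\subseteq Q$ and put $y_n:=F(x_n)\in F(Q)$. By hypothesis there are $\{c_n\}\subseteq C$, a point $y_0=F(x_0)$ with $x_0\in Q$, and a subsequence with $z_j:=y_{n_j}-c_{n_j}-y_0\to 0$. Surjectivity of $F$ lets me pick, for each $n$, some $w_n\in X$ with $F(w_n)=c_n$; then $w_n\in C_X$. The difficulty is that $x_{n_j}-w_{n_j}-x_0$ lies in $\ker F$ and need not be small. So I will absorb the kernel part into the cone element instead of choosing $w_n$ arbitrarily.

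Concretely, choose $u_j$ with $F(u_j)=z_j$ and $\|u_j\|\le \|z_j\|/r\to 0$. Define
\[
k_{n_j}:=x_{n_j}-x_0-u_j ,
\]
and set $k_n:=0$ for indices outside the subsequence. Then
\[
F(k_{n_j})=y_{n_j}-y_0-z_j=c_{n_j}\in C ,
\]
so $k_{n_j}\in C_X$, and also $0\in C_X$. Finally,
\[
x_{n_j}-k_{n_j}=x_0+u_j\to x_0\in Q ,
\]
which shows that $Q$ is $C_X$-sequentially compact. The step that needs care is this choice of $k_n$ through the bounded preimages $u_j$. It is the only place where surjectivity and the Banach structure enter; the rest of the argument is routine.
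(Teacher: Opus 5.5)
Your proposal is correct and essentially matches the paper's argument. For (i) the paper only cites Lemma 2.3 of \cite{Han2026}, and your direct two-line argument fills that in. For (ii) the paper applies Lemma~\ref{XDRT} to $u_k=x_{n_k}-x_0$ and then picks near-nearest points of $C_X$, whereas you inline the same open-mapping estimate to write down the element of $C_X$ exactly.
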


\begin{proof}
	(i). This follows from Lemma 2.3 of   \cite{Han2026}.
	
	(ii). Let $\{x_n\} \subseteq Q$ be an arbitrary sequence. Set $y_n:=F(x_n)\in F(Q)$. By $C$-sequential compactness of $F(Q)$, there exist a sequence $\{c_n\}  \subseteq C$, $y_0\in F(Q)$, and a subsequence $\{y_{n_k}\} $ of  $\{y_n\}$ such that
$	y_{n_k}-c_{n_k}\to y_0  $.
	Since $y_0\in F(Q)$, there exists $x_0\in Q$ with $F(x_0)=y_0$. Consider $u_k:=x_{n_k}-x_0\in X$. Then
	\[
	F(u_k)=F(x_{n_k})-F(x_0)=y_{n_k}-y_0.
	\]
	From $y_{n_k}-c_{n_k}\to y_0$, we have $y_{n_k}-y_0-c_{n_k}\to0$, i.e.,
$	F(u_k)-c_{n_k}\to 0 .$
	Because $c_{n_k}\in C$, we obtain $\dist(F(u_k),C)\le \|F(u_k)-c_{n_k}\|\to0$.
	Applying Lemma \ref{XDRT} to each $u_k$, we  have 
	\[
	\dist(u_k,\,C_X)\le \lambda \,\dist(F(u_k),C) \to 0 \qquad(k\to\infty).
	\]
Then for each $k$, there exists $w_k\in C_X$ such that $\|u_k-w_k\|<\dist(u_k,C_X)+\frac1k$. Hence $\|u_k-w_k\|\to0$.
	This together with  $u_k=x_{n_k}-x_0$  implies that 
	$	x_{n_k}-w_k = x_0 + u_k - w_k \to x_0 .$
	Set $d_{n_k}:=w_k\in C_X$ (for other indices one may define arbitrarily, e.g., $d_n=0$ if $n\notin\{n_k\}$). Then the sequence $\{d_n\}\subseteq C_X$ satisfies $x_{n_k}-d_{n_k}\to x_0$. Therefore,  $Q$ is $C_X$-sequentially compact.
\end{proof}

For the convenience of discussion, from now on we always assume that $\theta(x)<+\infty$ for all $x\in X$. The following two properties of $\theta$ are important because they play a key role in the derivation of several results in this paper.

\begin{theorem}\label{YL:thetaprop2}
	The  merit function $\theta$ satisfies:
	\begin{enumerate}
		\item[(i)] $\theta$ is concave on $X$;
		\item[(ii)] $\theta$ is Lipschitz continuous on $X$ with constant $\|F\|$.
	\end{enumerate}
\end{theorem}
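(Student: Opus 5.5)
For (i), I will use the representation from Definition \ref{def:theta},
\[
\theta(x)=\sup_{a\in A} g_a(x),\qquad g_a(x):=-\Delta_C(F(x)-F(a))=\inf_{y^*\in S(C^+)}\langle y^*,F(x)-F(a)\rangle .
\]
Each $g_a$ is concave, being an infimum of affine functions of $x$; equivalently, $\Delta_C$ is convex by Proposition \ref{XSQ}(viii) and $F$ is linear. A supremum of concave functions is not concave in general, however, so the convexity of $A$ has to enter. Fix $x_1,x_2\in X$, $\lambda\in[0,1]$, and $a_1,a_2\in A$, and put $a_\lambda=\lambda a_1+(1-\lambda)a_2\in A$. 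By linearity of $F$,
\[
F(\lambda x_1+(1-\lambda)x_2)-F(a_\lambda)=\lambda\bigl(F(x_1)-F(a_1)\bigr)+(1-\lambda)\bigl(F(x_2)-F(a_2)\bigr).
\]
Since $-\Delta_C$ is concave, it follows that
\[
\theta(\lambda x_1+(1-\lambda)x_2)\ \ge\ g_{a_\lambda}(\lambda x_1+(1-\lambda)x_2)\ \ge\ \lambda g_{a_1}(x_1)+(1-\lambda)g_{a_2}(x_2).
\]
Taking the supremum over $a_1$ and $a_2$ independently gives $\theta(\lambda x_1+(1-\lambda)x_2)\ge\lambda\theta(x_1)+(1-\lambda)\theta(x_2)$. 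The standing assumption $\theta<+\infty$ keeps this inequality between real numbers. We also need $\theta>-\infty$, which holds because $A\neq\emptyset$, so the supremum is taken over a nonempty family of real numbers. This joint choice of $a_\lambda$ is the only point where care is needed, and I expect it to be the main step of (i).

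For (ii), I will use the $1$-Lipschitz property of $\Delta_C$ from Proposition \ref{XSQ}(ii). For every $a\in A$ and $x_1,x_2\in X$,
\[
|g_a(x_1)-g_a(x_2)|\le\|F(x_1)-F(x_2)\|\le\|F\|\,\|x_1-x_2\|.
\]
Hence $g_a(x_1)\le g_a(x_2)+\|F\|\,\|x_1-x_2\|\le\theta(x_2)+\|F\|\,\|x_1-x_2\|$. Taking the supremum over $a\in A$ gives $\theta(x_1)-\theta(x_2)\le\|F\|\,\|x_1-x_2\|$, and exchanging the roles of $x_1$ and $x_2$ gives the reverse inequality. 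This again needs $\theta$ to be finite everywhere, which is the standing assumption together with $A\neq\emptyset$. So $\theta$ is Lipschitz on $X$ with constant $\|F\|$.
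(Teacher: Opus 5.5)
Your proposal is correct and matches the paper's proof. For (i), the paper also uses joint concavity of $(x,a)\mapsto -\Delta_C(F(x)-F(a))$ together with the convex combination $a_\lambda$; it phrases this with $\varepsilon$-approximate maximizers instead of taking suprema directly. For (ii), the paper likewise compares the two values at a fixed $a$, but it obtains the bound $\|F(x_1-x_2)\|$ from the dual formula over $S(C^+)$ rather than citing the $1$-Lipschitz property of $\Delta_C$.
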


\begin{proof}
	(i). Since $C$ is convex, $\Delta_C$ is convex, and therefore $-\Delta_C$ is concave on $Y$. Define $g: X\times A \to \mathbb{R}$ by
	\[
	g(x,a) = -\Delta_C\big(F(x)-F(a)\big),  \quad  \forall  (x,a)  \in  X\times A.
	\]
	For any $(x_1,a_1),(x_2,a_2) \in X\times A$ and $\lambda\in[0,1]$, we have
	\begin{eqnarray} \label{XSRT1}
		g\bigl( \lambda x_1 + (1-\lambda)x_2, \lambda a_1 + (1-\lambda)a_2 \bigr)
		&=& -\Delta_C\big( \lambda(F(x_1)-F(a_1)) + (1-\lambda)(F(x_2)-F(a_2)) \big) \nonumber\\ 
		&\ge& \lambda\big[-\Delta_C(F(x_1)-F(a_1))\big] + (1-\lambda)\big[-\Delta_C(F(x_2)-F(a_2))\big] \nonumber\\ 
		&=& \lambda g(x_1,a_1) + (1-\lambda)g(x_2,a_2).  
\end{eqnarray}
It is clear that
	\begin{equation} \label{XSRT2}
		\theta(x) = -\inf_{a\in A} \Delta_C\big(F(x)-F(a)\big)
		= \sup_{a\in A} \Big[-\Delta_C\big(F(x)-F(a)\big)\Big]
		= \sup_{a\in A} g(x,a), \quad \forall x \in X.  
	\end{equation}
Let $x_1, x_2 \in X$ and $\lambda\in[0,1]$.
	For any $\varepsilon>0$, by (\ref{XSRT2}), there exist $a_1,a_2\in A$ such that
\begin{equation} \label{XSRT+2}
	g(x_1,a_1) > \theta(x_1) - \varepsilon, \qquad g(x_2,a_2) > \theta(x_2) - \varepsilon.
\end{equation}
	Set $a_\lambda = \lambda a_1 + (1-\lambda)a_2$. Since $A$ is convex, $a_\lambda \in A$. Then  it follows from  (\ref{XSRT2})  that
\begin{equation} \label{XSRT3}
	\theta(\lambda x_1 + (1-\lambda)x_2) \ge g\big(\lambda x_1 + (1-\lambda)x_2,\; a_\lambda\big).
	\end{equation}
    We conclude from  (\ref{XSRT1})  and  (\ref{XSRT+2})  that
	\[
	\begin{aligned}
		g\big(\lambda x_1 + (1-\lambda)x_2,\; a_\lambda\big)
		&\ge \lambda g(x_1,a_1) + (1-\lambda)g(x_2,a_2) \\
		&> \lambda(\theta(x_1)-\varepsilon) + (1-\lambda)(\theta(x_2)-\varepsilon) \\
		&= \lambda\theta(x_1) + (1-\lambda)\theta(x_2) - \varepsilon.
	\end{aligned}
	\]
	  This together with   (\ref{XSRT3})  implies that 
	\[
	\theta(\lambda x_1 + (1-\lambda)x_2) \ge \lambda\theta(x_1) + (1-\lambda)\theta(x_2) - \varepsilon.
	\]
	By the arbitrariness of $\varepsilon>0$, we obtain
$
	\theta(\lambda x_1 + (1-\lambda)x_2) \ge \lambda\theta(x_1) + (1-\lambda)\theta(x_2).
$
	
	(ii). Fix arbitrary $x_1,x_2\in X$. For any given $\epsilon>0$, there exists $a_\epsilon\in A$ such that
	\begin{equation}   \label{XSRQT1}
		\theta(x_1)-\epsilon < \inf_{y^*\in S(C^+)}\langle y^*,\,F(x_1)-F(a_\epsilon)\rangle .  
	\end{equation}
	For this $a_\epsilon\in A$, we get 
$$
		\theta(x_2)=\sup_{a\in A}\inf_{y^*\in S(C^+)}\langle y^*,\,F(x_2)-F(a)\rangle
		\ge \inf_{y^*\in S(C^+)}\langle y^*,\,F(x_2)-F(a_\epsilon)\rangle , 
$$
 and so
	\begin{equation}\label{XSRQT2}
		-\theta(x_2)\le -\inf_{y^*\in S(C^+)}\langle y^*,\,F(x_2)-F(a_\epsilon)\rangle .  
	\end{equation}
	Adding  (\ref{XSRQT1})  and  (\ref{XSRQT2}), we obtain
	\begin{align*}
		\theta(x_1)-\theta(x_2)-\epsilon 
		&< \inf_{y^*\in S(C^+)}\langle y^*,\,F(x_1)-F(a_\epsilon)\rangle
		-\inf_{y^*\in S(C^+)}\langle y^*,\,F(x_2)-F(a_\epsilon)\rangle \\
		&\le \sup_{y^*\in S(C^+)}\Big[
		\langle y^*,\,F(x_1)-F(a_\epsilon)\rangle
		-\langle y^*,\,F(x_2)-F(a_\epsilon)\rangle
		\Big] \\
		&= \sup_{y^*\in S(C^+)} \langle y^*,\,F(x_1-x_2)\rangle  
		\le \sup_{y^*\in S(C^+)} \|y^*\|\,\|F(x_1-x_2)\| \\
		&= \|F(x_1-x_2)\| \le \|F\|\,\|x_1-x_2\|.
	\end{align*}
	Since $\epsilon>0$ is arbitrary, we have
$	\theta(x_1)-\theta(x_2) \le \|F\|\,\|x_1-x_2\|. $
	By symmetry,  we get
$	\theta(x_2)-\theta(x_1) \le \|F\|\,\|x_1-x_2\|.$
	Hence,
	$	|\theta(x_1)-\theta(x_2)| \le \|F\|\,\|x_1-x_2\|.$
\end{proof}

\begin{theorem}  \label{TxEQ} 
	Let $x\in X$. The following assertions hold:
	\begin{enumerate}
		\item[(i)] $x\in A + \operatorname{int}C_X$ if and only if $\theta(x)>0$;
		\item[(ii)] If $x\in A + C_X$, then $\theta(x)\ge 0$;
		\item[(iii)] If $\theta(x)\ge 0$ and $F(A)$ is $C$-sequentially compact, then $x\in A + C_X$.
	\end{enumerate}
\end{theorem}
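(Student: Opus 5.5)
The plan is to work directly from the definition $\theta(x)=\sup_{a\in A}\bigl(-\Delta_C(F(x)-F(a))\bigr)$, using the sign characterizations of the oriented distance in Proposition \ref{XSQ} (iii), (iv), (vi), together with $\operatorname{int}C_X=F^{-1}(\operatorname{int}C)$ (valid under our standing assumption $F(X)\cap\operatorname{int}C\neq\emptyset$) and $C_X=F^{-1}(C)$.

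For (i), first take $x\in A+\operatorname{int}C_X$ and write $x=a_0+k$ with $a_0\in A$ and $k\in\operatorname{int}C_X$. Then $F(x)-F(a_0)=F(k)\in\operatorname{int}C$, so Proposition \ref{XSQ}(iii) gives $\Delta_C(F(x)-F(a_0))<0$. Hence $\theta(x)\ge-\Delta_C(F(k))>0$. Conversely, suppose $\theta(x)>0$. By the definition of the supremum there is $a\in A$ with $-\Delta_C(F(x)-F(a))>0$, that is, $\Delta_C(F(x-a))<0$. Proposition \ref{XSQ}(iii) then yields $F(x-a)\in\operatorname{int}C$, so $x-a\in F^{-1}(\operatorname{int}C)=\operatorname{int}C_X$, and therefore $x\in A+\operatorname{int}C_X$.

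Part (ii) follows the same pattern. If $x=a_0+k$ with $k\in C_X$, then $F(k)\in C$, and Proposition \ref{XSQ}(vi), since $C$ is closed, gives $\Delta_C(F(k))\le0$. So $\theta(x)\ge-\Delta_C(F(x)-F(a_0))\ge0$.

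For (iii), the supremum is not attained in general, and this is the only real point. I will invoke Proposition \ref{XLJKD}: $C$-sequential compactness of $F(A)$ gives $a_0\in A$ with $\theta(x)=-\Delta_C(F(x)-F(a_0))$. From $\theta(x)\ge0$ we get $\Delta_C(F(x-a_0))\le0$. By Proposition \ref{XSQ}(vi), $F(x-a_0)\in C$, so $x-a_0\in C_X$ and $x\in A+C_X$. Without attainment the argument would fail, because only $\Delta_C(F(x-a_n))\to\le0$ would be available, and $A+C_X$ need not be closed. The compactness hypothesis is precisely what supplies the attaining $a_0$.
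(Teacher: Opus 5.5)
Your proposal is correct and follows essentially the same route as the paper. Parts (i) and (ii) use the sign characterizations of $\Delta_C$ from Proposition \ref{XSQ} together with $\operatorname{int}C_X=F^{-1}(\operatorname{int}C)$, and part (iii) uses the attainment result of Proposition \ref{XLJKD} followed by closedness of $C$.
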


\begin{proof}
	For any $x\in X$, by definition of $\theta$,
	\begin{equation}\label{eq:theta-dist}
		\theta(x)=\sup_{a\in A}\bigl[-\Delta_C(F(x)-F(a))\bigr].
	\end{equation}
	
	\textbf{(i)} Suppose $x\in A+\operatorname{int}C_X$. Then there exist $a_0\in A$ and $d\in\operatorname{int}C_X$ such that $x=a_0+d$. By   $\operatorname{int}C_X=F^{-1}(\operatorname{int}C)$, we get $F(d)\in\operatorname{int}C$. It follows from   Proposition \ref{XSQ} that
	$${\Delta _C}\bigl( F(x) - F(a_0) \bigr) = {\Delta _C}\bigl( F(x - a_0) \bigr) = {\Delta _C}\bigl( F(d) \bigr) < 0.$$
	By \eqref{eq:theta-dist}, we have  $\theta(x) \ge -{\Delta _C}\bigl( F(x) - F(a_0) \bigr) > 0$.
	
	Conversely, suppose $\theta(x)>0$. By \eqref{eq:theta-dist}, there exists $a' \in A$ such that
	$-\Delta_C(F(x) - F(a')) > 0$.
	By Proposition \ref{XSQ},
	$F(x) - F(a') \in \operatorname{int} C$.
	Hence
	$x - a' \in F^{-1}(\operatorname{int} C) = \operatorname{int} C_X$.
	Therefore
	$$x \in a' + \operatorname{int} C_X \subseteq A + \operatorname{int} C_X.$$
	
	\textbf{(ii)} Suppose $x\in A + C_X$. Then there exist $a_0\in A$ and $d\in C_X$ such that $x=a_0+d$. Hence
	\[
	F(x)-F(a_0)=F(d)\in F(C_X)\subseteq C.
	\]
	Thus ${\Delta _C}\bigl( F(x) - F(a_0) \bigr) \le0$.     It follows from  (\ref{eq:theta-dist})  that
	$\theta(x) \ge -{\Delta _C}\bigl( F(x) - F(a_0) \bigr) \ge 0$.
	
	\textbf{(iii)} Since $F(A)$ is $C$-sequentially compact and $\theta(x)\ge 0$, by Proposition \ref{XLJKD}, there exists $a_0 \in A$ such that
	$$\theta(x) = -{\Delta _C}\bigl( F(x) - F(a_0) \bigr) \ge 0.$$
	By Proposition \ref{XSQ} and the closedness of $C$, we have $F(x)-F(a_0) \in C$, hence $x - a_0 \in C_X$, and therefore
	$x \in a_0 + C_X \subseteq A + C_X.$
\end{proof}

\begin{lemma}[Sion's Minimax Theorem]\label{SJDJX}
	Let $H$ be a nonempty compact convex set in a Hausdorff topological vector space, and $Q$ a nonempty convex set in a topological vector space. Suppose the function $f : H \times Q \to \mathbb{R}$ satisfies:
	\begin{enumerate}
		\item For each $y \in Q$, $x \mapsto f(x,y)$ is lower semicontinuous and convex on $H$;
		\item For each $x \in H$, $y \mapsto f(x,y)$ is concave on $Q$.
	\end{enumerate}
	Then
	\[
	\min_{x \in H} \sup_{y \in Q} f(x,y) = \sup_{y \in Q} \min_{x \in H} f(x,y).
	\]
\end{lemma}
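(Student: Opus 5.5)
We prove the two inequalities separately. The easy direction is standard; the nontrivial one reduces, by compactness of $H$, to finitely many $y$'s and is then settled by a separation argument in $\mathbb{R}^n$. This is the Kneser--Fan form of the minimax theorem: only convexity and lower semicontinuity in $x$ on the compact set $H$, together with concavity in $y$, are used, and no topology on $Q$ is needed.

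\emph{Both minima exist.} The function $x\mapsto \sup_{y\in Q} f(x,y)$ is a pointwise supremum of lower semicontinuous functions, so it is lower semicontinuous (with values in $\mathbb{R}\cup\{+\infty\}$). It therefore attains its minimum on the compact set $H$. Likewise, for each fixed $y$, the function $x\mapsto f(x,y)$ attains its minimum on $H$, so the right-hand side is well defined.

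\emph{The inequality $\sup_y\min_x f\le \min_x\sup_y f$.} For every $x'\in H$ and $y\in Q$ we have $\min_{x\in H} f(x,y)\le f(x',y)\le \sup_{y'\in Q} f(x',y')$. Taking the supremum over $y$ on the left and the minimum over $x'$ on the right gives the claim.

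\emph{The reverse inequality.} Fix any real $c<\min_{x}\sup_{y} f(x,y)$; it suffices to find $\bar y\in Q$ with $\min_x f(x,\bar y)> c$.

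\begin{enumerate}
\item[(a)] \emph{Reduction to finitely many $y$'s.} For each $y\in Q$, the set $L_y:=\{x\in H: f(x,y)\le c\}$ is closed in $H$ by lower semicontinuity. The intersection $\bigcap_{y\in Q}L_y$ is empty, since a point in it would satisfy $\sup_y f(x,y)\le c$. By compactness, there are $y_1,\dots,y_n\in Q$ with $\bigcap_i L_{y_i}=\emptyset$. Hence $g(x):=\max_i f(x,y_i)>c$ for every $x\in H$. Since $g$ is lower semicontinuous on the compact set $H$, its minimum $c'$ satisfies $c'>c$.

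\item[(b)] \emph{Separation in $\mathbb{R}^n$.} Consider
\[
M:=\{z\in\mathbb{R}^n:\ \exists x\in H,\ z_i\ge f(x,y_i)\ \forall i\}.
\]
$M$ is convex, because $H$ is convex and each $f(\cdot,y_i)$ is convex. $M$ is disjoint from the open convex set $O:=\{z: z_i<c'\ \forall i\}$, since $\max_i f(x,y_i)\ge c'$ for all $x\in H$. By the finite-dimensional separation theorem, there is $\lambda\ne 0$ with $\langle\lambda,z\rangle\ge\langle\lambda,w\rangle$ for all $z\in M$ and $w\in O$. Because $M$ is upward closed, $\lambda_i\ge 0$ for all $i$, and we normalize so that $\sum_i\lambda_i=1$. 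Letting $w\to(c',\dots,c')$ yields
\[
\sum_i\lambda_i f(x,y_i)\ge c'\qquad\text{for all } x\in H.
\]

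\item[(c)] \emph{Concavity in $y$.} Set $\bar y:=\sum_i\lambda_i y_i$; then $\bar y\in Q$ because $Q$ is convex. Concavity of $f(x,\cdot)$ gives $f(x,\bar y)\ge\sum_i\lambda_i f(x,y_i)\ge c'>c$ for all $x\in H$. Hence $\min_x f(x,\bar y)>c$, and since $c$ was arbitrary, equality of the two sides follows.
\end{enumerate}

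\emph{Main obstacle.} The key step is the separation in (b): one must check that $M$ is convex and that the separating functional can be taken nonnegative and normalized. The compactness reduction (a) is what makes a finite-dimensional separation available in the first place.
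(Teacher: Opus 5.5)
Your proof is correct. The finite-intersection reduction in (a), the separation of the upward-closed convex set $M$ from the open set $O$ in (b) (with $\lambda\ge 0$ forced by upward closedness and then normalized), and the Jensen step $f(x,\bar y)\ge\sum_i\lambda_i f(x,y_i)$ in (c) are all sound. The argument uses only compactness of $H$, lower semicontinuity and convexity in $x$, and concavity in $y$.

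The paper gives no proof of this lemma; it simply invokes it as Sion's minimax theorem. Your self-contained Kneser--Fan argument actually fits the lemma as stated better than that citation does. Sion's theorem allows quasi-convex/quasi-concave $f$, but it requires $y\mapsto f(x,y)$ to be upper semicontinuous on $Q$, and that hypothesis is missing from the lemma. Your proof needs no topology on $Q$ at all. The price is that step (c) uses genuine concavity in $y$, so your argument does not reach the quasi-concave generality of Sion's theorem.

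For the paper's only use of the lemma, in Theorem~\ref{YDOBS}, the difference is immaterial, since there $a\mapsto\langle v,F(x)-F(a)\rangle$ is affine and continuous.
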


\begin{theorem}[Dual representation of the merit function]\label{YDOBS}
	The merit function $\theta$ can be represented as:
	\[
	\theta(x)=\min_{v\in K}\Bigl(\langle v,F(x)\rangle -\inf_{a\in A}\langle v,F(a)\rangle\Bigr)
	=\min_{v\in K}\Bigl(\langle F^*v, x\rangle+\sigma_A(-F^*v)\Bigr), \quad \forall x\in X.
	\]
	where $K=\overline{\operatorname{co}}^{w^*}\bigl(S(C^+)\bigr)$ and $\sigma_A(u)=\sup_{a\in A}\langle u,a\rangle$ is the support function of $A$.
\end{theorem}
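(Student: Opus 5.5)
We obtain the representation by an inf–sup exchange via Sion's Minimax Theorem (Lemma \ref{SJDJX}), applied on the weak$^*$-compact convex set $K$ of Proposition \ref{RDCxl} against the convex set $A$.

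First, replace $S(C^+)$ by $K$ in the inner infimum. For fixed $x\in X$ and $a\in A$, the map $v\mapsto\langle v,F(x)-F(a)\rangle$ is linear and weak$^*$ continuous on $Y^*$. Hence its infimum over $S(C^+)$ equals its infimum over $\co S(C^+)$, and also over the weak$^*$ closure $K$. By Definition \ref{def:theta} this gives
\[
\theta(x)=\sup_{a\in A}\inf_{v\in K}\langle v,F(x)-F(a)\rangle=\sup_{a\in A}\min_{v\in K}\langle v,F(x)-F(a)\rangle,
\]
where the minimum is attained because $K$ is weak$^*$ compact.

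Next, apply Lemma \ref{SJDJX} with $H=K$ in the Hausdorff locally convex space $(Y^*,w^*)$, $Q=A\subseteq X$, and $f(v,a)=\langle v,F(x)-F(a)\rangle$. The function $f$ is real-valued. For fixed $a$ it is weak$^*$ continuous and affine, hence convex and lsc, in $v$. For fixed $v$ it is affine, hence concave, in $a$. Sion's theorem then yields
\[
\theta(x)=\min_{v\in K}\sup_{a\in A}\langle v,F(x)-F(a)\rangle=\min_{v\in K}\Bigl(\langle v,F(x)\rangle-\inf_{a\in A}\langle v,F(a)\rangle\Bigr).
\]
Here the outer minimum is attained in the extended sense. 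The quantity $\sup_{a}$ may be $+\infty$ for some $v$, but it is a weak$^*$ lsc function of $v$ as a supremum of continuous functions, so it attains its minimum on the compact set $K$. That minimum is finite, since it equals $\theta(x)<+\infty$ by the standing assumption.

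Finally, rewrite using the adjoint. We have $\langle v,F(x)\rangle=\langle F^*v,x\rangle$, and
\[
-\inf_{a\in A}\langle v,F(a)\rangle=\sup_{a\in A}\langle -F^*v,a\rangle=\sigma_A(-F^*v).
\]

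The main obstacle is checking that Sion's theorem applies as stated. Lemma \ref{SJDJX} asks for real-valued $f$, and ours is, but the conclusion involves $\sup_{a}$, which may be $+\infty$ for some $v$. The version of Sion's theorem with compactness on the convex (minimizing) side is exactly the form stated, so it covers this. One must also verify the weak$^*$ topology setting: $K$ compact in a Hausdorff TVS, and $f(\cdot,a)$ lsc in that topology. Both hold because $f(\cdot,a)$ is evaluation at the fixed point $F(x)-F(a)\in Y$.
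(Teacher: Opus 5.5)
Your proposal is correct and follows essentially the same route as the paper. Like the paper, you replace $S(C^+)$ by $K$ in the inner infimum using weak$^*$ continuity of $v\mapsto\langle v,F(x)-F(a)\rangle$, exchange $\sup_{a\in A}$ and $\min_{v\in K}$ via Sion's theorem on the weak$^*$-compact set $K$, and rewrite the result with the adjoint. You also note that $v\mapsto\sup_{a\in A}\langle v,F(x)-F(a)\rangle$ is an extended-valued weak$^*$ lsc function, so the outer minimum is attained; this is a small point of care the paper leaves implicit.
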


\begin{proof}	For any fixed $z\in Y$, the linear functional $y^*\mapsto\langle y^*,z\rangle$ is weak$^*$-continuous on $Y^*$, hence attains its minimum on the weak$^*$-compact set $K$, and this minimum equals the infimum over $S(C^+)$, 
\begin{equation}\label{DmlB1}
	\inf_{y^*\in S(C^+)}\langle y^*,z\rangle = \min_{v\in K}\langle v,z\rangle .  
 \end{equation}
	By definition of $\theta$ and  (\ref{DmlB1}),
\begin{equation}\label{DmlB2}
	\theta(x) = \sup_{a\in A} \inf_{y^*\in S(C^+)} \langle y^*, F(x)-F(a)\rangle
	= \sup_{a\in A}\, \min_{v\in K} \langle v, F(x)-F(a)\rangle .  
 \end{equation}
	Define the bifunction $\varphi : A\times K  \to  \mathbb{R} $ by
	\[
	\varphi(a,v) = \langle v, F(x)-F(a)\rangle , \quad   \forall   (a,v)\in A\times K .
	\]
 	By Lemma \ref{SJDJX} (Sion's minimax theorem), we have
\begin{equation}\label{DmlB3}
	\sup_{a\in A}\, \min_{v\in K} \varphi(a,v) = \min_{v\in K}\, \sup_{a\in A} \varphi(a,v).  
\end{equation}
	Then,
	\begin{eqnarray} \label{DmlB4}
		\sup_{a\in A}\varphi(a,v) &=& \sup_{a\in A}\langle v, F(x)-F(a)\rangle \nonumber\\ 
		&=& \langle v, F(x)\rangle + \sup_{a\in A}\langle -v, F(a)\rangle  		 = \langle v, F(x)\rangle - \inf_{a\in A}\langle v, F(a)\rangle .
\end{eqnarray}
	Using the adjoint operator $F^*:Y^*\to X^*$, we have
	$$\langle v, F(x)\rangle = \langle F^*v, x\rangle, \quad \forall x\in X,$$
	and
	\[
	-\inf_{a\in A}\langle v, F(a)\rangle = \sup_{a\in A}\langle -F^*v, a\rangle = \sigma_A(-F^*v).
	\]
	Combining (\ref{DmlB2}), (\ref{DmlB3})  and  (\ref{DmlB4}), we obtain
	\[
	\theta(x) = \min_{v\in K} \Bigl( \langle v, F(x)\rangle - \inf_{a\in A}\langle v, F(a)\rangle \Bigr)
	= \min_{v\in K} \Bigl( \langle F^*v, x\rangle + \sigma_A(-F^*v) \Bigr).
	\]
\end{proof}

From  Theorem 2.4.18 (p.~97) of \cite{Zalinescu2002}, we obtain the following lemma.

\begin{lemma}\label{ITDL}
	Let   $X$ be a locally convex Hausdorff topological vector space,  $Q$ be a nonempty compact Hausdorff space,  $g: Q \to (X^*, w^*)$ be continuous with respect to the weak$^*$-topology on $X^*$,  and $s: Q \to \mathbb{R}$ be upper semicontinuous.
 	For each $v \in Q$, define the affine function
	\[
	f_v(x) = \langle g(v), x \rangle + s(v), \qquad x \in X.
	\]
	Set
	\[
	\phi(x) = \max_{v \in Q} f_v(x), \qquad x \in X.
	\]
	Then $\phi: X \to \mathbb{R}$ is a convex function.  If $\phi$ is continuous at some point $\bar x \in X$, then its subdifferential $\partial \phi(\bar x)$ is nonempty and satisfies
	\[
	\partial \phi(\bar x) = \overline{\operatorname{co}}^{w^*} \{ g(v) : v \in Q(\bar x) \},
	\]
	where $Q(\bar x) = \{ v \in Q : f_v(\bar x) = \phi(\bar x) \}$ is the active index set.
\end{lemma}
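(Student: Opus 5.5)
The plan is to treat $\phi$ as a pointwise maximum of continuous affine functions indexed by a compact set. We get the subdifferential formula by proving the two inclusions separately. The harder inclusion goes through a formula for the directional derivative of $\phi$ combined with weak$^*$ separation.

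\emph{Step 1 (well-posedness and convexity).} For fixed $x$, the map $v\mapsto \langle g(v),x\rangle$ is continuous on $Q$, because $g$ is weak$^*$-continuous. Hence $v\mapsto f_v(x)$ is upper semicontinuous and attains its maximum on the compact set $Q$. So $\phi$ is real-valued, and $Q(x)\neq\emptyset$. Moreover $Q(\bar x)=\{v: f_v(\bar x)\ge \phi(\bar x)\}$ is closed, hence compact. As a supremum of affine continuous functions, $\phi$ is convex.

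\emph{Step 2 (the inclusion $\supseteq$).} Let $v\in Q(\bar x)$. For every $y$,
\[
\phi(y)\ge f_v(y)=\phi(\bar x)+\langle g(v),y-\bar x\rangle ,
\]
so $g(v)\in\partial\phi(\bar x)$. Since $\partial\phi(\bar x)$ is convex and weak$^*$-closed, it contains $M:=\overline{\operatorname{co}}^{w^*}\{g(v):v\in Q(\bar x)\}$. This also shows $\partial\phi(\bar x)\neq\emptyset$.

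\emph{Step 3 (directional derivative).} We will show
\[
\phi'(\bar x;h)=\max_{v\in Q(\bar x)}\langle g(v),h\rangle .
\]
The inequality "$\ge$" follows from Step 2. For "$\le$", take $t\downarrow 0$ and choose $v_t\in Q(\bar x+th)$. Then
\[
\frac{\phi(\bar x+th)-\phi(\bar x)}{t}\le \frac{f_{v_t}(\bar x+th)-f_{v_t}(\bar x)}{t}=\langle g(v_t),h\rangle .
\]
Pass to a subnet with $v_t\to v_0$ in $Q$. The identity $f_{v_t}(\bar x)=\phi(\bar x+th)-t\langle g(v_t),h\rangle$ holds, and $\langle g(v_t),h\rangle$ is bounded since $g(Q)$ is weak$^*$-compact. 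Together with continuity of $\phi$ at $\bar x$, this gives $f_{v_t}(\bar x)\to\phi(\bar x)$. By upper semicontinuity, $f_{v_0}(\bar x)\ge\phi(\bar x)$, so $v_0\in Q(\bar x)$. Weak$^*$-continuity of $g$ then gives $\limsup \langle g(v_t),h\rangle=\langle g(v_0),h\rangle$. Because this works along every subnet, the upper bound holds. I expect this step to be the main obstacle: one must argue with nets and carefully justify that the limit index is active.

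\emph{Step 4 (the inclusion $\subseteq$).} Continuity of $\phi$ at $\bar x$ makes $\partial\phi(\bar x)$ equicontinuous, hence weak$^*$-compact by Alaoglu–Bourbaki. In particular $M$ is weak$^*$-compact and convex. Suppose $x^*\in\partial\phi(\bar x)\setminus M$. Separate in $(X^*,w^*)$, whose dual is $X$, to find $h\in X$ with
\[
\langle x^*,h\rangle>\sup_{y^*\in M}\langle y^*,h\rangle=\max_{v\in Q(\bar x)}\langle g(v),h\rangle=\phi'(\bar x;h).
\]
This contradicts $\langle x^*,h\rangle\le\phi'(\bar x;h)$, which holds for every subgradient. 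Hence $\partial\phi(\bar x)=M$.
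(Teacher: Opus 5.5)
Your proof is correct, but it takes a different route from the paper, which gives no argument of its own. The paper obtains the lemma by specializing Theorem 2.4.18 of Z\u{a}linescu's book, a general formula for the subdifferential of a pointwise supremum of convex functions over a compact index set, to the affine case where $\partial f_v(\bar x)=\{g(v)\}$. You instead prove the affine case directly. First you establish the Danskin-type formula $\phi'(\bar x;h)=\max_{v\in Q(\bar x)}\langle g(v),h\rangle$, using a subnet and upper semicontinuity to show that the limit index is active. Then you apply weak$^*$ separation in $(X^*,w^*)$. These are the same two arguments the paper itself runs later for $\theta$ in the proof of Theorem~\ref{AXDSS}, so your version makes the lemma self-contained at little cost. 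The citation, by contrast, buys brevity and also covers non-affine $f_v$.

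Your route also shows that the continuity hypothesis is essentially unused for the stated conclusions. In Step~3, $\phi(\bar x+th)\to\phi(\bar x)$ holds anyway, because $t\mapsto\phi(\bar x+th)$ is a finite convex function on $\mathbb{R}$ (finiteness comes from Step~1). In Step~4, strictly separating $x^*$ from the nonempty weak$^*$-closed convex set $M$ needs no compactness, so the appeal to equicontinuity and Alaoglu--Bourbaki can be dropped. Continuity at $\bar x$ is needed only if you also want $\partial\phi(\bar x)$ to be weak$^*$-compact.

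A minor presentational point for Step~3: first note that $\phi'(\bar x;h)$ exists because convex difference quotients are monotone. Then run the subnet argument along a single sequence $t_n\downarrow 0$, which is cleaner than appealing to ``every subnet''.
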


Let $\xi = -\theta$. Clearly $\xi$ is a convex   function on $X$.  Denote by $\partial \xi({x})$ the subdifferential of the convex function $\xi$ at $x \in X $, i.e.,
\[
\partial \xi(\bar{x}) := \big\{ x^* \in X^* : \xi(y) \ge \xi(\bar{x}) + \langle x^*, y - x \rangle,\ \forall y \in X \big\}.
\]

For the concave function \(\theta\), the superdifferential at a point \(x\in X\) is defined by
\[
\partial^+ \theta(x):=\{x^*\in X^*:\theta(y)\le\theta(x)+\langle x^*,y-x\rangle, \; \forall y\in X\}.
\]

\begin{theorem}  \label{BSXEQ}  Let $A  \subseteq X$ be $C_X$-bounded.
 For any $\bar{x} \in E_w$, the following representation holds:
	\[
	-\partial \xi(\bar{x}) = \overline{\operatorname{co}}^{w^*} \bigl\{ F^*v : v \in K,\; F^*v \in -N_A(\bar{x}) \bigr\} = \bigl( -N_A(\bar{x}) \bigr) \cap \overline{\operatorname{co}}^{w^*} \bigl( F^*(S(C^+)) \bigr) = \bigl( -N_A(\bar{x}) \bigr) \cap F^*(K).
	\]
\end{theorem}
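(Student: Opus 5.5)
Writing $\xi=-\theta$ and using Theorem~\ref{YDOBS}, we have
\[
\xi(x)=\max_{v\in K}\bigl(\langle -F^*v,x\rangle-\sigma_A(-F^*v)\bigr),
\]
so $\xi$ has exactly the max-of-affine form of Lemma~\ref{ITDL}. The plan is to apply that lemma with $Q=K$ (weak$^*$-compact by Proposition~\ref{RDCxl}), $g(v)=-F^*v$ and $s(v)=-\sigma_A(-F^*v)$, and then to identify the active index set at $\bar x\in E_w$.

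\textbf{Step 1: hypotheses of Lemma~\ref{ITDL}.} We check three things.
\begin{itemize}
\item The map $g$ is weak$^*$-to-weak$^*$ continuous, since $F^*$ is the adjoint of a bounded operator.
\item The map $v\mapsto\sigma_A(-F^*v)=\sup_{a\in A}\langle v,-F(a)\rangle$ is a supremum of weak$^*$-continuous functions, hence weak$^*$ lsc. Thus $s$ is usc. It is finite because $A$ is $C_X$-bounded: Remark~\ref{VKYJQ} applies to $v\in K\subseteq C^+$.
\item $\xi$ is continuous, being Lipschitz by Theorem~\ref{YL:thetaprop2}.
\end{itemize}
Lemma~\ref{ITDL} then gives $\partial\xi(\bar x)=\overline{\co}^{w^*}\{-F^*v: v\in K(\bar x)\}$, where $K(\bar x)$ is the active set.

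\textbf{Step 2: identifying the active set.} Since $\bar x\in E_w$, Lemma~\ref{YL:thetaprop}(iii) gives $\theta(\bar x)=0$. Hence $v\in K$ is active iff $\langle F^*v,\bar x\rangle+\sigma_A(-F^*v)=0$, i.e. iff $\langle F^*v,a-\bar x\rangle\ge 0$ for all $a\in A$. Every term in the defining minimum is $\ge 0$, since $\bar x\in A$, so activity means exactly equality. By Remark~\ref{RtDFxl}, this says $-F^*v\in N_A(\bar x)$. Negating gives the first equality,
\[
-\partial\xi(\bar x)=\overline{\co}^{w^*}\{F^*v: v\in K,\ F^*v\in -N_A(\bar x)\}.
\]

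\textbf{Step 3: the remaining equalities.} The set $W=\{v\in K: F^*v\in-N_A(\bar x)\}$ is convex and weak$^*$-closed. It is the intersection of $K$ with the preimage under the weak$^*$-continuous map $F^*$ of the weak$^*$-closed convex cone $-N_A(\bar x)$. Hence $W$ is weak$^*$-compact, and so $F^*(W)$ is weak$^*$-compact and convex, and the closed convex hull is superfluous. Moreover $F^*(W)=F^*(K)\cap(-N_A(\bar x))$ directly from the definitions, which gives the last expression.

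For the middle expression, first note that $F^*(K)$ is weak$^*$-compact convex and contains $F^*(S(C^+))$. Conversely, $F^*(\co S(C^+))$ is weak$^*$-dense in $F^*(K)$ by continuity of $F^*$. Therefore $\overline{\co}^{w^*}(F^*(S(C^+)))=F^*(K)$.

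The only delicate point is Step~1, specifically the upper semicontinuity and finiteness of $s$ needed for Lemma~\ref{ITDL}. The finiteness is exactly where $C_X$-boundedness of $A$ enters. The compactness bookkeeping in Step~3 is routine.
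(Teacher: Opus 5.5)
Your proposal is correct and follows essentially the same route as the paper. You write $\xi=\max_{v\in K}h_v$ and apply Lemma~\ref{ITDL} with $g(v)=-F^*v$ and $s(v)=-\sigma_A(-F^*v)$, getting finiteness from $C_X$-boundedness via Remark~\ref{VKYJQ}; you then identify the active set at $\bar x$ as $\{v\in K: F^*v\in -N_A(\bar x)\}$ and use $F^*(K)=\overline{\co}^{w^*}(F^*(S(C^+)))$. Your Step~3 observation, that $F^*(W)$ is already weak$^*$-compact and convex so the closed convex hull is redundant and $F^*(W)=F^*(K)\cap(-N_A(\bar x))$ holds directly, is a slightly more economical version of the paper's two-sided inclusion argument.
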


\begin{proof}
	By Theorem \ref{YDOBS}, we have
$$
	\theta(x) = \min_{v \in K} \bigl( \langle F^*v, x \rangle + \sigma_A(-F^*v) \bigr),  
$$
 and so 
\begin{equation}\label{XCB1}
	\xi(x) = -\theta(x) = \max_{v \in K} \bigl( -\langle F^*v, x \rangle - \sigma_A(-F^*v) \bigr).  
 \end{equation}
	For each $v \in K$, define
	\[
	h_v(x) := \langle -F^*v, x \rangle - \sigma_A(-F^*v), \quad x \in X.
	\]
	Then,  it follows from  (\ref{XCB1})  that
	\[
	\xi(x) = \max_{v \in K} h_v(x).
	\]
	
	For $\bar{x} \in E_w$, we have $\theta(\bar{x}) = 0$, hence $\xi(\bar{x}) = 0$. Define the active index set
	\[
	K(\bar{x}) := \{ v \in K : h_v(\bar{x}) = \xi(\bar{x}) = 0 \}.
	\]
		For any $v \in K$, set $g(v) = -F^*v$ and $s(v) = - \sigma_A(-F^*v)$.   Since $A$ is $C_X$-bounded, it follows from  $K \subseteq C^+$ and   Remark   \ref{VKYJQ}    that $s(v) \in \mathbb{R}$ for any $v \in K$.
		  It is readily seen that   $g$ is continuous   and   $s$ is upper semicontinuous with respect to the weak$^*$-topology.
		By Lemma \ref{ITDL}, we have
$	\partial \xi(\bar{x}) = \overline{\operatorname{co}}^{w^*} \bigl\{ -F^*v : v \in K(\bar{x}) \bigr\}, $
and so 
\begin{equation}\label{XCB2}
	- \partial \xi(\bar{x}) = \overline{\operatorname{co}}^{w^*} \bigl\{ F^*v : v \in K(\bar{x}) \bigr\}.  
 \end{equation}
	
	By definition of $K(\bar{x})$, $v \in K(\bar{x})$ iff $v \in K$ and
\begin{equation}\label{XCB3}
	0 = \langle F^*v, \bar{x} \rangle + \sigma_A(-F^*v).  
 \end{equation}
	The support function gives
	\[
	\sigma_A(-F^*v) = \sup_{a \in A} \langle -F^*v, a \rangle = - \inf_{a \in A} \langle F^*v, a \rangle.
	\]
	Substituting into  (\ref{XCB3}) yields
\begin{equation}\label{XCB4}
	0 = \langle F^*v, \bar{x} \rangle - \inf_{a \in A} \langle F^*v, a \rangle
	\quad \Longleftrightarrow \quad
	\langle F^*v, \bar{x} \rangle = \inf_{a \in A} \langle F^*v, a \rangle.  
 \end{equation}
		Equation (\ref{XCB4}) means that $\bar{x}$ is a global minimizer of the continuous linear functional $a \mapsto \langle F^*v, a \rangle$ on the closed convex set $A$. This is equivalent to
$	-F^*v \in N_A(\bar{x}).$
	Combining with (\ref{XCB2}), we have
\begin{equation}\label{XCB++}
	-\partial \xi(\bar{x}) = \overline{\operatorname{co}}^{w^*} \bigl\{ F^*v : v \in K,\; F^*v \in -N_A(\bar{x}) \bigr\}.
 \end{equation}
	
	Next, we prove that
\begin{equation}\label{XCB5}
	F^*(K) = \overline{\operatorname{co}}^{w^*}\bigl( F^*(S(C^{+})) \bigr).  
 \end{equation}
	First we show $F^*(K) \subseteq \overline{\operatorname{co}}^{w^*}\bigl( F^*(S(C^{+})) \bigr)$.
In fact,	since $F^*$ is linear, for any finite convex combination, we have
	\[
	F^*\left( \sum_{i=1}^n \lambda_i y_i^* \right) = \sum_{i=1}^n \lambda_i F^*(y_i^*), \quad \lambda_i \ge 0,\ \sum_{i=1}^n \lambda_i = 1, \;  y_i^* \in S(C^+).
	\]
	This means
\begin{equation}\label{XCB6}
	F^*(\operatorname{co}(S(C^+))) = \operatorname{co}( F^*(S(C^+)) ).  
 \end{equation}
	Take any $z \in K = \overline{\operatorname{co}}^{w^*}(S(C^+))$. By definition of the weak$^*$-closure, there exists a net $\{z_\alpha\}_{\alpha \in I} \subseteq \operatorname{co}(S(C^+))$ such that $z_\alpha \xrightarrow{w^*} z$.
	For each $\alpha$, by (\ref{XCB6}) we have $F^*(z_\alpha) \in \operatorname{co}(F^*(S(C^+)))$. Since $F^*$ is weak$^*$-to-weak$^*$ continuous, we get 
	\[
	F^*(z_\alpha) \xrightarrow{w^*} F^*(z) \quad \text{in } X^*.
	\]
 This means that
$	F^*(z) \in \overline{\operatorname{co}}^{w^*}\bigl( F^*(S(C^+)) \bigr).$
	Since $z \in K$ is arbitrary, we obtain 
	\begin{equation}\label{XCB7}
	F^*(K) \subseteq \overline{\operatorname{co}}^{w^*}\bigl( F^*(S(C^+)) \bigr).
	 \end{equation}
	 
	Since $K = \overline{\operatorname{co}}^{w^*}(S(C^+))$ is   a   weak$^*$-compact convex set, and $F^*$ is   linear and weak$^*$-to-weak$^*$ continuous, the set $F^*(K)$ is weak$^*$-compact and convex. This together with $F^*(S(C^+)) \subseteq F^*(K)$  implies that
$	\overline{\operatorname{co}}^{w^*} \bigl( F^*(S(C^+)) \bigr) \subseteq F^*(K).$   Combining this with (\ref{XCB7}), we get that (\ref{XCB5})  holds.

	Since $K = \overline{\operatorname{co}}^{w^*}(S(C^{+}))$, and $K(\bar{x}) \subseteq K$, using (\ref{XCB5}),
	\[
	\{ F^*v : v \in K(\bar{x}) \} \subseteq F^*(K) = \overline{\operatorname{co}}^{w^*}\bigl( F^*(S(C^{+})) \bigr).
	\]
	From (\ref{XCB2}) we obtain
		\begin{equation}\label{XCB8}
	- \partial \xi(\bar{x}) \subseteq \overline{\operatorname{co}}^{w^*} \bigl( F^*(S(C^{+})) \bigr).  
	 \end{equation}
	
Noting that   $-N_A(\bar{x})$ is convex and weak$^*$-closed, 	it is easy to see that $ \overline{\operatorname{co}}^{w^*} \bigl\{ F^*v : v \in K,\; F^*v \in -N_A(\bar{x}) \bigr\}    \subseteq -N_A(\bar{x})$. 	This together with  (\ref{XCB++})  yields that   $-\partial \xi(\bar{x}) \subseteq -N_A(\bar{x})$.
		Combining with (\ref{XCB8}), we have
	\[
	- \partial \xi(\bar{x}) \subseteq \bigl( -N_A(\bar{x}) \bigr) \cap \overline{\operatorname{co}}^{w^*} \bigl( F^*(S(C^{+})) \bigr).
	\]

	Take any $x^* \in \bigl( -N_A(\bar{x}) \bigr) \cap \overline{\operatorname{co}}^{w^*} \bigl( F^*(S(C^+)) \bigr)$.
	By (\ref{XCB5}), we get  $x^* \in F^*(K)$, so there exists $v \in K$ with $x^* = F^*v$.
	Since $F^*v  =    x^* \in -N_A(\bar{x})$,     we have $x^* \in \bigl\{ F^*v : v \in K,\; F^*v \in -N_A(\bar{x}) \bigr\}$.   This together with  (\ref{XCB++})  implies that  $x^* \in   -\partial \xi(\bar{x}).$
	 	This proves
	\[
	\bigl( -N_A(\bar{x}) \bigr) \cap \overline{\operatorname{co}}^{w^*} \bigl( F^*(S(C^+)) \bigr) \subseteq -\partial \xi(\bar{x}).
	\]
\end{proof}

\begin{remark}  \label{RXExcQ}  
	It follows from   $\xi = -\theta$  that  $	\partial^+ \theta(x) =  	- \partial \xi({x}) $.   Let $A  \subseteq X$ be $C_X$-bounded.    Then by Theorem \ref{BSXEQ}, we obtain that if $\bar{x} \in E_w$, then 
		\[
		\partial^+ \theta(x) = \overline{\operatorname{co}}^{w^*} \bigl\{ F^*v : v \in K,\; F^*v \in -N_A(\bar{x}) \bigr\} = \bigl( -N_A(\bar{x}) \bigr) \cap \overline{\operatorname{co}}^{w^*} \bigl( F^*(S(C^+)) \bigr) = \bigl( -N_A(\bar{x}) \bigr) \cap F^*(K).
	\]

  \end{remark}

\section{Error bounds   of the merit   function  }\label{sec:errorbounds}

 In this section we undertake a comprehensive study of error bounds for \(\theta\). We establish the equivalence of thirteen different characterizations, ranging from linear regularity of the pair \(\{A,\widehat A\}\), through  the  global slope condition, to perturbation stability and sublevel‐set Hausdorff continuity. These equivalences unify and extend previous results in the literature and set the stage for the directional‐derivative characterizations developed in later sections.  

\begin{lemma}[Ekeland's variational principle]\label{ELdaz}
	Let $(M, d)$ be a complete metric space and $f: M \to \mathbb{R} \cup \{+\infty\}$ a proper lower semicontinuous function bounded below. If $x_0 \in \operatorname{dom} f$ and $\varepsilon > 0,\ \lambda > 0$ satisfy
	\[
	f(x_0) \le \inf_{x \in M} f(x) + \varepsilon,
	\]
	then there exists $\bar{x} \in M$ such that:
	\begin{enumerate}
		\item $f(\bar{x}) \le f(x_0)$;
		\item $d(x_0, \bar{x}) \le \lambda$;
		\item For all $x \in M \setminus \{\bar{x}\}$,
		\[
		f(x) > f(\bar{x}) - \frac{\varepsilon}{\lambda} \, d(x, \bar{x}) .
		\]
	\end{enumerate}
\end{lemma}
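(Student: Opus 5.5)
This is the classical Ekeland variational principle, and I would prove it by the standard iterative construction on the complete metric space $(M,d)$. Set $\alpha:=\varepsilon/\lambda$. For $x\in\operatorname{dom} f$ define
\[
S(x):=\{y\in M:\ f(y)+\alpha\, d(x,y)\le f(x)\}.
\]
Each $S(x)$ contains $x$. It is closed, because $f$ is lower semicontinuous and $d(x,\cdot)$ is continuous. It is also transitive in the sense that $y\in S(x)$ implies $S(y)\subseteq S(x)$, by the triangle inequality: $f(z)+\alpha d(x,z)\le f(z)+\alpha d(y,z)+\alpha d(x,y)\le f(y)+\alpha d(x,y)\le f(x)$.

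Starting from $x_0$, I construct a sequence $x_{n+1}\in S(x_n)$ with
\[
f(x_{n+1})\le \inf_{S(x_n)} f+2^{-(n+1)}.
\]
This is possible since $f$ is bounded below. Then $S(x_{n+1})\subseteq S(x_n)$, and for $y\in S(x_{n+1})$ we get
\[
\alpha\, d(x_{n+1},y)\le f(x_{n+1})-f(y)\le f(x_{n+1})-\inf_{S(x_n)}f\le 2^{-(n+1)},
\]
so $\operatorname{diam} S(x_{n+1})\to 0$. The sets form a nested sequence of nonempty closed sets with diameters tending to zero, so by completeness (Cantor's intersection theorem) $\bigcap_n S(x_n)=\{\bar x\}$ for a single point $\bar x$, and $x_n\to\bar x$.

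Now I verify the three assertions.
\begin{enumerate}
\item Since $\bar x\in S(x_0)$, we have $f(\bar x)+\alpha d(x_0,\bar x)\le f(x_0)$, which gives $f(\bar x)\le f(x_0)$.
\item The same inequality gives $\alpha d(x_0,\bar x)\le f(x_0)-f(\bar x)\le f(x_0)-\inf_M f\le\varepsilon$. Hence $d(x_0,\bar x)\le\lambda$; this is exactly where the hypothesis $f(x_0)\le\inf f+\varepsilon$ enters.
\item Take $x\ne\bar x$. Since $\bar x\in S(x_n)$ for every $n$, transitivity gives $S(\bar x)\subseteq\bigcap_n S(x_n)=\{\bar x\}$. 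Hence $x\notin S(\bar x)$, i.e. $f(x)+\alpha d(\bar x,x)>f(\bar x)$. This is the strict inequality $f(x)>f(\bar x)-\frac{\varepsilon}{\lambda}d(x,\bar x)$.
\end{enumerate}

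The main point requiring care is the diameter estimate, which relies on choosing $x_{n+1}$ nearly minimizing $f$ over $S(x_n)$ rather than over $M$. The remaining steps are routine.
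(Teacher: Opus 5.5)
The paper gives no proof of this lemma. It quotes Ekeland's variational principle as a classical tool and uses it only in the step (viii)$\Rightarrow$(i) of Theorem~\ref{XLWCJK}, with $f=\theta|_A$ continuous and real-valued on the complete space $A$. Your argument is the standard nested-sets proof, and it is correct and complete.

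The main steps all check out:
\begin{itemize}
\item The sets $S(x_n)$ are well defined, because $f(x_{n+1})\le f(x_n)<+\infty$ keeps every $x_n$ in $\operatorname{dom} f$.
\item They are nonempty, closed and nested.
\item Near-minimization over $S(x_n)$ gives $S(x_{n+1})\subseteq \{y: d(x_{n+1},y)\le 2^{-(n+1)}/\alpha\}$, so the diameters tend to zero and Cantor's theorem applies.
\item The strict inequality in (3) follows from $S(\bar x)=\{\bar x\}$ exactly as you say. Here $\bar x\in\operatorname{dom} f$, and the case $f(x)=+\infty$ is trivial.
\end{itemize}
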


The set-valued map $\Phi:[0,+\infty)\rightrightarrows A$ is defined by
$$\Phi(t)=\{x\in A:\theta(x)\le t\}, \quad  \forall t \in [0,+\infty).$$

For any \(x\in A\) with \(\theta(x)>0\),  the  global slope is defined by
\[
m(x):=\sup_{\substack{y\in A\\ \theta(y)<\theta(x)}}\frac{\theta(x)-\theta(y)}{\|x-y\|}\in[0,+\infty].
\]

\begin{theorem} \label{XLWCJK}  Consider the following statements:
	\begin{enumerate}
		\item[(i)] $\theta$ has an error bound on $A$: there exists $\tau>0$ such that
		\begin{equation}\label{EBAQJ} 
			d(x,E_w)\le\tau \theta(x), \quad  \forall x\in A.
		\end{equation}		 
		\item[(ii)] $\{A,\widehat{A}\}$ is linearly regular: there exists $\kappa>0$ such that
				 $$d_{A\cap\widehat{A}}(x)\le\kappa\max\{d_A(x),d_{\widehat{A}}(x)\},   \quad  \forall     x\in X. $$  
		 
		\item[(iii)] $d_{\widehat{A}}$ has an error bound on $A$: there exists $\tau>0$ such that $d(x,E_w)\le\tau d_{\widehat{A}}(x)$ for all $x\in A$.
		\item[(iv)]  There exist  \(\alpha > 0\) and \(\beta \in (0,1)\) such that for every \(x \in A\) one can find \(y \in A\) satisfying
		\begin{equation}\label{EYZXJ} 
		\|x - y\| \le \alpha\,\theta(x) \quad\text{and}\quad \theta(y) \le \beta\,\theta(x).
		\end{equation}	
		\item[(v)]  There exist \(\kappa > 0\) such that  
		\begin{equation}\label{EBkkQJ} 
		d(x+e, E_w) \le \kappa (\theta(x) + \|e\|),  \quad  \forall x \in A, \;    \forall e \in X.
			\end{equation}	
		
		\item[(vi)]  There exist $\gamma>0$ such that  
		\begin{equation}\label{EBkXJ} 
		d(x,E_w)\le \gamma\bigl(\max\{\theta(x),0\}+d_A(x)\bigr),\quad \forall x\in X .
				\end{equation}	
		
		\item[(vii)]  There exists   \(\kappa > 0\) such that  
			$$			d(x, E_w) \le \kappa \, \zeta(x), \quad \forall x \in X, 		$$
			where  $\zeta(x) := \inf_{a \in A} \bigl( \theta(a) + \|x - a\| \bigr)$.

			\item[(viii)] There exists   \(c>0\) such that for each \(x\in A\setminus E_w\), one can find \(y\in A\) satisfying
			\begin{equation}\label{EevL}
				y\neq x \quad\text{and}\quad
				\theta(y)\le \theta(x)-c\|x-y\| .
	 \end{equation}
			
					\item[(ix)]   There exists   \(\gamma >0\) such that
				\[
				m(x)\ge \gamma,\quad\forall x\in A\setminus E_w.
				\]
			
			 	\item[(x)]  There exists $c>0$ such that    
 	$$ d \bigl(F(x),\operatorname{WMin}(F(A),C)\bigr)     \le c \; \theta(x),   \quad  \forall x\in A.$$ 
		
		\item[(xi)]  Asymptotic   condition:
		\[
		\liminf_{\substack{x\xrightarrow{A} E_w\\ x\notin E_w}}\frac{d_{\widehat{A}}(x)}{d(x,E_w)}>0.
		\]
		where $x\xrightarrow{A} E_w$ means $x\in A$ and $d(x,E_w)\to 0$.
		
		\item[(xii)] There exist \(t_0 > 0\) and \(\beta > 0\) such that 
		\[
		d_H(\Phi(t), E_w) \le \beta t,  \quad  \forall t \in [0, t_0].
		\]

	\item[(xiii)] There exist $\kappa>0$ and an open set $U\supseteq E_w$ such that \begin{equation}\label{EBkmJ} 
			d(x,\Phi(0))\le\kappa\,d(0,\Phi^{-1}(x)),       \quad  \forall       x\in U\cap A. 	
			\end{equation}	 
	\end{enumerate}
	 	Then the following hold:
		\begin{enumerate}
		\item The statements (i)--(ix) are equivalent, (i) $\Rightarrow$ (x), (iii) $\Rightarrow$ (xi),  (i) $\Rightarrow$ (xii) and (i) $\Rightarrow$ (xiii). 
			\item  If there exist $\mu>0$ such that $\|F(z)\|\ge\mu\|z\|$ for any $z\in A-A$,
	then (x) $\Rightarrow$ (i).
			\item  If $A$ is bounded, then (xii) $\Rightarrow$ (i).
	 \item  	If $A$ is compact, then (xi) $\Rightarrow$ (iii) and (xiii) $\Rightarrow$ (i).  
	\end{enumerate}
\end{theorem}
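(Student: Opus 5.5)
The proof will run through a hub: I first establish (i)$\Leftrightarrow$(iii)$\Leftrightarrow$(ii), then attach the remaining statements to (i). Since $E_w=A\cap\widehat A$, the two-sided estimate of Lemma \ref{YL:thetaprop}(iv), $r\,d_{\widehat A}\le\theta\le\|F\|\,d_{\widehat A}$ on $A$, immediately gives (i)$\Leftrightarrow$(iii), with constants changing by the factors $r$ and $\|F\|$. For (iii)$\Rightarrow$(ii), take $x\in X$ and $a\in A$ with $\|x-a\|\le 2d_A(x)$. Then
\[
d_{E_w}(x)\le \|x-a\|+\tau d_{\widehat A}(a)\le \|x-a\|+\tau\bigl(d_{\widehat A}(x)+\|x-a\|\bigr),
\]
which is bounded by a constant times $\max\{d_A,d_{\widehat A}\}$. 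The converse (ii)$\Rightarrow$(iii) follows by restricting to $x\in A$. The same pattern handles (i)$\Rightarrow$(vi) and (i)$\Rightarrow$(vii): project approximately onto $A$ and use the $\|F\|$-Lipschitz continuity of $\theta$ (Theorem \ref{YL:thetaprop2}). For (vi), note that $\max\{\theta,0\}$ is also Lipschitz. For (vii), use $d(x,E_w)\le \|x-a\|+\tau\theta(a)$ and take the infimum over $a$. Similarly, (i)$\Rightarrow$(v) is $d(x+e,E_w)\le d(x,E_w)+\|e\|$. The reverse implications (v),(vi),(vii)$\Rightarrow$(i) follow by specializing to $e=0$, to $x\in A$, or to $\zeta(x)\le\theta(x)$ for $x\in A$, respectively.

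The descent-type conditions come next. For (i)$\Rightarrow$(iv), given $x\in A$, choose $y\in E_w$ with $\|x-y\|\le 2\tau\theta(x)$. Then $\theta(y)=0$, so (iv) holds with $\alpha=2\tau$ and any $\beta\in(0,1)$. For (iv)$\Rightarrow$(i), iterate: this gives $x_{k+1}$ with $\|x_{k+1}-x_k\|\le\alpha\beta^k\theta(x)$ and $\theta(x_k)\le\beta^k\theta(x)$. The sequence is Cauchy, its limit lies in $A$ (closed), and by continuity $\theta=0$ there, so the limit is in $E_w$ by Lemma \ref{YL:thetaprop}(iii). Hence $d(x,E_w)\le \frac{\alpha}{1-\beta}\theta(x)$. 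The implications (i)$\Rightarrow$(viii)$\Rightarrow$(ix) are straightforward: take $y\in E_w$ nearly closest, which gives $c=1/(2\tau)$, and then the sup defining $m(x)$ is at least $c$.

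The main obstacle is (ix)$\Rightarrow$(i), which I will prove with Ekeland's principle (Lemma \ref{ELdaz}) applied to $\theta$ on the complete metric space $A$; here $\theta\ge0$ is continuous. Fix $x_0\in A\setminus E_w$ and $\lambda$ slightly larger than $\theta(x_0)/\gamma'$ with $\gamma'<\gamma$. Take $\varepsilon=\theta(x_0)$, which is admissible since $\inf_A\theta\ge0$. Ekeland gives $\bar x\in A$ with $\|\bar x-x_0\|\le\lambda$ and $\theta(y)>\theta(\bar x)-\gamma'\|y-\bar x\|$ for all $y\neq\bar x$. If $\theta(\bar x)>0$, then $m(\bar x)\le\gamma'<\gamma$, which contradicts (ix). So $\bar x\in E_w$, and $d(x_0,E_w)\le\lambda$. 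Letting $\gamma'\uparrow\gamma$ yields $\tau=1/\gamma$.

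The remaining implications are easier. For (i)$\Rightarrow$(x), take $a\in E_w$ nearly closest and use $F(a)\in\operatorname{WMin}(F(A),C)$, so that $d(F(x),\operatorname{WMin})\le\|F\|\,\|x-a\|$. For (iii)$\Rightarrow$(xi), the ratio is at least $1/\tau$. For (i)$\Rightarrow$(xii), use $E_w\subseteq\Phi(t)$, which gives $d_H(\Phi(t),E_w)=\sup_{\Phi(t)}d(\cdot,E_w)\le\tau t$. For (i)$\Rightarrow$(xiii), $\Phi^{-1}(x)=[\theta(x),\infty)$, so $d(0,\Phi^{-1}(x))=\theta(x)$ and $\Phi(0)=E_w$, with $U=X$. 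Part 2 uses the assumption $\|F(z)\|\ge\mu\|z\|$ on $A-A$: if $F(x)$ is close to some $F(a)$ with $a\in E_w$, then $\|x-a\|\le\mu^{-1}\|F(x)-F(a)\|$. Here I need each weakly efficient point to be attained by $F(a)$ with $a\in E_w$, which holds by definition.

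For part 3, boundedness of $A$ gives $D:=\sup_A d(\cdot,E_w)<\infty$. On the set $\{\theta\le t_0\}$ we get $d\le\beta\theta$, and on $\{\theta>t_0\}$ we get $d\le (D/t_0)\,\theta$. The same splitting, combined with the local inequality from (xiii) on $U\cap A$, handles (xiii)$\Rightarrow$(i) under compactness. In that case $\theta$ attains a positive minimum on the compact set $A\setminus U$, so $d\le (\operatorname{diam}A/\min_{A\setminus U}\theta)\,\theta$ there. Finally, for (xi)$\Rightarrow$(iii) under compactness, argue by contradiction. Suppose there are $x_k\in A\setminus E_w$ with $d_{\widehat A}(x_k)/d(x_k,E_w)\to0$. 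By compactness, pass to a subsequence $x_k\to x$. If $x\notin E_w$, then $d_{\widehat A}(x)=0$, so $x\in\widehat A$ (closed, since $A+\operatorname{int}C_X$ is open) and hence $x\in E_w$, a contradiction. So $d(x_k,E_w)\to0$, which contradicts (xi).
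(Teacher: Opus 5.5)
Your proposal is correct and follows essentially the same route as the paper. It uses the two-sided estimate of Lemma~\ref{YL:thetaprop}(iv) for (i)$\Leftrightarrow$(iii), the geometric iteration for (iv)$\Rightarrow$(i), and Ekeland's principle on the complete space $A$ for the descent and slope conditions. The easy converses follow by specialization, and the boundedness/compactness parts use splitting or compactness arguments.

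The differences are minor and mostly in your favor:
\begin{itemize}
\item You prove (ii)$\Leftrightarrow$(iii) directly rather than citing Liu--Ng--Yang.
\item You get (i)$\Rightarrow$(v) in one line from $d(x+e,E_w)\le d(x,E_w)+\|e\|$.
\item You run the Ekeland cycle as (i)$\Rightarrow$(viii)$\Rightarrow$(ix)$\Rightarrow$(i), contradicting (ix) directly; this gives the constant $1/\gamma$ instead of the paper's $2/\gamma$.
\item For (xi)$\Rightarrow$(iii) you argue by sequential compactness and contradiction, where the paper splits $A$ into the region near $E_w$ and a compact set on which $d_{\widehat A}$ is bounded below. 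The substance is the same.
\end{itemize}
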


\begin{proof}
It follows from Lemma \ref{YL:thetaprop}     that  (i) $\Leftrightarrow$ (iii).
		By \cite[Theorem 4]{LiuNgYang2009},  we get    (ii) $\Leftrightarrow$ (iii).

	\noindent\textbf{(i) $\Rightarrow$ (iv)}.
  Assume  that   there exists $\tau>0$ such that  (\ref{EBAQJ}) holds.   
  Fix an arbitrary \(x \in A\). 
  
  If \(\theta(x) = 0\), then \(x \in E_w\) and taking \(y = x\) fulfills the requirement.
  
  If \(\theta(x) > 0\),  by   (\ref{EBAQJ}),     there exists \(x' \in E_w\) such that
  \[
  \|x - x'\| < d(x,E_w) + \theta(x) \le \tau\theta(x) + \theta(x) = (\tau+1)\theta(x).
  \]
  Set \(y = x'\),   then \(y \in A\) and \(\theta(y) = 0 \le \beta\theta(x)\) for any \(\beta \in (0,1)\). Thus (iv) holds with constant \(\alpha = \tau+1\) and an arbitrary \(\beta \in (0,1)\).

  	\noindent\textbf{(iv) $\Rightarrow$ (i)}.
  Assume there exist \(\alpha>0\) and \(\beta \in (0,1)\) satisfying  (\ref{EYZXJ}). Set \(\tau = \frac{\alpha}{1-\beta}\).
    Fix an arbitrary \(x \in A\). 
    
      If \(\theta(x)=0\), then \(x \in E_w\) and the conclusion is trivial. 
      
      Suppose \(\theta(x) > 0\). Starting from \(x_0 = x\), we recursively construct a sequence \(\{x_n\} \subseteq A\). 
  Assuming \(x_n\) is already obtained, if \(\theta(x_n) = 0\) we stop,  otherwise by condition (\ref{EYZXJ}) there exists \(x_{n+1} \in A\) such that
  \[
  \|x_n - x_{n+1}\| \le \alpha \theta(x_n), \qquad \theta(x_{n+1}) \le \beta \theta(x_n).
  \]
  Iterating these inequalities yields, for all steps,
  \[
  \theta(x_n) \le \beta^n \theta(x), \qquad \|x_n - x_{n+1}\| \le \alpha \beta^n \theta(x).
  \]
  Since \(0 < \beta < 1\), the series \(\sum_{n=0}^\infty \beta^n\) converges, hence \(\{x_n\}\) is a Cauchy sequence. As a closed subset of a Banach space, \(A\) is complete, so there exists a limit point \(\bar{x} \in A\) with \(x_n \to \bar{x}\). 
  By the Lipschitz continuity of \(\theta\),
  \[
  \theta(\bar{x}) = \lim_{n\to\infty} \theta(x_n) \le \lim_{n\to\infty} \beta^n \theta(x) = 0.
  \]
  Since \(\theta\) is nonnegative on \(A\), we obtain \(\theta(\bar{x}) = 0\), i.e., \(\bar{x} \in E_w\).   Then, we have
  \[ d(x,E_w) \le
  \|x - \bar{x}\| \le \sum_{n=0}^{\infty} \|x_n - x_{n+1}\|
  \le \sum_{n=0}^{\infty} \alpha \beta^n \theta(x)
  = \frac{\alpha}{1-\beta}\,\theta(x) = \tau \theta(x).
  \]
  This establishes (i).

	\noindent\textbf{(i) $\Rightarrow$ (v)}.
Suppose there exists \(\tau > 0\)  such that   (\ref{EBAQJ}) holds. Take arbitrary \(x \in A\) and \(e \in X\).

It follows from  \(x \in A\)  that \(d(x+e, A) \le \|x+e - x\| = \|e\|\).
  For any \(\varepsilon > 0\), choose \(a_\varepsilon \in A\) such that
\begin{equation}\label{eTHVDD1}
\|x+e - a_\varepsilon\| \le d(x+e, A) + \varepsilon \le \|e\| + \varepsilon,
\end{equation}
Since  \(\theta\) is Lipschitz continuous on \(X\) with constant \(L = \|F\|\),  by (\ref{eTHVDD1}), we have 
\[
|\theta(x+e) - \theta(a_\varepsilon)| \le L \|(x+e) - a_\varepsilon\| \le L(\|e\| + \varepsilon),
\]
hence
\begin{equation}\label{eTHVDD2}
	\theta(a_\varepsilon) \le \theta(x+e) + L(\|e\| + \varepsilon).
\end{equation}
 We conclude from  \(a_\varepsilon \in A\),  (\ref{EBAQJ})  and  (\ref{eTHVDD2})  that
\begin{equation}\label{eTHVDD3}
	d(a_\varepsilon, E_w) \le \tau \theta(a_\varepsilon) \le \tau\bigl(\theta(x+e) + L(\|e\| + \varepsilon)\bigr).
\end{equation}
  It follows from  (\ref{eTHVDD1})  and  (\ref{eTHVDD3})    that
\[
\begin{aligned}
	d(x+e, E_w) &\le \|x+e - a_\varepsilon\| + d(a_\varepsilon, E_w) \\
	&\le (\|e\| + \varepsilon) + \tau\bigl(\theta(x+e) + L(\|e\| + \varepsilon)\bigr) \\
	&= \tau \theta(x+e) + (1 + \tau L)(\|e\| + \varepsilon).
\end{aligned}
\]
Letting \(\varepsilon \to 0^+\), we obtain
\begin{equation}\label{eq:d-xe}
	d(x+e, E_w) \le \tau \theta(x+e) + (1 + \tau L)\|e\|.
\end{equation}
Since  \(\theta\) is Lipschitz continuous, \(\theta(x+e) \le \theta(x) + L\|e\|\). Substituting into \eqref{eq:d-xe} yields
\[
\begin{aligned}
	d(x+e, E_w) &\le \tau\bigl(\theta(x) + L\|e\|\bigr) + (1 + \tau L)\|e\| \\
	&= \tau \theta(x) + (2\tau L + 1)\|e\|.
\end{aligned}
\]
 Due to  \(\theta(x) \ge 0\) and \(\|e\| \ge 0\), we have
\[
\tau \theta(x) + (2\tau L + 1)\|e\| \le (\tau + 2\tau L + 1)(\theta(x) + \|e\|).
\]
Setting \(\kappa = \tau + 2\tau L + 1\), we obtain
\[
d(x+e, E_w) \le \kappa (\theta(x) + \|e\|),
\]
  which means that  (\ref{EBkkQJ}) holds.

	\noindent\textbf{(v) $\Rightarrow$ (i)}.
Suppose  that There exist \(\kappa > 0\) such that  (\ref{EBkkQJ}) holds.   
Take any \(x \in A\) and set \(e = 0\). From (\ref{EBkkQJ}) we immediately obtain
\[
d(x, E_w) = d(x+0, E_w) \le \kappa (\theta(x) + 0) = \kappa \theta(x).
\]

	\noindent\textbf{(i) $\Rightarrow$ (vi)}.
Assume  that   there exists $\tau>0$ such that  (\ref{EBAQJ}) holds.
Set $L=\|F\|$, and define
\[
\gamma := \max\{\,\tau,\;1+\tau L\,\}.
\]
Let  $x\in X$      be arbitrary.   For   any    $\varepsilon>0$,  there exists $a_\varepsilon\in A$ such that
 \begin{equation}\label{EvgyB1}	\|x-a_\varepsilon\| < d_A(x)+\varepsilon  .        
 \end{equation}
 It follows from    (\ref{EBAQJ})   and  (\ref{EvgyB1})    that   
 \begin{equation}\label{EvgyB2}
 	d(x,E_w) \le \|x-a_\varepsilon\| + d(a_\varepsilon,E_w) < d_A(x)+\varepsilon + \tau\,\theta(a_\varepsilon) .
 \end{equation}
Since  \(\theta\) is Lipschitz continuous on \(X\) with constant \(L = \|F\|\),  due to (\ref{EvgyB1}),  we have
\begin{equation} \label{EvgyB3}
	\theta(a_\varepsilon) \le \theta(x) + L\|a_\varepsilon-x\| 
	< \theta(x) + L\bigl(d_A(x)+\varepsilon\bigr).  
\end{equation}

Now we distinguish two cases according to the sign of $\theta(x)$.

\noindent\textbf{Case 1: $\theta(x)\ge 0$.}
Then $\max\{\theta(x),0\}=\theta(x)$. Substituting (\ref{EvgyB3}) into (\ref{EvgyB2}),
\[
\begin{aligned}
	d(x,E_w) &< d_A(x)+\varepsilon + \tau\bigl(\theta(x)+L(d_A(x)+\varepsilon)\bigr) \\
	&= \tau\theta(x) + (1+\tau L)d_A(x) + (1+\tau L)\varepsilon.
\end{aligned}
\]
Since $\tau\le\gamma$ and $1+\tau L\le\gamma$, and $\theta(x)\ge0,\;d_A(x)\ge0$, we have
\[
\tau\theta(x) + (1+\tau L)d_A(x) \le \gamma\theta(x) + \gamma d_A(x) = \gamma\bigl(\theta(x)+d_A(x)\bigr).
\]
Thus
\[
d(x,E_w) < \gamma\bigl(\theta(x)+d_A(x)\bigr) + (1+\tau L)\varepsilon.
\]

\noindent\textbf{Case 2: $\theta(x)<0$.}
Then $\max\{\theta(x),0\}=0$. Due to (\ref{EvgyB3})  and   $\theta(x)<0$, we have
$\theta(a_\varepsilon) < L\bigl(d_A(x)+\varepsilon\bigr)$.
Substituting into (\ref{EvgyB2}), we have
\[
\begin{aligned}
	d(x,E_w) &< d_A(x)+\varepsilon + \tau \cdot L\bigl(d_A(x)+\varepsilon\bigr) \\
	&= (1+\tau L)d_A(x) + (1+\tau L)\varepsilon.
\end{aligned}
\]
Since $1+\tau L\le\gamma$ and in this case $\max\{\theta(x),0\}+d_A(x)=d_A(x)$, we obtain
\[
d(x,E_w) < \gamma\bigl(\max\{\theta(x),0\}+d_A(x)\bigr) + (1+\tau L)\varepsilon.
\]

Combining both cases, for any $\varepsilon>0$ we have
\[
d(x,E_w) < \gamma\bigl(\max\{\theta(x),0\}+d_A(x)\bigr) + (1+\tau L)\varepsilon.
\]
Letting $\varepsilon\to0^+$, we get
\[
d(x,E_w) \le \gamma\bigl(\max\{\theta(x),0\}+d_A(x)\bigr).
\]
 This means that (\ref{EBkXJ}) is true.

 	\noindent\textbf{(vi) $\Rightarrow$ (i)}.
 Assume that  there exists $\gamma>0$ such that (\ref{EBkXJ}) is satisfied.
  Take any $x\in A$. Then $d_A(x)=0$  and   $\theta(x)\ge0$, and so $\max\{\theta(x),0\}=\theta(x)$. Substituting into (\ref{EBkXJ}) immediately yields
 \[
 d(x,E_w)\le \gamma\,\theta(x),\quad \forall x\in A.
 \]

  	\noindent\textbf{(i) $\Rightarrow$ (vii)}.
  Suppose there exists \(\tau > 0\) such that   (\ref{EBAQJ}) is true.
 Take an arbitrary \(x \in X\). For any \(a \in A\),   it follows from  (\ref{EBAQJ})  that
 \[
 d(x, E_w) \le \|x - a\| + d(a, E_w) \le \|x - a\| + \tau \, \theta(a).
 \]
 Set \(\kappa := \max\{\tau, 1\}\). Then  
$  \|x - a\| + \tau \, \theta(a) \le \kappa \bigl( \|x - a\| + \theta(a) \bigr),$  and so
   $$d(x, E_w) \le \kappa (\|x - a\| + \theta(a)),  \quad  \forall a \in A.  $$
 Taking the infimum over \(a \in A\) yields  
 \[
 d(x, E_w) \le \kappa \inf_{a \in A} \bigl( \theta(a) + \|x - a\| \bigr) = \kappa \, \zeta(x).
 \]
 Since \(x \in X\) is arbitrary, this proves (vii).

  	\noindent\textbf{(vii) $\Rightarrow$ (i)}.
 Suppose there exists \(\kappa > 0\) such that 
 \begin{equation}\label{ErtB1}
 	d(x, E_w) \le \kappa \, \zeta(x), \quad  \forall x \in X.
 \end{equation}
 Take an arbitrary \(x \in A\). From the definition of \(\zeta\) we have  
 \[
\zeta(x) = \inf_{a \in A} \bigl( \theta(a) + \|x - a\| \bigr) \le \theta(x) + \|x - x\| = \theta(x).
 \]
 Substituting this into    (\ref{ErtB1})   gives  
 \[
 d(x, E_w) \le \kappa \, \theta(x), \quad \forall x \in A.
 \]

	 		\noindent\textbf{(i) $\Rightarrow$ (ix)}.
	 			 		Assume that there exists \(\tau>0\) such that   (\ref{EBAQJ}) is true.
	 	 	 		Take an arbitrary \(x\in A\setminus E_w\). Then \(\theta(x)>0\)  and	$	d:=d(x,E_w)>0 .$
	 		 For any \(\varepsilon>0\), there exists \(y_\varepsilon\in E_w\) satisfying
	 		\begin{equation}\label{Eapprox1}
	 			\|x-y_\varepsilon\|<d+\varepsilon .
	 		\end{equation}
	 		Since \(y_\varepsilon\in E_w\), we have \(\theta(y_\varepsilon)=0<\theta(x)\).  so \(y_\varepsilon\) belongs to the set over which the supremum defining \(m(x)\) is taken.   It follows from  (\ref{Eapprox1})   and    \(\theta(x)>0\)  that
	 		\begin{equation}\label{Eapprox2}
	 		\frac{\theta(x)-\theta(y_\varepsilon)}{\|x-y_\varepsilon\|}=\frac{\theta(x)}{\|x-y_\varepsilon\|}>\frac{\theta(x)}{d+\varepsilon}.
	 		\end{equation}
	 		 From (\ref{EBAQJ}) we obtain \(\theta(x)\ge d/\tau\), and so
	 	\begin{equation}\label{Eapprox3}
	 		\frac{\theta(x)}{d+\varepsilon}\ge\frac{d/\tau}{d+\varepsilon}=\frac{1}{\tau}\cdot\frac{d}{d+\varepsilon}.
	 		\end{equation}
	 		Therefore, for this \(y_\varepsilon\),  we conclude from  (\ref{Eapprox2})  and  (\ref{Eapprox3})  that
	 		\[
	 		m(x)\ge\frac{\theta(x)-\theta(y_\varepsilon)}{\|x-y_\varepsilon\|}>\frac{1}{\tau}\cdot\frac{d}{d+\varepsilon}.
	 		\]
	 		Letting \(\varepsilon\to0^{+}\), the right-hand side tends to \(\gamma:=\frac{1}{\tau}\),  hence \(m(x)\ge \gamma\). Since \(x\in A\setminus E_w\) was arbitrary, this proves (ix).

	 			\noindent\textbf{(ix) $\Rightarrow$ (viii)}.
	 	Suppose   that there exists   \(\gamma >0\) such that
	 	\[
	 	m(x)\ge \gamma,\quad \forall x\in A\setminus E_w.
	 	\]
	 	Set \(c :=\dfrac{\gamma}{2}>0\). For any \(x\in A\setminus E_w\), since \(m(x)\ge \gamma > c\), by the definition of supremum, there exists \(y\in A\) satisfying \(\theta(y)<\theta(x)\) and
	 	\[
	 	\frac{\theta(x)-\theta(y)}{\|x-y\|} > c.
	 	\]
	 	Rearranging yields \(\theta(y) < \theta(x) - c \|x-y\|\).   It follows from \(\theta(y)<\theta(x)\)  that    \(y\neq x\). Therefore (viii) holds (with constant \(c =\dfrac{\gamma}{2}\)).
	 	
	\noindent\textbf{(viii) $\Rightarrow$ (i)}.
	 		Assume that  there exists   \(c>0\) such that     (\ref{EevL})   holds.
	 			We will prove that for all \(x\in A\),
	 		\[
	 		d(x,E_w)\le \frac{1}{c}\,\theta(x).
	 		\]
	 		Suppose, to the contrary, that there exists \(x_0\in A\) such that  
	 		\begin{equation}\label{Econtra1}
	 			d(x_0,E_w)>\frac{1}{c}\,\theta(x_0).
	 		\end{equation}
	 	 This yields  that	 \(\theta(x_0)>0\). 
	 		Consider the restriction \(f:=\theta|_A\colon A\to[0,\infty)\) on the complete metric space \(A\) (a closed subset of a Banach space). It is clear that \(f\) is continuous  and \(\inf_{x \in A} f(x)=0\).  
	 		Set
	 		\[
	 		\varepsilon:=f(x_0)=\theta(x_0)>0,\qquad \lambda:=\frac{\varepsilon}{c}=\frac{\theta(x_0)}{c}>0 .
	 		\]
	 		Then \(f(x_0)\le \inf_{x \in A} f(x) +\varepsilon\).    Lemma  \ref{ELdaz} (Ekeland's variational principle)   yields that  there exists \(\bar x\in A\) satisfying:
	 		\begin{itemize}
	 			\item[(a)] \(f(\bar x)\le f(x_0)\);
	 			\item[(b)] \(\|\bar x-x_0\|\le \lambda\);
	 			\item[(c)] for all \(x\in A\setminus\{\bar x\}\), \(f(x) > f(\bar x)-\dfrac{\varepsilon}{\lambda}\,\|x-\bar x\|\).
	 		\end{itemize}
	 		If \(\bar x \in  E_w\), then  it follows from  (b) that
	 		\[
	 		d(x_0,E_w)\le \|x_0-\bar x\|\le \lambda = \frac{1}{c}\theta(x_0),
	 		\]
	 	which contradicts (\ref{Econtra1}). Hence   \(\bar x \notin  E_w\),  and so   \(\theta(\bar x)=f(\bar x)>0\).   (c) can be rewritten as
	 		\begin{equation}\label{Econtra2}
	 			\forall x\in A\setminus\{\bar x\},\qquad \theta(x) > \theta(\bar x) - c\,\|x-\bar x\|,
	 		\end{equation}
	 		because \(\varepsilon/\lambda = c\).
	 			 		Now apply  (\ref{EevL})   to \(\bar x\in A\setminus E_w\),  there exists \(y_0 \in A\) with \(y_0 \neq \bar x\) such that
	 		\begin{equation}\label{Econtra3}
	 			\theta(y_0 ) \le \theta(\bar x) - c\,\|\bar x-y_0\|.
	 		\end{equation}
	 It follows from  (\ref{Econtra2})  that		$\theta(y_0) > \theta(\bar x) - c\,\|y_0-\bar x\|,$
	 		which contradicts (\ref{Econtra3}). 
	 Therefore
	 		\[
	 		d(x,E_w)\le \frac{1}{c}\,\theta(x), \quad \forall x\in A,
	 		\]
	 		which is exactly statement (i) with \(\tau = 1/c\).
	 		
	\noindent\textbf{(i) $\Rightarrow$ (x)}.
Assume that  there exists    \(\tau > 0\) such that   (\ref{EBAQJ})  is satisfied.    Take any $x\in A$. 
For any $y\in E_w$,  we have
\[
\|F(x)-F(y)\|=\|F(x-y)\|\le\|F\|\,\|x-y\|.
\]
Taking the infimum over all $y\in E_w$ on both sides yields
\[
\inf_{y\in E_w}\|F(x)-F(y)\|\le\|F\|\inf_{y\in E_w}\|x-y\|.
\]
This means that
\begin{equation}\label{Efty1}
	d \bigl(F(x),F(E_w)\bigr)\le\|F\|\,d(x,E_w).  
\end{equation}
It is easy to see that $F(E_w)=\operatorname{WMin}(F(A),C)$.
Combining this with   (\ref{EBAQJ})  and  (\ref{Efty1}), we get
\[
d \bigl(F(x),\operatorname{WMin}(F(A),C)\bigr)\le\|F\|\,d(x,E_w)   \le\|F\|\,\tau\,\theta(x).
\]
Set $c:=\|F\|\,\tau >0$. Since $x\in A$ was arbitrary, (x) follows.

	 	\noindent\textbf{(iii) $\Rightarrow$ (xi)}.
	 If (iii) holds, then for all $x\in A\setminus E_w$,
	 \[
	 \frac{d_{\widehat{A}}(x)}{d(x,E_w)}\ge\frac{1}{\tau}.
	 \]
	 As $x\xrightarrow{A} E_w$, the limit inferior is at least $1/\tau>0$, so (xi) holds.		
	 		
		 	\noindent\textbf{(i) $\Rightarrow$ (xii)}.
Suppose that  there exists \(\tau > 0\) such that   (\ref{EBAQJ})  is satisfied.

Take arbitrary \(t \ge 0\) and \(x \in \Phi(t)\). Since \(x \in A\) and \(\theta(x) \le t\),   (\ref{EBAQJ})  yields
\[
d(x, E_w) \le \tau \theta(x) \le \tau t.
\]
Because \(E_w \subseteq \Phi(t)\), clearly \(\sup_{y \in E_w} d(y, \Phi(t)) = 0\). Hence the Hausdorff distance satisfies
\[
d_H(\Phi(t), E_w) = \sup_{x \in \Phi(t)} d(x, E_w) \le \tau t, \quad \forall t \ge 0.
\]
Taking \(t_0 = 1\) and \(\beta = \tau\), we have for all \(t \in [0, t_0]\) that \(d_H(\Phi(t), E_w) \le \beta t\).

	\noindent\textbf{(i) $\Rightarrow$ (xiii)}.
	By definition of $\Phi$, clearly $\Phi(0)=\theta^{-1}(0)\cap A=E_w$.
	For a fixed $x\in A$,
	\[
	\Phi^{-1}(x)=\{t\ge0:\theta(x)\le t\}=[\theta(x),+\infty),
	\]
and	so $d(0,\Phi^{-1}(x))=\inf\{|t-0|:t\ge\theta(x)\}=\theta(x)$.
	Therefore,      (\ref{EBkmJ})    is equivalent to
\begin{equation}\label{EViiiD}
	d(x,E_w)\le\kappa\,\theta(x),\quad\forall x\in U\cap A.
 \end{equation}
	If (i) holds, taking $U=X$ shows that (xiii) holds.

	\noindent\textbf{(x) $\Rightarrow$ (i)}.  Suppose that   there exist $\mu>0$ such that $\|F(x)\|\ge\mu\|x\|$ for any $x\in A-A$.    Fix an arbitrary $x\in A$.
					 By linearity of $F$, for any $y\in E_w    \subseteq   A$,  we have
			  \[
			 \|F(x)-F(y)\|=\|F(x-y)\|\ge\mu\|x-y\|,
			 \]
 and so 
		\[
		\|x-y\|\le\frac{1}{\mu}\,\|F(x)-F(y)\|.
		\]
		Taking the infimum over all $y\in E_w$, we get
		\begin{equation}\label{EfghB1}
		d(x,E_w)=\inf_{y\in E_w}\|x-y\|
		\le\inf_{y\in E_w}\frac{1}{\mu}\,\|F(x)-F(y)\|
		=\frac{1}{\mu}\,\inf_{y\in E_w}\|F(x)-F(y)\|.
	 \end{equation}
The expression $\inf_{y\in E_w}\|F(x)-F(y)\|$ is exactly $d \bigl(F(x),F(E_w)\bigr)$.  By  $F(E_w)=\operatorname{WMin}(F(A),C)$  and    (\ref{EfghB1}),  we get  
		\begin{equation}\label{EfghB2}
		d(x,E_w)\le\frac{1}{\mu}\,d \bigl(F(x),\operatorname{WMin}(F(A),C)\bigr).  
		 \end{equation}
		  (x) provides a constant $c>0$   such that    
  $$ d \bigl(F(x),\operatorname{WMin}(F(A),C)\bigr)     \le c \; \theta(x),   \quad  \forall x\in A.$$ 
	This together with   (\ref{EfghB2})    implies that  
	  we get
		\[
		d(x,E_w)\le\frac{1}{\mu}\cdot  c  \,\theta(x)=\frac{c}{\mu}\,\theta(x).
		\]
		Set $\tau:=c/\mu>0$,  then
		\[
		d(x,E_w)\le\tau\,\theta(x).
		\]
		This holds for all $x\in A$, which is exactly statement (i).

		 	\noindent\textbf{(xii) $\Rightarrow$ (i)}.
		Suppose there exist \(t_0 > 0\) and \(\beta > 0\) such that 
		\begin{equation}\label{EthhB1}
			d_H(\Phi(t), E_w) \le \beta t,  \quad  \forall t \in [0, t_0].
		\end{equation}

	Take any \(x \in A\) and set \(t = \theta(x)\). We distinguish two cases:
	
  If \(t \le t_0\), then \(x \in \Phi(t)\). By   (\ref{EthhB1}),  we have 
		\[
		d(x, E_w) \le \sup_{z \in \Phi(t)} d(z, E_w) = d_H(\Phi(t), E_w) \le \beta t = \beta \theta(x).
		\]
		
		 If \(t > t_0\), since \(A\) is bounded, let \(\eta = \operatorname{diam}(A) = \sup_{a,b \in A} \|a - b\| < +\infty\). Because \(x \in A\) and \(E_w \subseteq A\), we have \(d(x, E_w) \le \eta\). Combining this with \(t > t_0\) yields
		\[
		d(x, E_w) \le \eta < \frac{\eta}{t_0} t = \frac{\eta}{t_0} \theta(x).
		\]
 
	Setting \(\tau = \max\{\beta, \eta/t_0\}\), we obtain \(d(x, E_w) \le \tau \theta(x)\) for all \(x \in A\). 
	
	\noindent\textbf{(xi) $\Rightarrow$ (iii)}.
	From (xi), set
	\[
	\gamma := \liminf_{\substack{x\xrightarrow{A}E_w\\ x\notin E_w}} \frac{d_{\widehat{A}}(x)}{d(x,E_w)} > 0.
	\]
	Hence there exists $\eta>0$ such that for all $x\in A$ with $0<d(x,E_w)<\eta$,
	\[
	\frac{d_{\widehat{A}}(x)}{d(x,E_w)} \ge \frac{\gamma}{2} .
	\]
	Equivalently,
\begin{equation}\label{HYTz1}
	d(x,E_w) \le \frac{2}{\gamma}\, d_{\widehat{A}}(x), \qquad \forall\,x\in A,\; 0<d(x,E_w)<\eta.  
 \end{equation}
		Let $Q := \{x\in A : d(x,E_w)\ge \eta\}$. Since $A$ is compact and the distance function is continuous, $Q$ is compact.
	
	We now show that $\inf_{x\in Q} d_{\widehat{A}}(x) > 0$.
	Suppose that  $\inf_{x\in Q} d_{\widehat{A}}(x) = 0$.   By the compactness of $Q$ and continuity of $d_{\widehat{A}}$,  there exists  $\bar{x}\in Q \subseteq A$ such that
	$$d_{\widehat{A}}(\bar{x})=  \inf_{x\in Q} d_{\widehat{A}}(x) = 0 .$$
	This together with  the  closedness of  $\widehat{A}$  implies that $\bar{x}\in \widehat{A}$.
	 Moreover $\bar{x}\in A$, so $\bar{x}\in A\cap\widehat{A}=E_w$. This contradicts $d(\bar{x},E_w)\ge \eta>0$. Hence
\begin{equation}\label{HYTz2}
	\alpha := \inf_{x\in Q} d_{\widehat{A}}(x) >0.  
 \end{equation}
	
	Take any $x\in A$, and consider two cases:
 
 Case 1:  $d(x,E_w)<\eta$.  It follows from  (\ref{HYTz1})  that
		\[
		d(x,E_w)\le \frac{2}{\gamma} d_{\widehat{A}}(x).
		\]
		
	  Case 2:   $d(x,E_w)\ge \eta$, i.e., $x\in Q$. Using  (\ref{HYTz2})    and the diameter $\beta :=\operatorname{diam}(A)=\sup_{u,v\in A}\|u-v\|<\infty$,  we have 
		\[
		d(x,E_w) \le \beta \le \frac{\beta}{\alpha}\, d_{\widehat{A}}(x).
		\]
 	Taking $\tau = \max\left\{\dfrac{2}{\gamma},\; \dfrac{\beta}{\alpha}\right\}$, we obtain for all $x\in A$,
	\[
	d(x,E_w) \le \tau\, d_{\widehat{A}}(x).
	\]
	This is the global error bound (iii).
	
	\noindent\textbf{(xiii) $\Rightarrow$ (i)}.
	Assume $A$ is compact   and there exist $\kappa>0$ and an open set $U\supseteq E_w$ such that
  (\ref{EBkmJ})  is satisfied.   
	Set $H:=A\setminus U$.   Since   $A$ is compact, $U$ is open and $E_w \subseteq U$,  we get that  $H$ is compact   and $H\cap E_w= \emptyset$.
	The function $\theta$ is continuous on the compact set $H$, hence attains its minimum $m:=\min_{x\in H}\theta(x)$.
	If $m=0$, then there exists $\bar x\in H$ with $\theta(\bar x)=0$, implying $\bar x\in E_w$, contradicting $H\cap E_w= \emptyset$. Therefore $m>0$.
	
	For any $x\in H$, since $A$ is bounded, $\beta =\operatorname{diam}(A)<\infty$, and so $d(x,E_w)\le \beta$. Together with $\theta(x)\ge m$, we have 
	\begin{equation} \label{EHYZ2}
		d(x,E_w)\le \frac{\beta}{m}\,\theta(x).  
	\end{equation}
	
	Take any $x\in A$.  If $x\in U$, noting that  (\ref{EBkmJ})  is  equivalent to (\ref{EViiiD}), we get  
	   $d(x,E_w)\le\kappa\,\theta(x)$.
  
 If $x\in H$,  it follows from  (\ref{EHYZ2}) that $d(x,E_w)\le (\beta/m)\,\theta(x)$.
 	Taking $\tau:=\max\{\kappa,\; \beta/m\}$, the global error bound holds:
	\[
	d(x,E_w)\le\tau\,\theta(x),\qquad \forall x\in A.
	\]
	This completes the proof. 
	\end{proof}

 \begin{remark}   		Theorem~\ref{XLWCJK} establishes the equivalence of thirteen characterizations of the global error bound property for the concave merit function \(\theta(x)=\sup_{a\in A}(-\Delta_C(F(x)-F(a)))\) in vector optimization. Theorem~\ref{XLWCJK} significantly extends and unifies several streams of error bound theory developed in the literature. Below we compare Theorem~\ref{XLWCJK} with the results in   \cite{LiuNgYang2009, NgZheng, KL1999, NgYang2002, AC2004, Kruger, KLT2018}.
 		
 Theorem~\ref{XLWCJK} directly builds upon and substantially extends the foundational work of Liu, Ng and Yang   \cite{LiuNgYang2009}, who first introduced the merit function \(\theta\) and proved the equivalence (i) \(\Leftrightarrow\) (iii) (error bound for \(\theta\) on \(A\) is equivalent to error bound for \(d_{\widehat A}\) on \(A\)) as well as (i) \(\Leftrightarrow\) (ii) (linear regularity of \(\{A,\widehat A\}\)). Our theorem adds ten new equivalent characterizations---including uniform descent condition (iv), global slope condition (ix), asymptotic condition (xi), and sublevel set Hausdorff stability (xii)---thereby providing a comprehensive equivalence framework not present in the original work.

 Ng and Zheng \cite{NgZheng} studied error bounds for lower semicontinuous functions using directional derivatives. Their Theorem 2.5 provides a sufficient condition \(\underline{d}^+f(x)(h_x)\le -\delta\) for general functions. However, their precise characterizations—Theorems 3.1 and 3.3—are established for convex functions on reflexive Banach spaces. In particular, Theorem 3.1 shows that for convex functions, a global error bound is equivalent to the existence, for each \(x\notin S\), of a unit direction \(h_x\) with \(d^+f(x)(h_x)\le -1/\tau\) (condition (v) of Theorem 3.1). Our condition (iv) in Theorem~\ref{XLWCJK} adopts a different perspective: instead of requiring a directional derivative bound, it imposes a uniform descent condition—namely, from every \(x\in A\) one can move to some \(y\in A\) with \(\|x-y\|\le \alpha\theta(x)\) and \(\theta(y)\le \beta\theta(x)\).    Both conditions share the same underlying idea, as both require the existence of a feasible descent direction from every non-solution point.         Importantly, the proof of  (i) $\Leftrightarrow$ (iv) does not rely on the concavity   of \(\theta\),  it uses only the nonnegativity of \(\theta\) on \(A\) and standard metric arguments. Thus condition (iv) is applicable to general functions, in contrast to the directional derivative criterion in \cite{NgZheng}, which is specific to convex functions.  Moreover,  condition (iv) provides an equivalent characterization of the global error bound, without requiring differentiability, convexity, or explicit computation of directional derivatives.

 Klatte and Li  	\cite{KL1999}  studied asymptotic constraint qualifications for convex inequalities and showed equivalence among three conditions: bounded excess, Slater condition together with the asymptotic constraint qualification, and positivity of normal directional derivatives. Their work concerns general convex constraint systems, not vector optimization merit functions. Condition (xii) (sublevel set Hausdorff stability) in our theorem is conceptually related to their bounded excess condition, while our characterization via directional derivatives in Theorem  \ref{TXEvQ} ---\(\sup_{x\in A\setminus E_w}\varphi(x)<0\)---parallels their positivity of normal directional derivatives.
 		
 Ng and Yang   	\cite{NgYang2002}   studied error bounds for abstract linear inequality systems \(Ax\le b\) in Banach spaces ordered by closed convex cones. They proved that if \(C\) is polyhedral, the system always has an error bound, and characterized error bounds via angles between subspaces in Hilbert spaces. Their results are restricted to linear inequality systems, whereas our theorem applies to the nonlinear (concave) merit function arising from vector optimization, providing a much broader framework.

 Az\'e and Corvellec   \cite{AC2004}   characterized global and local error bounds for lower semicontinuous functions on complete metric spaces using the strong slope \(|\nabla f|\). In their Proposition~3.1, they proved that when \(f\) is convex, the strong slope admits the explicit representation
 		\[
 		|\nabla f|(x) = \sup_{f(z) < f(x)} \frac{f(x) - f(z)}{\|x - z\|},
 		\]
 		which is exactly the definition of the slope \(m(x)\) appearing in our condition (ix). Condition (ix) in our theorem---\(m(x)\ge \gamma\) with \(m(x)=\sup_{\theta(y)<\theta(x)}(\theta(x)-\theta(y))/\|x-y\|\)---is therefore a direct analogue of their strong slope condition, adapted to the concave merit function \(\theta\) on the convex feasible set \(A\). It is important to note, however, that while the slope formula of Az\'e and Corvellec relies on convexity, our proof of the equivalences (i) \(\Leftrightarrow\) (viii) \(\Leftrightarrow\) (ix) uses neither convexity nor concavity of the function. In fact, these implications follow from purely metric and variational arguments (Ekeland's principle) and hold for any lower semicontinuous function on a complete metric space. Thus, our result   provides a general principle that subsumes both convex and concave cases.

 In his foundational paper, Kruger   \cite{Kruger}   develops a systematic theory of error bounds and metric subregularity for set-valued mappings between general metric or Banach spaces. The criteria are expressed in terms of various primal and subdifferential slopes, and they are primarily \emph{local} in nature. In contrast, Theorem~\ref{XLWCJK} provides a comprehensive \emph{global} characterization specifically tailored to the vector optimization setting, where the merit function is concave and its zero set coincides with the weakly efficient solution set \(E_w\). 
 	Notably, conditions (viii) and (ix)---the descent and slope conditions---are direct analogues of the primal slope criteria in    \cite{Kruger}, which are defined on arbitrary metric spaces. Since the proofs of these two conditions do not depend on concavity, they effectively transplant the general global error bound slope criteria to the feasible set \(A\) with \(E_w\) as the zero-level set. Thus, the theorem successfully embeds the general theory into vector optimization without being limited to concave functions. Moreover, condition (v) (perturbation stability) and condition (xiii) (inverse sublevel-set characterization) are new global stability features in the vector optimization context, enriching the framework of    \cite{Kruger} which mainly focused on local slopes.
 		
 		The perturbation paper \cite{KLT2018} extends the developments in Kruger, Ngai and Th\'era      \cite{KNT2010} and characterizes the stability of local and global error bounds under data perturbations in the Banach space setting. It introduces new concepts of arbitrary, convex and linear perturbations of the function defining the constraint system, and interprets the characterizations as estimates of the ``radius of error bounds''. 	Theorem~\ref{XLWCJK} incorporates perturbation-type characterizations in conditions (v) and (xiii): (v) asserts stability of the distance to \(E_w\) under arbitrary displacements \(e\in X\), while (xiii) expresses the error bound in terms of the inverse image of the sublevel mapping \(\Phi\). These conditions are novel in the context of vector optimization merit functions and directly mirror the perturbation philosophy of  \cite{KLT2018}. However, whereas the perturbation paper focuses on general convex functions and their subdifferentials, Theorem~\ref{XLWCJK} establishes these perturbation characterizations for the specific concave merit function \(\theta\), leveraging its Lipschitz continuity and concavity. Furthermore, condition (xii)---the Hausdorff distance between sublevel sets and the solution set---provides a quantitative stability estimate that is analogous to the ``radius of error bounds'' interpretation in the perturbation paper \cite{KLT2018}.

 	Theorem~\ref{XLWCJK} introduces new characterizations---including global slope (ix), perturbation stability (v), extended domain error bound (vi) and (vii), asymptotic   condition (xi), and sublevel set stability (xii)---that are novel in the vector optimization context. Moreover, Theorem~\ref{XLWCJK} establishes partial implications, including (i)\(\Rightarrow\)(x), (iii)\(\Rightarrow\)(xi), (i)\(\Rightarrow\)(xii), (i)\(\Rightarrow\)(xiii), and conditional converses under boundedness of $A$, compactness of $A$, or injectivity assumptions on \(F\). No single previous work contains all these equivalences. Theorem~\ref{XLWCJK} thus represents a significant consolidation and extension of the error bound theory for vector optimization.
   \end{remark}

\begin{definition}
	$\theta$ is said to have a  local error bound at $x_0 \in E_w$, if there exist $\tau > 0$  and $\delta > 0$ such that
	\[
	d(x, E_w) \leq \tau \theta (x), \qquad \forall x \in A \cap B(x_0,\delta).
	\]
	
\end{definition}
	In keeping with the distinction from local error bounds, the term error bound is occasionally called a global error bound

 We now turn to the relationship between local and global error bounds.

\begin{theorem} 
	\label{thm:localglobal}
	Let $A\subset X$ be a nonempty  compact  set. 
	Then the following three statements are equivalent:
	\begin{enumerate}
		\item[(i)]  $\theta$ has a global error bound  on $A$: there exists  $\tau>0$ such that
		\[
		d(x,E_w)\le \tau\,\theta(x),\quad \forall x\in A .
		\]
		\item[(ii)]  $\theta$ has a uniform local error bound: there exist  $\tau>0$ and $\delta>0$ such that for  every $\bar x\in E_w$,
	$$		d(x,E_w)\le \tau\,\theta(x),\quad \forall x\in A\cap B(\bar x,\delta).	$$
			\item[(iii)] $\theta$ has a local error bound at each $\bar x\in E_w$.
	\end{enumerate}
\end{theorem}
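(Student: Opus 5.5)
The implications (i) $\Rightarrow$ (ii) $\Rightarrow$ (iii) are immediate. For (i) $\Rightarrow$ (ii), I take the same $\tau$ and an arbitrary $\delta>0$, since $A\cap B(\bar x,\delta)\subseteq A$. For (ii) $\Rightarrow$ (iii), I specialize the uniform constants to each $\bar x\in E_w$. The substance is (iii) $\Rightarrow$ (i), and I will prove it by a compactness argument modeled on the proof of (xiii) $\Rightarrow$ (i) in Theorem \ref{XLWCJK}.

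Assume (iii). For each $\bar x\in E_w$ choose $\tau_{\bar x}>0$ and $\delta_{\bar x}>0$ with $d(x,E_w)\le\tau_{\bar x}\theta(x)$ for all $x\in A\cap B(\bar x,\delta_{\bar x})$. The set $E_w=A\cap\widehat A$ is closed and contained in the compact set $A$, so it is compact. If $E_w=\emptyset$ the statement degenerates; I will note that $d(x,E_w)=+\infty$ by convention, so I expect to assume $E_w\neq\emptyset$. This holds, for instance, because $\theta$ is continuous on compact $A$ and attains its minimum, although nonemptiness of $E_w$ actually needs a separate remark. I then extract a finite subcover $B(\bar x_1,\delta_1),\dots,B(\bar x_n,\delta_n)$ of $E_w$ from the open cover $\{B(\bar x,\delta_{\bar x})\}_{\bar x\in E_w}$. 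Set $U:=\bigcup_i B(\bar x_i,\delta_i)$, which is open and contains $E_w$, and set $\kappa:=\max_i\tau_{\bar x_i}$. Then $d(x,E_w)\le\kappa\,\theta(x)$ for all $x\in A\cap U$.

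Outside $U$ I argue exactly as in (xiii) $\Rightarrow$ (i). The set $H:=A\setminus U$ is compact and disjoint from $E_w$. By Theorem \ref{YL:thetaprop2}, $\theta$ is continuous, so $m:=\min_H\theta$ is attained. Moreover $m>0$, because $\theta\ge0$ on $A$ and $\theta(x)=0$ iff $x\in E_w$ (Lemma \ref{YL:thetaprop}). With $\beta:=\operatorname{diam}(A)<\infty$ we get $d(x,E_w)\le\beta\le(\beta/m)\theta(x)$ on $H$; if $H=\emptyset$ this step is vacuous. Taking $\tau:=\max\{\kappa,\beta/m\}$ yields (i).

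The only delicate point is the passage from pointwise radii to a single neighborhood $U$ of $E_w$, and it is handled by the compactness of $E_w$ through the finite subcover. No Lebesgue-number argument is needed for (i). If one also wanted (iii) $\Rightarrow$ (ii) directly with a uniform $\delta$, I would use the Lebesgue number of the cover of $E_w$: there is $\delta>0$ such that each $B(\bar x,\delta)$ with $\bar x\in E_w$ lies in some $B(\bar x_i,\delta_i)$. That route is unnecessary, since (i) $\Rightarrow$ (ii) already closes the cycle.
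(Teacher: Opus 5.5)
Your argument is correct and is essentially the paper's proof. It uses the same ingredients: compactness of $E_w$, a finite subcover, a positive minimum of $\theta$ on the compact remainder, and the diameter bound. The only cosmetic difference is that the paper covers $E_w$ by half-radius balls $B(\bar x_i,\delta_i/2)$ to get the bound on the uniform tube $\{x\in A: d(x,E_w)<\delta\}$ and then handles $H=\{x\in A: d(x,E_w)\ge\delta\}$, whereas you use the open union $U$ directly, as in (xiii) $\Rightarrow$ (i) of Theorem~\ref{XLWCJK}.

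On the loose end you flag, $E_w\neq\emptyset$ does not follow from attainment of $\min_A\theta$ alone, since that does not show the minimum value is $0$. It does follow by scalarization: any minimizer over the compact set $A$ of $\langle y^*,F(\cdot)\rangle$ with $y^*\in S(C^+)$ is weakly efficient, because $\langle y^*,c\rangle>0$ for every $c\in\intr C$.
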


\begin{proof}
 The implications (i) $\Rightarrow$ (ii) and (ii) $\Rightarrow$ (iii) follow directly from the definitions.
	
(iii) $\Rightarrow$ (i).
	For any $\bar x\in E_w$, since $\theta$ has a local error bound at $\bar x$, there exist  $\delta_{\bar x} >0$ and $\kappa_{\bar x}>0$ such that
\[
d(x,E_w)\le \kappa_{\bar x}\,\theta(x), \quad \forall x\in A\cap B(\bar x,\delta_{\bar x}).
\]
Because $E_w\subseteq A$ and $A$ is compact,  we get that  $E_w$ is compact. There exist finitely many points $\bar x_1,\dots,\bar x_m\in E_w$ such that
\[
E_w\subseteq\bigcup_{i=1}^{m}B(\bar x_i,\delta_i/2),\qquad \delta_i:=\delta_{\bar x_i}.
\]
Set $\delta:=\min_i\delta_i/2>0$ and $\kappa:=\max_i\kappa_{\bar x_i}$. Now take any $x\in A$ satisfying $d(x,E_w)<\delta$. By the definition of distance, there exists $y\in E_w$ with $\|x-y\|<\delta$. Since $y$ belongs to some $B(\bar x_i,\delta_i/2)$, we have
\[
\|x-\bar x_i\|\le\|x-y\|+\|y-\bar x_i\|<\delta+\frac{\delta_i}{2}\le\delta_i,
\]
hence $x\in B(\bar x_i,\delta_i)$, which yields
\[
d(x,E_w)\le \kappa_{\bar x_i}\,\theta(x)\le\kappa\,\theta(x).
\]
Thus,
\begin{equation} \label{EglZS1}
	d(x,E_w)\le \kappa\,\theta(x),\qquad \forall x\in A,\ d(x,E_w)<\delta.  
\end{equation}

	Now take any $x\in A$. We distinguish two cases.
	
	\noindent\textbf{Case 1:} $d(x,E_w)<\delta$.  From (\ref{EglZS1}) we immediately obtain
$	d(x,E_w)\le \kappa\,\theta(x).$
	
	\noindent\textbf{Case 2:} $d(x,E_w)\ge \delta$. Set
$	H:=\bigl\{x\in A : d(x,E_w)\ge \delta\bigr\}.$
Since	$A$ is compact and $d(\cdot,E_w)$ is continuous, we get that $H$ is compact. Clearly $H\cap E_w=\varnothing$ and $\theta(x)>0$ for all $x\in H$. 
This together with the continuity of $\theta$  implies that
 	\[
	m:=\min_{x\in H}\theta(x)>0.
	\]
	On the other hand, $A$ is bounded, and its diameter $\beta:=\operatorname{diam}(A)=\sup_{u,v\in A}\|u-v\|<+\infty$. For $x\in H$, we have $d(x,E_w)\le \beta$, which together with $\theta(x)\ge m$ yields
	\[
	d(x,E_w)\le \beta \le \frac{\beta}{m}\,\theta(x).
	\]
	
	Combining the two cases, set
	\[
	\tau := \max\Bigl\{\kappa,\;\frac{\beta}{m}\Bigr\},
	\]
	then for every $x\in A$,
$	d(x,E_w)\le \tau \,\theta(x).$
\end{proof}

\section{Directional derivatives of the merit   function  }

By Theorem~\ref{YL:thetaprop2}, \(\theta\) is Lipschitz continuous and concave on \(X\). Consequently, for every \(x\in X\) and every direction \(d\in X\), the   directional derivative  
 \[
  \theta'(x;d):=\lim_{t\downarrow 0}\frac{\theta(x+td)-\theta(x)}{t}
  \]  
  is well‐defined and finite.

The goal of this section is to provide a thorough analysis of this directional derivative, including its explicit representation, its superdifferential properties, and its connections to the tangent cones of the feasible set and the weakly efficient solution set.

\begin{theorem}  \label{JBXZDS} 
Let $x \in X$.	The following statements hold:
	\begin{itemize}
		\item[(i)]  For   any direction $d \in X$, the directional derivative  $	\theta'(x; d)$  exists. 
		\item[(ii)] The map $d \mapsto \theta'( x; d)$ is a Lipschitz continuous   function on $X$ with the   constant $\|F\|$.
			\item[(iii)] For all $d_1, d_2 \in X$ and $\alpha \ge 0$,
			\[
			\theta'(x; d_1+d_2) \ge \theta'( x; d_1) + \theta'( x; d_2), \quad
			\theta'( x; \alpha d_1) = \alpha \,\theta'( x; d_1).
			\]
	In		particular,   the map $d \mapsto \theta'( x; d)$ is a   concave function on $X$.
	\end{itemize}
\end{theorem}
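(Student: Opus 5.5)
The plan is to rely only on the two properties of $\theta$ from Theorem~\ref{YL:thetaprop2}: concavity on $X$ and $\|F\|$-Lipschitz continuity. The standing assumption $\theta<+\infty$ makes $\theta$ real-valued.

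For (i), fix $x,d$ and set $q(t):=\bigl(\theta(x+td)-\theta(x)\bigr)/t$ for $t>0$. Since $t\mapsto\theta(x+td)$ is concave on $\mathbb{R}$, the standard slope inequality for concave functions of one variable shows that $q$ is nonincreasing in $t$. For $0<s<t$, write $x+sd=(s/t)(x+td)+(1-s/t)x$ and apply concavity to get $q(s)\ge q(t)$. Hence $\lim_{t\downarrow0}q(t)=\sup_{t>0}q(t)$ exists in $(-\infty,+\infty]$. The Lipschitz bound gives $|q(t)|\le\|F\|\,\|d\|$, so the limit is finite. This step is elementary; the only care needed is getting the direction of monotonicity right for a concave rather than convex function.

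For (ii), I would fix $d_1,d_2$. For each $t>0$,
\[
\Bigl|\frac{\theta(x+td_1)-\theta(x)}{t}-\frac{\theta(x+td_2)-\theta(x)}{t}\Bigr|=\frac{|\theta(x+td_1)-\theta(x+td_2)|}{t}\le\|F\|\,\|d_1-d_2\|,
\]
using Theorem~\ref{YL:thetaprop2}(ii). Passing to the limit $t\downarrow0$ with (i) yields $|\theta'(x;d_1)-\theta'(x;d_2)|\le\|F\|\,\|d_1-d_2\|$.

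For (iii), positive homogeneity is immediate. It is trivial for $\alpha=0$, since $\theta'(x;0)=0$. For $\alpha>0$, substitute $s=\alpha t$ in the difference quotient. For superadditivity, I would use concavity with the midpoint
\[
x+\tfrac{t}{2}(d_1+d_2)=\tfrac12(x+td_1)+\tfrac12(x+td_2),
\]
which gives
\[
\theta\bigl(x+\tfrac t2(d_1+d_2)\bigr)-\theta(x)\ge\tfrac12[\theta(x+td_1)-\theta(x)]+\tfrac12[\theta(x+td_2)-\theta(x)].
\]
Dividing by $t/2$ and letting $t\downarrow0$ gives $\theta'(x;d_1+d_2)\ge\theta'(x;d_1)+\theta'(x;d_2)$.

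Finally, concavity of $d\mapsto\theta'(x;d)$ follows from combining these two facts: for $\lambda\in[0,1]$,
\[
\theta'(x;\lambda d_1+(1-\lambda)d_2)\ge\theta'(x;\lambda d_1)+\theta'(x;(1-\lambda)d_2)=\lambda\theta'(x;d_1)+(1-\lambda)\theta'(x;d_2).
\]
No step here is a real obstacle.
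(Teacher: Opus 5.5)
Your proposal is correct and follows the paper's proof essentially step by step. The steps are: the difference quotient is nonincreasing by concavity and bounded by $\|F\|\,\|d\|$; the Lipschitz estimate on difference quotients passes to the limit; positive homogeneity comes from the substitution $s=\alpha t$; and superadditivity follows from midpoint concavity. The only cosmetic difference is that you apply concavity at $x+\tfrac t2(d_1+d_2)$, whereas the paper writes $x+t(d_1+d_2)$ as the midpoint of $x+2td_1$ and $x+2td_2$.
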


\begin{proof}
By Theorem \ref{YL:thetaprop2},	we know that $\theta:X\to\mathbb{R}$ is a Lipschitz continuous concave function; i.e., there exists a constant $L=\|F\|>0$   such that
	\begin{equation}\label{eq:lip}
		|\theta(x)-\theta(y)|\le L\|x-y\|,\quad \forall x,y\in X,
	\end{equation}
	and for any $x,y\in X$ and $\lambda\in[0,1]$,
	\begin{equation}\label{eq:concave}
		\theta\bigl((1-\lambda)x+\lambda y\bigr)\ge (1-\lambda)\theta(x)+\lambda\theta(y).
	\end{equation}

(i).	Fix an arbitrary point $x\in X$ and a direction $d\in X$.
		For $t>0$,  define the difference quotient
	\[
	Q(t):=\frac{\theta(x+td)-\theta(x)}{t}.
	\]
	
	We first show that $ Q(\cdot)$ is nonincreasing on $(0,\infty)$. For $0<t_1<t_2$, set $\lambda:=\frac{t_1}{t_2}\in(0,1)$, then
	\[
	x+t_1d = (1-\lambda)x+\lambda(x+t_2d).
	\]
   It follows from  (\ref{eq:concave})  that
	\[
	\theta(x+t_1d) \ge (1-\lambda)\theta(x)+\lambda\theta(x+t_2d),
	\]
	and so
	\[
	\theta(x+t_1d)-\theta(x) \ge \lambda\bigl(\theta(x+t_2d)-\theta(x)\bigr)
	= \frac{t_1}{t_2}\bigl(\theta(x+t_2d)-\theta(x)\bigr).
	\]
	Dividing both sides by $t_1$ gives
	\[
	Q(t_1) = \frac{\theta(x+t_1d)-\theta(x)}{t_1}
	\ge \frac{\theta(x+t_2d)-\theta(x)}{t_2} = Q(t_2).
	\]
 
	On the other hand, by the Lipschitz condition \eqref{eq:lip}, $Q(t)\le  L\|d\|$ for all $t>0$,
	so the directional derivative 
	\[
	\theta'(x;d) =\sup_{t > 0}Q(t) =\lim_{t\downarrow0}Q(t) =\lim_{t\downarrow0}\frac{\theta(x+td)-\theta(x)}{t}
	\]
	exists and is finite.

(ii).	For any $d_1,d_2\in X$ and $t>0$, by \eqref{eq:lip},
	\[
	\begin{aligned}
		\bigl|Q_{d_1}(t)-Q_{d_2}(t)\bigr|
		&= \frac{\bigl|\theta(x+td_1)-\theta(x+td_2)\bigr|}{t}\\
		&\le \frac{L\|t d_1-t d_2\|}{t} = L\|d_1-d_2\|.
	\end{aligned}
	\]
	Letting $t\downarrow0$, we obtain
	\[
	\bigl|\theta'(x;d_1)-\theta'(x;d_2)\bigr| \le L\|d_1-d_2\|,\qquad \forall d_1,d_2\in X.
	\]

(iii).	Let $\alpha\ge0$. If $\alpha=0$, then $\theta'(x;0)=\lim_{t\downarrow0}0=0$. If $\alpha>0$, make the change of variable $s=\alpha t$; as $t\downarrow0$, $s\downarrow0$, and
	\[
	\theta'(x;\alpha d) = \lim_{t\downarrow0}\frac{\theta(x+t\alpha d)-\theta(x)}{t}
	= \alpha \lim_{t\downarrow0}\frac{\theta(x+(\alpha t)d)-\theta(x)}{\alpha t}
	= \alpha \lim_{s\downarrow0}\frac{\theta(x+s d)-\theta(x)}{s}
	= \alpha\,\theta'(x;d).
	\]

	For any $d_1,d_2\in X$,   it follows from  (\ref{eq:concave})  that
	\[
	\begin{aligned}
		\theta\bigl(x+t(d_1+d_2)\bigr)
		&= \theta\Bigl(\frac{1}{2}(x+2td_1)+\frac{1}{2}(x+2td_2)\Bigr)\\
		&\ge \frac{1}{2}\theta(x+2td_1) + \frac{1}{2}\theta(x+2td_2).
	\end{aligned}
	\]
	Subtract $\theta(x)$ and divide by $t>0$:
	\[
	\frac{\theta(x+t(d_1+d_2))-\theta(x)}{t}
	\ge \frac{\theta(x+2td_1)-\theta(x)}{2t} + \frac{\theta(x+2td_2)-\theta(x)}{2t}.
	\]
	Letting $t\downarrow0$, the two terms on the right tend to $\theta'(x;d_1)$ and $\theta'(x;d_2)$, respectively,
	while the left-hand side tends to $\theta'(x;d_1+d_2)$. This yields the superadditivity
	$$		\theta'(x;d_1+d_2) \ge \theta'(x;d_1) + \theta'(x;d_2),\qquad \forall d_1,d_2\in X.	$$
	Together with positive homogeneity, for any $\lambda\in[0,1]$,
	\[
	\theta'(x;\lambda d_1+(1-\lambda)d_2)
	\ge \theta'(x;\lambda d_1) + \theta'(x;(1-\lambda)d_2)
	= \lambda\theta'(x;d_1) + (1-\lambda)\theta'(x;d_2).
	\]
	Hence the map $d\mapsto\theta'(x;d)$ is concave.
	\end{proof}

\begin{remark}    \label{RsxfQ} For \(t > 0\), define the difference quotient function
	\[
	q_t(x, d) = \frac{\theta(x + t d) - \theta(x)}{t},   \quad  \forall (x, d) \in X \times X.
	\]
	Since \(\theta\) is continuous, each \(q_t\) is continuous on \(X \times X\).     In view of  the proof of Theorem  \ref{JBXZDS} (i), for any fixed \((x, d)\), the function \(t \mapsto q_t(x,d)\) is nonincreasing on \((0,\infty)\). Hence the directional derivative can be expressed as a supremum:
	\[
	\theta'(x; d) = \lim_{t \downarrow 0} q_t(x,d) = \sup_{t > 0} q_t(x,d).
	\]
	As a supremum of a family of continuous functions, the mapping \((x,d) \mapsto \theta'(x;d)\) is lower semicontinuous.
 \end{remark}

Fix $\bar{x} \in E_w$ and define the function $\Phi : X \to \mathbb{R}$ by $\Phi(d) = \theta'(\bar{x}; d)$ for every $d \in X$.  
By Theorem~\ref{JBXZDS}(iii), $\Phi$ is concave on $X$.  
The superdifferential of the concave function $\Phi$ at a point $d \in X$ is defined as
\[
\partial^+ \Phi(d) = \{ x^* \in X^* : \Phi(y) \le \Phi(d) + \langle x^*, y-d \rangle,\ \forall y \in X \}.
\]

\begin{theorem} \label{AXDSS}
	Let $A \subseteq X$ be a nonempty compact convex set. For any $(v,x) \in K \times X$, define  
	$f_v(x) = \langle F^*v, x \rangle + \sigma_A(-F^*v)$.  
	Then, for every $x \in X$ and every direction $d \in X$, the directional derivative $\theta'(x;d)$ can be expressed as
	\begin{equation}\label{eq:deriV1}
		\theta'(x;d) = \min_{v \in K(x)} \langle F^*v, d \rangle,
	\end{equation}
	where the active index set  
	$K(x) := \{ v \in K : \theta(x) = f_v(x) \}$  
	is nonempty and weak$^*$-compact.
	
	Moreover, for the point $\bar{x} \in E_w$ fixed above, the superdifferential of $\Phi$ admits the following explicit descriptions:
	\begin{itemize}
		\item[(i)] at $0 \in X$,
		\[
		\partial^+ \Phi(0) = \operatorname{co}\bigl(F^*(K(\bar{x}))\bigr) = F^*(K(\bar{x}));
		\]
		\item[(ii)] at an arbitrary $d \in X$,
		\[
		\partial^+ \Phi(d) = \{ x^* \in F^*(K(\bar{x})) : \langle x^*, d \rangle = \Phi(d) \}.
		\]
	\end{itemize}
\end{theorem}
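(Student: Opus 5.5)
The plan is to reduce everything to the convex function $\xi=-\theta$ and apply Lemma \ref{ITDL}. By Theorem \ref{YDOBS}, $\xi(x)=\max_{v\in K}h_v(x)$ with $h_v(x)=\langle -F^*v,x\rangle-\sigma_A(-F^*v)$. I take $Q=K$ with the weak$^*$ topology, $g(v)=-F^*v$ and $s(v)=-\sigma_A(-F^*v)$.

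Checking the hypotheses of Lemma \ref{ITDL}:
\begin{itemize}
\item $Q=K$ is compact Hausdorff by Proposition \ref{RDCxl}.
\item $g$ is weak$^*$-to-weak$^*$ continuous.
\item $s$ is real valued, since $A$ is compact and hence bounded.
\item $v\mapsto\sigma_A(-F^*v)=\sup_{a\in A}\langle v,-F(a)\rangle$ is a supremum of weak$^*$-continuous functions, so it is weak$^*$ lsc. Hence $s$ is usc.
\item $\xi$ is continuous everywhere by Theorem \ref{YL:thetaprop2}(ii).
\end{itemize}
The lemma then gives, at every $x\in X$,
\[
\partial\xi(x)=\overline{\operatorname{co}}^{w^*}\{-F^*v: v\in K(x)\},
\]
because $h_v(x)=\xi(x)$ is exactly $f_v(x)=\theta(x)$.

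Next I establish the structure of $K(x)$. It is nonempty because the minimum in Theorem \ref{YDOBS} is attained. Since $f_v(x)\ge\theta(x)$ for all $v\in K$, we have $K(x)=\{v\in K: f_v(x)\le\theta(x)\}$. The map $v\mapsto f_v(x)=\langle v,F(x)\rangle+\sigma_A(-F^*v)$ is convex and weak$^*$ lsc, so $K(x)$ is a weak$^*$-closed convex subset of $K$, hence weak$^*$-compact and convex. Consequently $F^*(K(x))$ is convex and weak$^*$-compact, and the closed convex hull above is just $-F^*(K(x))$. For the continuous convex $\xi$, the max formula $\xi'(x;d)=\max_{x^*\in\partial\xi(x)}\langle x^*,d\rangle$ then yields
\[
\theta'(x;d)=-\xi'(x;d)=-\max_{v\in K(x)}\langle -F^*v,d\rangle=\min_{v\in K(x)}\langle F^*v,d\rangle,
\]
which is \eqref{eq:deriV1}.

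For (i), set $M=F^*(K(\bar x))$, so that $\Phi(y)=\min_{x^*\in M}\langle x^*,y\rangle$ and $\Phi(0)=0$. Then $\partial^+\Phi(0)=\{x^*:\Phi(y)\le\langle x^*,y\rangle\ \forall y\}$, and the inclusion $M\subseteq\partial^+\Phi(0)$ is immediate. For the converse, suppose $x_0^*\in\partial^+\Phi(0)\setminus M$. Since $M$ is weak$^*$-compact convex, I would separate in $(X^*,w^*)$, whose dual is $X$. This gives $y\in X$ with $\langle x_0^*,y\rangle<\min_{x^*\in M}\langle x^*,y\rangle=\Phi(y)$, a contradiction. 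The equality $\operatorname{co}(M)=M$ holds by the convexity of $M$ shown above. I expect this separation step, together with the verification that $K(x)$ is convex and weak$^*$-closed, to be the main technical point.

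For (ii), let $x^*\in\partial^+\Phi(d)$. Testing $y=0$ and $y=2d$, and using positive homogeneity, gives $\langle x^*,d\rangle=\Phi(d)$. Then for any $y$ and $t>0$, testing $d+ty$ and using superadditivity from Theorem \ref{JBXZDS}(iii) yields $\Phi(y)\le\langle x^*,y\rangle$, so $x^*\in\partial^+\Phi(0)=M$. Conversely, if $x^*\in M$ and $\langle x^*,d\rangle=\Phi(d)$, then $\Phi(y)\le\langle x^*,y\rangle=\Phi(d)+\langle x^*,y-d\rangle$ for all $y$, so $x^*\in\partial^+\Phi(d)$.
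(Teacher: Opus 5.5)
Your proposal is correct. Parts (i) and (ii) follow essentially the paper's argument: separation in $(X^*,w^*)$ against the weak$^*$-compact convex set $F^*(K(\bar x))$, then testing $y=0$ and $y=2d$. The genuine difference is how you obtain \eqref{eq:deriV1}.

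\textbf{The paper's route.} It argues directly, Danskin-style, on $\psi(t)=\theta(x+td)=\min_{v\in K}\bigl(f_v(x)+t\langle F^*v,d\rangle\bigr)$. Active indices give the upper bound on the difference quotient. For the lower bound it takes minimizers $v_t$, extracts a weak$^*$-convergent subnet, and shows that the limit $\bar v$ is active.

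\textbf{Your route.} You identify $\partial\xi(x)=-F^*(K(x))$ at every $x$ via Lemma~\ref{ITDL}, then apply the max formula for the continuous convex function $\xi$.

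\textbf{What each buys.} The paper's argument is self-contained. Structurally it needs only weak$^*$-compactness of $K$ and lower semicontinuity of $v\mapsto f_v(x)$, since $f_{\bar v}(x)\le\liminf f_{v_t}(x)=\theta(x)\le f_{\bar v}(x)$. So it depends less on the boundedness of $A$, which your route needs to make $s$ real-valued in Lemma~\ref{ITDL}. Your route imports two external results, the subdifferential formula for a pointwise maximum and the max formula. In return it gives $\partial^+\theta(x)=F^*(K(x))$ as a by-product at every $x\in X$, not only at points of $E_w$ as in Theorem~\ref{BSXEQ}. Your explicit check that $K(x)$ is convex and weak$^*$-closed also supplies a fact the paper uses in (i) without proof.

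\textbf{A small simplification in (ii).} Once $\langle x^*,d\rangle=\Phi(d)$, the defining inequality of $\partial^+\Phi(d)$ already reads $\Phi(y)\le\langle x^*,y\rangle$ for all $y$. The detour through $d+ty$ and superadditivity is therefore unnecessary, though harmless.
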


\begin{proof}   Let    $x\in X$ and direction $d\in X$.    To derive \eqref{eq:deriV1},   by  Theorem \ref{YDOBS},  for any $t \ge   0$,  set
	\[
\psi(t):=\theta(x+td)=\min_{v\in K}\bigl(f_v(x)+t\langle F^*v,d\rangle\bigr).
	\]
	Fix $t>0$. For each $v\in K(x)$, we have $\psi(0)=\theta(x)=f_v(x)$, and hence
	\[
	\frac{\psi(t)-\psi(0)}{t}\le\frac{f_v(x)+t\langle F^*v,d\rangle-f_v(x)}{t}=\langle F^*v,d\rangle .
	\]
	Thus
	\begin{equation}\label{eq:deriV2}
		\limsup_{t\downarrow0}\frac{\psi(t)-\psi(0)}{t}\le\min_{v\in K(x)}\langle F^*v,d\rangle .
	\end{equation}
	On the other hand, for each $t$ there exists $v_t\in K$ such that $\psi(t)= f_{v_t}(x)+t\langle F^*v_t,d\rangle$. This together with  $f_{v_t}(x)\ge  \psi(0)$  implies that
	\begin{equation}\label{eq:deriV3}
	\frac{\psi(t)-\psi(0)}{t}
	= \frac{f_{v_t}(x)+t\langle F^*v_t,d\rangle- \psi(0)}{t}
	= \langle F^*v_t,d\rangle +\frac{f_{v_t}(x)- \psi(0)}{t}  \ge  \langle F^*v_t,d\rangle.
		\end{equation}
 	Take a sequence $t_n\downarrow0$ and set $v_n:=v_{t_n}$. By weak$^*$-compactness of $K$, there exists a convergent subnet; assume without loss of generality that $v_n\xrightarrow{w^*} \bar v\in K$. Because $\psi$ is continuous and $t_n\downarrow0$,  we get that   $\psi (t_n)\to \psi (0)$, and
	$f_{v_n}(x)= \psi (t_n)-t_n\langle F^*v_n,d\rangle\to \psi(0)$.
	By weak$^*$-continuity of $v\mapsto f_v(x)$, we have $f_{v_n}(x) \to f_{\bar v}(x)$. Hence
	$f_{\bar v}(x)= \psi(0)$, which means $\bar v\in K(x)$. Moreover, $\langle F^*v_n,d\rangle\to\langle F^*\bar v,d\rangle$. Taking the limit inferior in (\ref{eq:deriV3}) gives
	\begin{equation}\label{eq:deriV4}
		\liminf_{t\downarrow0}\frac{\psi(t)- \psi(0)}{t}\ge\langle F^*\bar v,d\rangle\ge\min_{v\in K(x)}\langle F^*v,d\rangle .
	\end{equation}
	Combining (\ref{eq:deriV2}) and (\ref{eq:deriV4}), we obtain
	\[
	\theta'(x;d)= \lim_{t\downarrow0} \frac{\psi (t)- \psi(0)}{t} = \min_{v\in K(x)}\langle F^*v,d\rangle .
	\]
	It is clear that 
\begin{equation}\label{EderiV1}
	\partial^+ \Phi(0) = \{ x^* \in X^* : \Phi(d) \le \langle x^*, d \rangle,\ \forall d \in X \}.
	\end{equation}
	
	Take any $v \in K(\bar{x})$ and set $x^* = F^*v$. By (\ref{eq:deriV1}), $\Phi(d) \le \langle F^*v, d \rangle$ for all $d \in X$, so  it follows from  (\ref{EderiV1})  that     $x^* \in \partial^+ \Phi(0)$. Thus $F^*(K(\bar{x})) \subseteq \partial^+ \Phi(0)$. Since $K(\bar{x})$ is a weak$^*$-compact convex set and $F^*: Y^* \to X^*$ is weak$^*$-weak$^*$ continuous and linear, its image $F^*(K(\bar{x}))$ is a weak$^*$-compact convex set in $X^*$.   This implies   that  $ \operatorname{co}\bigl(F^*(K(\bar{x}))\bigr) = F^*(K(\bar{x}))$.
	Since	  $\partial^+ \Phi(0)$ is   convex and weak$^*$-closed,  we have
\begin{equation}\label{EderiV2}
	\operatorname{co}(F^*(K(\bar{x}))) \subseteq \partial^+ \Phi(0).
	\end{equation}
	
	Let $x^* \in \partial^+ \Phi(0)$. We need to show $x^* \in F^*(K(\bar{x}))$. Suppose, to the contrary, that $x^* \notin F^*(K(\bar{x}))$. Since $F^*(K(\bar{x}))$ is a weak$^*$-compact convex set,  by the Hahn-Banach separation theorem,  there exist a nonzero $d_0 \in X$ and a real number $\alpha$ such that
	\[
	\langle x^*, d_0 \rangle < \alpha \le \langle y^*, d_0 \rangle, \quad \forall y^* \in F^*(K(\bar{x})).
	\]
	In particular,
	\[
	\langle x^*, d_0 \rangle < \inf_{v \in K(\bar{x})} \langle F^*v, d_0 \rangle = \Phi(d_0).
	\]
	But this contradicts (\ref{EderiV1}). Hence $x^* \in F^*(K(\bar{x}))$, and so  $\partial^+ \Phi(0) \subseteq	 F^*(K(\bar{x})) .$
	 Combining this with (\ref{EderiV2}), we get
	\begin{equation}\label{eq:deriV5}
		\partial^+ \Phi(0) = F^*(W(\bar{x})) = \operatorname{co}\bigl(F^*(W(\bar{x}))\bigr).
	\end{equation}
	
	For   any   $d \in X$, we prove that    
	\begin{equation}\label{eq:deriV6}
		\partial^+ \Phi(d) = \{ x^* \in \partial^+ \Phi(0) : \langle x^*, d \rangle = \Phi(d) \}.
	\end{equation}
	Indeed, let $x^* \in \partial^+ \Phi(d)$. Then for all $y \in X$, $\Phi(y) \le \Phi(d) + \langle x^*, y-d \rangle$. Taking $y=0$ and using $\Phi(0)=0$ gives $0 \le \Phi(d) - \langle x^*, d \rangle$, i.e., $\langle x^*, d \rangle \le \Phi(d)$. Taking $y=2d$ and using positive homogeneity $\Phi(2d)=2\Phi(d)$ gives $2\Phi(d) \le \Phi(d) + \langle x^*, d \rangle$, i.e., $\Phi(d) \le \langle x^*, d \rangle$. Hence $\langle x^*, d \rangle = \Phi(d)$. Moreover, for any $y \in X$, $\Phi(y) \le \Phi(d) + \langle x^*, y-d \rangle = \langle x^*, d \rangle + \langle x^*, y-d \rangle = \langle x^*, y \rangle$, so $x^* \in \partial^+ \Phi(0)$.
	
	Conversely, if $x^* \in \partial^+ \Phi(0)$ and $\langle x^*, d \rangle = \Phi(d)$, then for any $y$, $\Phi(y) \le \langle x^*, y \rangle = \Phi(d) + \langle x^*, y-d \rangle$,   and so $x^* \in \partial^+ \Phi(d)$.
	Thus (\ref{eq:deriV6}) holds.
		Substituting (\ref{eq:deriV5}) into (\ref{eq:deriV6}) yields
	\[
	\partial^+ \Phi(d) = \left\{ x^* \in F^*(W(\bar{x})) : \langle x^*, d \rangle = \Phi(d) \right\}.
	\]
\end{proof}

\begin{theorem}  \label{TTXEQ} 
	Let $A \subseteq X$ be a nonempty convex compact set  and  $\bar x \in E_w$. Set
	\[
	W(\bar{x}) = \bigl\{ y^* \in K : F^* y^* \in -N_A(\bar{x}) \bigr\},
	\]
 	Let $W(\bar{x})^+ = \{z \in Y : \langle y^*, z \rangle \ge 0,\ \forall y^* \in W(\bar{x})\}$ be the dual cone of $W(\bar{x})$.
	Then  the following statements hold:
		\begin{itemize}
			\item[(i)]   For any direction $d \in X$,
		\begin{equation}\label{DZQW+}
		\theta'(\bar{x}; d) = \min_{y^* \in W(\bar{x})} \langle y^*, F(d) \rangle.  
	\end{equation}
		
		\item[(ii)]  $W(\bar{x})$ is a nonempty weak$^*$-compact convex set, and satisfies the ordering cone inclusion
		\[
		C \subseteq W(\bar{x})^+ \subseteq Y.  
		\]
		
\item[(iii)]  For any direction $d \in X$,
		\[
		\theta'(\bar{x}; d) < 0 \quad \Longleftrightarrow \quad F(d) \notin W(\bar{x})^+.   
		\]
	\item[(iv)]  The zero-directional-derivative cone $D(\bar{x}):=\{d\in X:\theta'(\bar{x};d)=0\}$ satisfies
		\[
		D(\bar{x})=\Big\{d\in X:\min_{y^{*}\in W(\bar{x})}\langle y^{*},F(d)\rangle=0\Big\}
		=F^{-1}\Big(\operatorname{bd}\big(W(\bar{x})^{+}\big)\Big),
		\]
		where $\operatorname{bd}$ denotes the topological boundary in $Y$.
\end{itemize}
\end{theorem}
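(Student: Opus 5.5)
The plan is to reduce everything to Theorem~\ref{AXDSS}. The key observation is that at a weakly efficient point the active index set $K(\bar x)$ coincides with $W(\bar x)$. Since $A$ is compact, $\sigma_A(-F^*v)$ is finite for every $v\in K$, so Theorem~\ref{AXDSS} applies and gives
\[
\theta'(\bar x;d)=\min_{v\in K(\bar x)}\langle F^*v,d\rangle .
\]
By Lemma~\ref{YL:thetaprop}(iii), $\theta(\bar x)=0$. Hence $v\in K(\bar x)$ if and only if $\langle F^*v,\bar x\rangle=\inf_{a\in A}\langle F^*v,a\rangle$. Exactly as in the derivation of (\ref{XCB4}) in the proof of Theorem~\ref{BSXEQ}, this is equivalent to $-F^*v\in N_A(\bar x)$. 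So $K(\bar x)=W(\bar x)$, and rewriting $\langle F^*v,d\rangle=\langle v,F(d)\rangle$ gives (\ref{DZQW+}), which is (i).

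For (ii), nonemptiness and weak$^*$-compactness of $W(\bar x)=K(\bar x)$ come directly from Theorem~\ref{AXDSS}. I would also verify them independently: $N_A(\bar x)$ is a weak$^*$-closed convex cone, and $F^*$ is linear and weak$^*$-continuous, so $W(\bar x)$ is the intersection of the weak$^*$-compact convex set $K$ with a weak$^*$-closed convex set. For the cone inclusion, $W(\bar x)\subseteq K\subseteq C^+$, so every $z\in C$ satisfies $\langle y^*,z\rangle\ge 0$ for all $y^*\in W(\bar x)$; that is, $C\subseteq W(\bar x)^+$.

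Statement (iii) is then immediate from (i), because the minimum is attained. We have $\theta'(\bar x;d)<0$ if and only if some $y^*\in W(\bar x)$ satisfies $\langle y^*,F(d)\rangle<0$, which holds if and only if $F(d)\notin W(\bar x)^+$.

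The main step is (iv), and it comes down to describing the interior of $W(\bar x)^+$. The set $W(\bar x)^+$ is a norm-closed convex cone, so its boundary is $W(\bar x)^+\setminus\operatorname{int}W(\bar x)^+$. I will prove the claim
\[
z\in\operatorname{int}W(\bar x)^+ \iff m(z):=\min_{y^*\in W(\bar x)}\langle y^*,z\rangle>0 .
\]
\emph{If $m(z)>0$.} Every $y^*\in K$ has $\|y^*\|\le 1$. So for $\|u\|<m(z)$ we get $\langle y^*,z+u\rangle\ge m(z)-\|u\|>0$, which puts a ball around $z$ inside $W(\bar x)^+$.

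\emph{If $z+\varepsilon B_Y\subseteq W(\bar x)^+$.} Applying $y^*$ to $z-\varepsilon u$ for $u\in B_Y$ gives $\langle y^*,z\rangle\ge\varepsilon\|y^*\|$. The minimum $m(z)$ is attained at some $y^*_0\in W(\bar x)\subseteq K$. By Proposition~\ref{RDCxl}, $0\notin K$, so $y^*_0\neq 0$ and therefore $m(z)\ge\varepsilon\|y_0^*\|>0$.

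The use of $0\notin K$ here is the point I expect to be delicate. Once the claim holds, $d\in D(\bar x)$ if and only if $m(F(d))=0$, if and only if $F(d)\in W(\bar x)^+\setminus\operatorname{int}W(\bar x)^+=\operatorname{bd}\big(W(\bar x)^+\big)$. This gives $D(\bar x)=F^{-1}\big(\operatorname{bd}(W(\bar x)^+)\big)$, and the first equality in (iv) is just (i).
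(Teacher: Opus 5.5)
Your proposal is correct and follows the paper's proof essentially step by step. You identify the active set $K(\bar x)$ of Theorem~\ref{AXDSS} with $W(\bar x)$ through the normal-cone characterization of minimizers of $\langle F^*v,\cdot\rangle$ on $A$, and you get (iv) by showing $\operatorname{int} W(\bar x)^+=\{z:\min_{y^*\in W(\bar x)}\langle y^*,z\rangle>0\}$ using $0\notin K$; your direct estimate $\langle y^*,z\rangle\ge\varepsilon\|y^*\|$ is a slightly cleaner version of the paper's contradiction argument with the perturbed point $y+\delta y_0$, but it rests on the same idea.
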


\begin{proof}
(i).	Since $\bar{x} \in E_w$, we have $\theta(\bar{x}) = 0$.
	By Theorem \ref{AXDSS},
	\begin{equation}\label{DZQW1}
		\theta'(\bar{x} ;d)=\min_{v\in K(\bar{x})}\langle F^*v,d\rangle ,
	\end{equation}
	where $K(\bar{x}) = \{ v \in K : 0 = \theta(\bar{x}) = f_v(\bar{x}) \}$ and $f_v(\bar{x})= \langle F^*v,\bar{x} \rangle+\sigma_A(-F^*v)$.
	
	Clearly,
	\begin{equation}\label{DZQW2}
	f_v( \bar{x} ) = 0 \iff \langle F^* v, \bar{x} \rangle = -\sigma_A(-F^*v).
	\end{equation}
	By definition of support function, $-\sigma_A(-F^* v) = \inf_{a \in A} \langle F^* v, a \rangle$. Then (\ref{DZQW2}) is equivalent to
	\begin{equation}\label{DZQW3}
	\langle F^* v, \bar{x} \rangle = \inf_{a \in A} \langle F^* v, a \rangle.
	\end{equation}
	  It is easy to see that (\ref{DZQW3}) is equivalent to $-F^* v \in N_A(\bar{x})$, i.e., $F^* v \in -N_A(\bar{x})$. Therefore,
	\[
	K(\bar{x}) = \{ v \in K : F^* v \in -N_A(\bar{x}) \} =: W(\bar{x}).
	\]
	Substituting into (\ref{DZQW1}) gives
	\[
	\theta'(\bar{x}; d) = \min_{v \in W(\bar{x})} \langle v, F(d) \rangle.
	\]

	(ii).    It follows from    Theorem  \ref{YDOBS} and $\bar{x} \in E_w$  that $\theta(\bar{x}) = \max_{v \in K} f_v(\bar{x}) = 0$.  This means that  there exists $v_0 \in K$ with $f_{v_0}(\bar{x}) = 0$, i.e., $v_0 \in K(\bar{x}) = W(\bar{x})$,  which implies that $W(\bar{x})$ is nonempty.
	Since $K$ is a weak$^*$-compact convex set, and $F^*$ is   linear and continuous, it is easy to see that 
	$W(\bar{x}) = \bigl\{ y^* \in K : F^* y^* \in -N_A(\bar{x}) \bigr\}$ is a weak$^*$-compact convex set.

  Let $v \in W(\bar{x}) \subseteq K = \operatorname{co}(S(C^+)) \subseteq C^+$. For any $c \in C$, from $v \in C^+$ we have $\langle v, c \rangle \ge 0$. Hence $c \in W(\bar{x})^+$, and so $C \subseteq W(\bar{x})^+$.

		(iii).  For any direction $d \in X$, we conclude from    (\ref{DZQW+})  that
	\[
	\theta'(\bar{x}; d) < 0
	\;\Longleftrightarrow\;
	\exists\, v \in W(\bar{x}) \text{ such that } \langle v, F(d) \rangle < 0
	\;\Longleftrightarrow\;
	F(d) \notin W(\bar{x})^+.
	\]
 
(iv).		From (\ref{DZQW+}), $D(\bar{x})=\{d\in X : \min_{v\in W(\bar{x})}\langle v,F(d)\rangle =0\}$. Define $\psi(y):=\min_{v\in W(\bar{x})}\langle v,y\rangle$ for $y\in Y$.
		Since $W(\bar{x})\subseteq K\subseteq B_{Y^{*}}$, $\psi$ is a $1$-Lipschitz continuous function, and positively homogeneous. The positive polar cone $W(\bar{x})^{+}=\{y:\psi(y)\ge0\}$ is a closed convex cone.
	
Next, we prove that
	\begin{equation}\label{DZQW4}
 \operatorname{int}(W(\bar{x})^{+}) = \{y\in Y:\psi(y)>0\}.
	 \end{equation}
 In fact,    if $\psi(y)>0$, by continuity of $\psi$ there exists an open ball $B(y,\varepsilon)$ such that $\psi (z) \ge 0$ for any $z \in B(y,\varepsilon)$, and so
  $B(y,\varepsilon)\subseteq W(\bar{x})^{+}$.     This means that   $y \in  \operatorname{int}(W(\bar{x})^{+})$.  
	
	Conversely, let $y\in\operatorname{int}(W(\bar{x})^{+})$. Then there exists $\varepsilon>0$ such that $B(y,\varepsilon)\subseteq W(\bar{x})^{+}$. Suppose, to the contrary, that there exists $v_0\in W(\bar{x})$ with $\langle v_0,y\rangle=0$. 
		By    Proposition   \ref{RDCxl}, we get \(0 \notin K\). This together with  $W(\bar{x})\subseteq K$  implies that    $v_0\neq0$.
	There exists $y_0 \in Y$ such that $\langle v_0, y_0 \rangle \ne 0$. Without loss of generality, assume $\langle v_0, y_0 \rangle < 0$.
	Set $z:=y+ \delta y_0$ with $0<\delta<\varepsilon/\|y_0\|$. Then $\|z-y\|=\delta\|y_0\|<\varepsilon$, so $z\in B(y,\varepsilon)\subseteq W(\bar{x})^{+}$. But
	\[
	\langle v_0,z\rangle = \langle v_0,y\rangle + \delta \langle v_0,y_0\rangle = \delta \langle v_0,y_0\rangle < 0,
	\]
	contradicting $z\in W(\bar{x})^{+}$. Hence $\langle v,y\rangle>0$ for all $v\in W(\bar{x})$. By weak$^*$-compactness of $W(\bar{x})$ and weak$^*$-continuity of $v\mapsto\langle v,y\rangle$, the minimum $\psi(y)>0$. Therefore, (\ref{DZQW4}) is true.
	
In view of  (\ref{DZQW4}), the boundary is
	\[
	\operatorname{bd}(W(\bar{x})^{+}) = W(\bar{x})^{+}\setminus\operatorname{int}(W(\bar{x})^{+})
	= \{y:\psi(y)\ge0,\ \psi(y)\not>0\} = \{y:\psi(y)=0\}.
	\]
	Hence the condition $\min_{v\in W(\bar{x})}\langle v,F(d)\rangle=0$ is equivalent to $\psi(F(d))=0$, i.e., $F(d)\in\operatorname{bd}(W(\bar{x})^{+})$. Therefore,
$	D(\bar{x}) = F^{-1}\Big(\operatorname{bd}\big(W(\bar{x})^{+}\big)\Big). $
\end{proof}

\begin{theorem}\label{ZSQE}
	Let $A \subseteq X$ be a nonempty convex compact set.
	Then for any $x\in A$,  
	we have
	\[
	x\in E_w \quad\Longleftrightarrow\quad \theta'(x;d)\ge 0,\ \forall d\in T_A(x).
	\]
\end{theorem}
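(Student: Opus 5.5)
The plan is to prove both directions using only the nonnegativity of $\theta$ on $A$ (Lemma \ref{YL:thetaprop}(i)), its zero set (Lemma \ref{YL:thetaprop}(iii)), and its concavity and Lipschitz continuity (Theorem \ref{YL:thetaprop2}). The explicit derivative formula of Theorem \ref{TTXEQ} is not needed.

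\emph{Necessity.} Let $x\in E_w$, so $\theta(x)=0$, and fix $d\in T_A(x)$. By the definition of the tangent cone there are $t_k\downarrow 0$ and $d_k\to d$ with $x+t_kd_k\in A$. Then $\theta(x+t_kd_k)\ge 0$ by Lemma \ref{YL:thetaprop}(i). The Lipschitz bound of Theorem \ref{YL:thetaprop2}(ii) gives
\[
\frac{\theta(x+t_kd)-\theta(x)}{t_k}\ \ge\ \frac{\theta(x+t_kd_k)}{t_k}-\|F\|\,\|d_k-d\|\ \ge\ -\|F\|\,\|d_k-d\|.
\]
Letting $k\to\infty$ and using the existence of $\theta'(x;d)$ (Theorem \ref{JBXZDS}(i)), we obtain $\theta'(x;d)\ge 0$.

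\emph{Sufficiency.} Argue by contraposition. Suppose $x\in A\setminus E_w$, so $\theta(x)>0$ by Lemma \ref{YL:thetaprop}(iii). The paper notes that $E_w=A\cap\widehat A$, and $E_w$ is nonempty under compactness. This is the main point to secure: a continuous $\theta$ attains its minimum on the compact set $A$, but one needs that minimum to be $0$. To get this, use the dual representation of Theorem \ref{YDOBS} together with $\theta$ attaining its minimum over $A$. More directly, a linear scalarization $\langle y^*,F(\cdot)\rangle$ with $y^*\in S(C^+)$ attains its minimum on the compact set $A$, and any such minimizer is weakly efficient. So pick any $z\in E_w$ and set $d=z-x\in A-x\subseteq T_A(x)$ (Remark \ref{RtDFxl}). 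Concavity (Theorem \ref{YL:thetaprop2}(i)) gives, for $t\in(0,1]$,
\[
\theta(x+td)\ge (1-t)\theta(x)+t\theta(z)=(1-t)\theta(x).
\]
This inequality only bounds the difference quotient from below. For an upper bound, use the monotonicity from the proof of Theorem \ref{JBXZDS}(i): $\theta'(x;d)=\sup_{t>0}Q(t)\ge Q(1)=\theta(z)-\theta(x)=-\theta(x)$. That is also a lower bound, so the supremum representation alone does not yield a sign.

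The correct route is the concave-function inequality $\theta(z)\le \theta(x)+\theta'(x;z-x)$. It follows from $Q(t)\ge Q(1)$ for $t\le 1$ only in the wrong direction, but it holds because the difference quotient is nonincreasing in $t$, so $\theta'(x;d)=\lim_{t\downarrow0}Q(t)\ge Q(1)$. Rechecking shows this is the same statement, so supergradient-type inequalities for concave functions give $\theta'(x;d)\ge \theta(z)-\theta(x)$, which cannot force $\theta'(x;d)<0$.

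I therefore expect sufficiency to be the real obstacle. My first attempt will use the formula of Theorem \ref{AXDSS}: $\theta'(x;d)=\min_{v\in K(x)}\langle F^*v,d\rangle$. For $v\in K(x)$ one has $\langle F^*v,x\rangle-\inf_A\langle F^*v,\cdot\rangle=\theta(x)>0$. Choose $a\in A$ minimizing $\langle F^*v,\cdot\rangle$ over the compact set $A$ and set $d=a-x\in T_A(x)$. Then $\langle F^*v,d\rangle=-\theta(x)<0$, so $\theta'(x;d)\le -\theta(x)<0$. This contradicts the hypothesis and proves $x\in E_w$. It also shows that $\varphi(x)<0$ off $E_w$, a fact that will be useful later.
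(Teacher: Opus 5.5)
Your necessity argument coincides with the paper's. Your final sufficiency argument is also correct, but it takes a different route.

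The paper invokes the weak domination property of the compact set $F(A)$ (Luc's result plus Remark~\ref{RngXEQ}) to obtain $y\in E_w$ with $F(x)-F(y)\in\operatorname{int}C$, and sets $d_0=y-x$. It then uses that every $v\in K\subseteq C^+\setminus\{0\}$ is strictly positive on $\operatorname{int}C$. Hence $\langle F^*v,d_0\rangle<0$ for \emph{all} active $v$, and $\theta'(x;d_0)<0$ by Theorem~\ref{AXDSS}.

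You instead fix a single active $v\in K(x)$ and move toward a minimizer $a$ of $\langle F^*v,\cdot\rangle$ over $A$. This route uses no order-theoretic input: neither weak domination nor the positivity of $K$ on $\operatorname{int}C$. It needs only three things:
\begin{itemize}
\item the attainment in Theorem~\ref{YDOBS};
\item compactness of $A$;
\item the elementary half $\theta'(x;d)\le\langle F^*v,d\rangle$ of the derivative formula, which is immediate from $\theta\le f_v$ with equality at $x$.
\end{itemize}
It also gives a quantitative statement the paper's argument does not: $\theta'(x;a-x)\le-\theta(x)$, hence $\varphi(x)\le-\theta(x)/\operatorname{diam}(A)$ for $x\in A\setminus E_w$. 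Incidentally, $\theta(a)\le f_v(a)=0$, so $a\in E_w$ as well. The paper's appeal to weak domination is itself unnecessary: $x\notin E_w$ directly gives some $y\in A$ with $F(x)-F(y)\in\operatorname{int}C$.

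In a write-up, drop the exploratory middle passage. Also drop the opening claim that nonnegativity, concavity and Lipschitz continuity suffice. As you discovered, concavity only yields lower bounds such as $\theta'(x;z-x)\ge\theta(z)-\theta(x)$. Those properties alone cannot give sufficiency: $t\mapsto\min\{1,2-t\}$ on $[0,2]$ has zero one-sided derivatives at $0$ but vanishes only at $2$. The dual representation is what makes your argument work.
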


\begin{proof}
	  $ \Rightarrow $.
		Let $x\in E_w$. Then $\theta(x)=0$ and $\theta(y)\ge 0$ for all $y\in A$. Take any $d\in T_A(x)$. Then there exist sequences $t_n\downarrow0$ and $d_n\to d$ such that $x_n:=x+t_n d_n\in A$. By Lipschitz continuity of $\theta$,
	\[
	0\le\frac{\theta(x_n)}{t_n}= \frac{\theta(x+t_n d_n)-\theta(x)}{t_n}
	\le \frac{\theta(x+t_n d)-\theta(x)}{t_n}+\|F\|\,\|d_n-d\|.
	\]
	Letting $n\to\infty$, the first term on the right tends to $\theta'(x;d)$, and the second term tends to $0$. Hence $\theta'(x;d)\ge 0$.  
	
	$ \Leftarrow $. 
 Assume that $x\in A$  and
	\begin{equation}\label{DAE1}
		\theta'(x;d)\ge 0,\ \; \forall d\in T_A(x).
	\end{equation}
	Suppose $x \notin E_w$, i.e., $x\in A\setminus E_w$. Since $A \subseteq X$ is nonempty compact, by Proposition 2.4.10 of \cite{Luc}, $F(A)$ satisfies the weak domination property (WDP).  By   Remark \ref{RngXEQ},     there exists $y\in E_w$ such that
	\[
	F(x)-F(y)\in\operatorname{int}C .
	\]
	Set $d_0:=y-x$. Since $y\in A$ and $A$ is convex, $d_0 \in T_A(x)$. By Theorem \ref{AXDSS},
	\begin{equation}\label{DAE+1}
	\theta'(x;d_0)=\min_{v\in K(x)}\langle F^*v,d_0 \rangle .
	\end{equation}
	where $K(x):=\{v\in K:\theta(x)=f_v(x)\}$  and   $f_v(x)= \langle F^*v,x\rangle+\sigma_A(-F^*v)$.
	Take any $v\in K(x)\subset K\subset C^+\setminus\{0\}$,  then
\begin{equation}\label{DAE2}
	\langle F^*v,d_0 \rangle=\langle v,F(y)-F(x)\rangle=-\langle v,F(x)-F(y)\rangle .
	\end{equation}
	Since $F(x)-F(y)\in\operatorname{int}C$ and $v\in C^+\setminus\{0\}$, we have $\langle v,F(x)-F(y)\rangle>0$. It follows from  (\ref{DAE2})  that $\langle F^*v,d_0 \rangle<0$. This together with   (\ref{DAE+1}) implies that  $\theta'(x;d_0)<0$, contradicting (\ref{DAE1}). Thus $x\in E_w$.  
\end{proof}

The following lemma can be derived from Danskin's theorem. For the reader's convenience, we present its proof.

\begin{lemma}  \label{LXcEQ} 
	Let $Q \subseteq Y^*$ be a weak$^*$-compact set and let $c, g : Q \to \mathbb{R}$ be weak$^*$-continuous functions. For every $t \ge 0$, define
	\[
	\lambda(t) = \min_{y^* \in Q} \bigl[ c(y^*) + t g(y^*) \bigr].
	\]
	Note that $\lambda(0) = \min_{y^*\in Q} c(y^*)$, and set
	$M = \{ y^* \in Q : c(y^*) = \lambda(0) \}$.
	The weak$^*$-continuity of $c$ and the weak$^*$-compactness of $Q$ guarantee that $M$ is a nonempty weak$^*$-compact set.
	Then
	\[
	\lim_{t \downarrow 0} \frac{\lambda(t) - \lambda(0)}{t} = \min_{y^* \in M} g(y^*).
	\]
\end{lemma}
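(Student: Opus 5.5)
The plan is a standard Danskin-type argument, establishing an upper bound on the limsup and a lower bound on the liminf of the difference quotient. It is essentially the same argument used in the proof of Theorem \ref{AXDSS}, now in the abstract setting.

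For the first step, fix $t>0$ and any $y^*\in M$. Then
\[
\lambda(t)\le c(y^*)+t\,g(y^*)=\lambda(0)+t\,g(y^*).
\]
Hence $(\lambda(t)-\lambda(0))/t\le g(y^*)$. Taking the minimum over $y^*\in M$ gives
\[
\limsup_{t\downarrow0}\frac{\lambda(t)-\lambda(0)}{t}\le \min_{y^*\in M} g(y^*).
\]
This minimum exists because $M$ is nonempty and weak$^*$-compact and $g$ is weak$^*$-continuous.

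For the second step, for each $t>0$ choose a minimizer $y_t^*\in Q$ of $c+tg$. Such a minimizer exists because $c+tg$ is weak$^*$-continuous on the compact set $Q$. Since $c(y_t^*)\ge\lambda(0)$, we get
\[
\frac{\lambda(t)-\lambda(0)}{t}=g(y_t^*)+\frac{c(y_t^*)-\lambda(0)}{t}\ge g(y_t^*).
\]
Now take a sequence $t_n\downarrow0$ along which the quotient converges to the liminf. By weak$^*$-compactness of $Q$, pass to a subnet with $y_{t_n}^*\xrightarrow{w^*}\bar y^*\in Q$.

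The third step is to identify the limit as an element of $M$, and this is where the only care is needed. We have
\[
\lambda(0)\le c(y_{t_n}^*)=\lambda(t_n)-t_n g(y_{t_n}^*)\le \lambda(0)+t_n\bigl(g(y_0^*)-g(y_{t_n}^*)\bigr)
\]
for a fixed $y_0^*\in M$. Since $g$ is continuous on a compact set, it is bounded, so $c(y_{t_n}^*)\to\lambda(0)$. Weak$^*$-continuity of $c$ then gives $c(\bar y^*)=\lambda(0)$, that is, $\bar y^*\in M$. By continuity of $g$, $g(y_{t_n}^*)\to g(\bar y^*)$ along the subnet. Because the quotients along $t_n$ converge to the liminf, the subnet of quotients has the same limit. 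Therefore
\[
\liminf_{t\downarrow0}\frac{\lambda(t)-\lambda(0)}{t}\ge g(\bar y^*)\ge\min_{M}g.
\]

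Combining the upper bound from the first step with this lower bound proves that the limit exists and equals $\min_{y^*\in M}g(y^*)$. The subtle point to handle correctly is the passage from a sequence to a subnet. It is resolved by first fixing a sequence realizing the liminf and then noting that any subnet of a convergent sequence of reals has the same limit.
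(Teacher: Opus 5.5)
Your proposal is correct and follows essentially the paper's Danskin-type argument. The upper bound uses a point of $M$. The lower bound uses minimizers $y_t^*$ of $c+tg$, derives $c(y_t^*)\to\lambda(0)$ from boundedness of $g$, and takes a weak$^*$ cluster point lying in $M$. The only difference is cosmetic: you bound the liminf directly along a sequence realizing it, whereas the paper argues by contradiction with a fixed $\varepsilon>0$.
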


\begin{proof}
 
	Since $Q$ is weak$^*$-compact and $c, g$ are weak$^*$-continuous, for each $t \ge 0$ the function $y^* \mapsto c(y^*) + t g(y^*)$ is weak$^*$-continuous, so the minimum is attained and $\lambda(t)$ is well-defined and finite.
	Let $m = \min_{y^* \in M} g(y^*)$. As $M$ is weak$^*$-compact and $g$ is weak$^*$-continuous, $m$ is finite and there exists $\bar y^* \in M$ with $g(\bar y^*) = m$.

	For any $t > 0$,  it follows from $\bar y^* \in M$ that
$	\lambda(t) \le c(\bar y^*) + t g(\bar y^*) = \lambda(0) + t m,$ 
and so
	\[
	\frac{\lambda(t) - \lambda (0)}{t} \le m \quad \forall t>0.
	\]
	Taking the limit superior as $t \downarrow 0$ gives
	\begin{equation}  \label{EtyuK1}
		\limsup_{t \downarrow 0} \frac{\lambda(t) - \lambda(0)}{t} \le m.  
	\end{equation}

In view of     (\ref{EtyuK1}),   it suffices to prove
\begin{equation}  \label{EtyuK++}
	\liminf_{t \downarrow 0} \frac{\lambda(t) - \lambda(0)}{t} \ge m.
	\end{equation}
	For any $\varepsilon > 0$, we must find $\delta > 0$ such that for all $0 < t < \delta$,
	\[
	\frac{\lambda(t) - \lambda(0)}{t} > m - \varepsilon.
	\]
	Suppose not, then there exists a sequence $t_k \downarrow 0$ such that
	\begin{equation}  \label{EtyuK2}
		\frac{\lambda(t_k) - \lambda(0)}{t_k} \le m - \varepsilon, \quad k=1,2,\dots.  
	\end{equation}
	For each $k$,    there exists $y_k^* \in Q$ such that
	\begin{equation}  \label{EtyuK3}
	\lambda (t_k) = c(y_k^*) + t_k g(y_k^*).  
	\end{equation}
	  We conclude from    $c(y_k^*) \ge \lambda(0)$,    (\ref{EtyuK2})  and  (\ref{EtyuK3})  that
 		\begin{equation}  \label{EtyuK4}
 				t_k g(y_k^*) \le c(y_k^*) + t_k g(y_k^*) - \lambda(0) \le t_k (m - \varepsilon).	
 				\end{equation}
	Cancelling $t_k > 0$ gives
	\begin{equation}  \label{EtyuK5}
		g(y_k^*) \le m - \varepsilon, \quad \forall k   \in \mathbb{N}. 
		\end{equation}
	On the other hand, from (\ref{EtyuK4})  and boundedness of $g$ on $Q$ (say $|g| \le \eta$),
	\[
	0 \le c(y_k^*) - \lambda(0) \le t_k (m - \varepsilon - g(y_k^*)) \le t_k (|m| + \eta).
	\]
	Letting $k \to \infty$, we get
	\begin{equation}  \label{EtyuK6}
		c(y_k^*) \to \lambda(0).  
	\end{equation}
	Since $Q$ is weak$^*$-compact, $\{y_k^*\}$ has a convergent subnet; without loss of generality, $y_k^* \to \hat y^* \in Q$. By weak$^*$-continuity of $c$ and  (\ref{EtyuK6}),
$	c(\hat y^*) = \lambda(0),$
and	so $\hat y^* \in M$. Using weak$^*$-continuity of $g$, taking the limit in (\ref{EtyuK5}) yields
$	g(\hat y^*) \le m - \varepsilon.$
	But $\hat y^* \in M$ implies $g(\hat y^*) \ge m$, a contradiction. Hence  (\ref{EtyuK++}) holds.
\end{proof}

\begin{theorem}  \label{YXDS}
	Let   $A\subseteq X$ be nonempty compact convex  and  $\bar x\in E_w$.  Then for any direction $d\in X$, 
	\[
	\theta'(\bar x;d)=\sup_{a\in A(\bar x)}\min_{y^*\in S^*(a)}\langle y^*,F(d)\rangle,
	\]
	where
	\[
	A(\bar x)=\bigl\{a\in A:\inf_{y^*\in S(C^+)}\langle y^*,F(\bar x)-F(a)\rangle=0\bigr\},
	\]
	\[
	S^*(a)=\bigl\{y^*\in S(C^+):\langle y^*,F(\bar x)-F(a)\rangle=0\bigr\}.
	\]
\end{theorem}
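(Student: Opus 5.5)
The plan is to prove two inequalities, using the dual representation over the weak$^*$-compact set $K$ rather than over $S(C^+)$, since $S(C^+)$ need not be weak$^*$-compact. Set $z_a:=F(\bar x)-F(a)$ and $g_a(x):=\inf_{y^*\in S(C^+)}\langle y^*,F(x)-F(a)\rangle=\min_{v\in K}\langle v,F(x)-F(a)\rangle$, where the equality is (\ref{DmlB1}). Then $\theta=\sup_{a\in A}g_a$, and $A(\bar x)=\{a\in A:g_a(\bar x)=0=\theta(\bar x)\}$ is the active index set. For $a\in A(\bar x)$ put $M(a):=\{v\in K:\langle v,z_a\rangle=0\}$. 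Write $h(a):=\min_{y^*\in S^*(a)}\langle y^*,F(d)\rangle$ for the quantity in the statement, and $\tilde h(a):=\min_{v\in M(a)}\langle v,F(d)\rangle$ for its analogue over $K$.

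\emph{Step 1 (lower bound).} Fix $a\in A(\bar x)$. Since $\theta\ge g_a$ and $\theta(\bar x)=g_a(\bar x)=0$, we get $\theta'(\bar x;d)\ge \liminf_{t\downarrow0} g_a(\bar x+td)/t$. Apply Lemma \ref{LXcEQ} with $Q=K$, $c(v)=\langle v,z_a\rangle$ and $g(v)=\langle v,F(d)\rangle$. Its minimizing set is exactly $M(a)$, so $g_a'(\bar x;d)=\tilde h(a)$ and hence $\theta'(\bar x;d)\ge\sup_{a\in A(\bar x)}\tilde h(a)$.

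\emph{Step 2 (identifying $\tilde h$ with the min over $S^*(a)$).} Since $S^*(a)\subseteq M(a)$, we have $\tilde h(a)\le \inf_{S^*(a)}\langle\cdot,F(d)\rangle$. For the reverse inequality:
\begin{itemize}
\item $M(a)$ is an exposed face of $K$.
\item The weak$^*$-continuous functional $\langle\cdot,F(d)\rangle$ attains its minimum over $M(a)$ at an extreme point of $M(a)$, which is then an extreme point of $K$.
\item By Milman's converse to Krein--Milman, this extreme point lies in $\overline{S(C^+)}^{w^*}$.
\end{itemize}
I expect this to be the delicate point. In finite dimensions $S(C^+)$ is compact and the argument closes directly. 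In general, a weak$^*$ limit of unit functionals may have norm $<1$. To handle this, I would approximate such a limit from $S(C^+)$ by elements nearly annihilating $z_a$, and read the minimum in the statement as an infimum if it is not attained.

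\emph{Step 3 (upper bound).} By Theorem \ref{TTXEQ}, $\theta'(\bar x;d)=\min_{v\in W(\bar x)}\langle v,F(d)\rangle$. Moreover, $v\in W(\bar x)$ if and only if $v\in K$ and $\langle v,z_a\rangle\le0$ for all $a\in A$. So it suffices to show
\[
\min_{v\in W(\bar x)}\langle v,F(d)\rangle\le \sup_{a\in A(\bar x)}\tilde h(a).
\]
I would express the left side by penalization as
\[
\min_{v\in K}\ \sup_{\lambda\ge0,\,a\in A}\bigl[\langle v,F(d)\rangle+\lambda\langle v,z_a\rangle\bigr].
\]
The set $\{\lambda z_a:\lambda\ge 0,\ a\in A\}$ is a convex cone because $A$ is convex, and the bracket is linear in $v$ and affine in $w=\lambda z_a$. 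Hence Sion's theorem (Lemma \ref{SJDJX}) lets me swap min and sup, giving
\[
\sup_{\lambda,a}\ \min_{v\in K}\langle v,F(d)+\lambda z_a\rangle .
\]
If $a\notin A(\bar x)$, then $\min_K\langle\cdot,z_a\rangle<0$, and since $\bar x\in E_w$ such $a$ contribute nothing as $\lambda\to\infty$; their contribution is dominated by the $\lambda=0$ term, which is $\le\tilde h(a')$ for any $a'$. For $a\in A(\bar x)$, the quantity $\min_K\langle v,F(d)+\lambda z_a\rangle$ is nondecreasing in $\lambda$. Dividing by $\lambda$ and applying Lemma \ref{LXcEQ} with $t=1/\lambda$ shows that its limit is $\tilde h(a)$. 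This yields the desired inequality.

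Combining Steps 1--3 gives the formula. The main obstacles are the minimax swap in Step 3, in particular checking Sion's hypotheses on the non-compact cone side and the monotone limit, and the passage from $K$ to $S(C^+)$ in Step 2.
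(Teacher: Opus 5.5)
\textbf{There is a genuine gap in Step 3, and it cannot be closed: the reverse inequality, and hence the statement as written, is false.}

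Your Steps 1 and 2 are sound and give $\theta'(\bar x;d)\ge\sup_{a\in A(\bar x)}\min_{y^*\in S^*(a)}\langle y^*,F(d)\rangle$. Step 2 holds at least in finite dimensions, where $M(a)=\operatorname{co}S^*(a)$.

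The failing step is your claim that an index $a\notin A(\bar x)$ is dominated by the $\lambda=0$ term. The map $\lambda\mapsto\min_{v\in K}\langle v,F(d)+\lambda z_a\rangle$ is concave and tends to $-\infty$. However, it can rise above its value at $\lambda=0$ for some finite $\lambda>0$. So the supremum over the cone $\mathbb{R}_+(F(\bar x)-F(A))$ need not be approached along rays through $A(\bar x)$.

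Here is a counterexample. Take $X=Y=\mathbb{R}^2$ with the Euclidean norm, $F$ the identity, $C=\mathbb{R}^2_+$, $A=\{(s,1-s):s\in[0,1]\}$, $\bar x=(\tfrac12,\tfrac12)\in E_w$ and $d=(1,0)$.
\begin{itemize}
\item For small $t>0$ one computes $\theta(\bar x+td)=t/2$, attained uniquely at $a=(\tfrac12+\tfrac t2,\tfrac12-\tfrac t2)$. Hence $\theta'(\bar x;d)=\tfrac12$.
\item This agrees with Theorem~\ref{TTXEQ}, since here $W(\bar x)=\{(r,r):\tfrac12\le r\le\tfrac{1}{\sqrt2}\}$.
\item On the other hand, $A(\bar x)=\{\bar x\}$, because for $s\neq\tfrac12$ one coordinate of $\bar x-a$ is negative. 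So $S^*(\bar x)=S(C^+)$, and the right-hand side of the statement is $\min_{y^*\in S(C^+)}y^*_1=0<\tfrac12$.
\item In your minimax expression, take $a=(1,0)\notin A(\bar x)$ and $\lambda=1$. Then $F(d)+z_a=(\tfrac12,\tfrac12)$, and $\min_{v\in K}\langle v,(\tfrac12,\tfrac12)\rangle=\tfrac12>0=\min_{v\in K}\langle v,F(d)\rangle$, which is exactly the domination your argument assumed away.
\end{itemize}

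The paper's proof breaks at the corresponding point, so you should not look there for a repair. It takes maximizers $a_k$ with $\theta(\bar x+t_kd)=h_{a_k}(\bar x+t_kd)$, passes to a limit $a_k\to a^*\in A(\bar x)$, and asserts $\limsup_k h'_{a_k}(\bar x;d)\le h'_{a^*}(\bar x;d)$ by an unproved upper semicontinuity. In the example:
\begin{itemize}
\item $a_k=(\tfrac12+\tfrac{t_k}2,\tfrac12-\tfrac{t_k}2)\notin A(\bar x)$, and the minimizing set is $M_{a_k}=\{(1,0)\}$, so $h'_{a_k}(\bar x;d)=1$ for every $k$.
\item Meanwhile $a^*=\bar x$ and $h'_{\bar x}(\bar x;d)=0$.
\end{itemize}
Upper semicontinuity fails because the sets $M_{a_k}$ do not fill out $S^*(\bar x)$ in the limit.

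In general only the inequality $\ge$ is true. The exact value is the full expression you had before your reduction,
$\sup_{w\in\mathbb{R}_+(F(\bar x)-F(A))}\min_{v\in K}\langle v,F(d)+w\rangle=\min_{v\in W(\bar x)}\langle v,F(d)\rangle$,
and this can be strictly larger than the right-hand side of the statement.
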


\begin{proof}
	For each $z\in Y$, define
	\[
	\psi(z):=\min_{y^*\in S(C^+)}\langle y^*,z\rangle.
	\]
	The map $y^*\mapsto\langle y^*,z\rangle$ is weak$^*$-continuous, and $S(C^+)$ is weak$^*$-compact, so the minimum is attained.  Since $\psi$ is the pointwise minimum of a family of linear functionals, $\psi$  is  concave and $1$-Lipschitz continuous: for any $z_1,z_2\in Y$,
	\[
	|\psi(z_1)-\psi(z_2)|
	\le\max_{y^*\in S(C^+)}|\langle y^*,z_1-z_2\rangle|
	\le\|z_1-z_2\|.
	\]

	For each $a\in A$, define
	\[
	h_a(x):=\psi(F(x)-F(a))=\min_{y^*\in S(C^+)}\langle y^*,F(x)-F(a)\rangle.
	\]
	From the concavity and Lipschitz properties of $\psi$, $h_a\left( \cdot \right)$ is also concave and $\|F\|$-Lipschitz continuous. By definition,
	\[
	\theta(x)  = \sup_{a \in A} \inf_{y^* \in S(C^+)} \langle y^*, F(x) - F(a) \rangle =\sup_{a\in A}h_a(x).
	\]
	Since $A$ is compact and for fixed $x$ the map $a\mapsto h_a(x)$ is continuous ($\psi$ and $F$ are continuous), the supremum is attained:
\begin{equation}\label{Ebvd1}
	\theta(x)=\max_{a\in A}h_a(x),\qquad\forall x\in X.  
 \end{equation}
	Due to $\bar x\in E_w$, we obtain  $\theta(\bar x)=0$.  It follows from  (\ref{Ebvd1})  that
\begin{equation}\label{Ebvd+1}
h_a(\bar x)\le  0,  \quad  \forall  a\in A . 
  \end{equation}

 It is clear that $h_{\bar x}(\bar x)=\psi(0)=0$. Define
	\[
	A(\bar x):=\{a\in A:h_a(\bar x)=0\}.
	\]
	By continuity of $a\mapsto h_a(\bar x)$ and compactness of $A$, $A(\bar x)$ is a nonempty compact set ($\bar x\in A(\bar x)$).
	
 	Fix $a\in A$ and a direction $d\in X$, let
	\[
\psi_a(t):=h_a(\bar x+td)=\min_{y^*\in S(C^+)}\bigl[\langle y^*,F(\bar x)-F(a)\rangle+t\langle y^*,F(d)\rangle\bigr],\quad t\ge0.
	\]
	Set
	\[
	c_a(y^*):=\langle y^*,F(\bar x)-F(a)\rangle,\qquad g(y^*):=\langle y^*,F(d)\rangle.
	\]
	Then $\psi _a(t)=\min_{y^*\in S(C^+)}[c_a(y^*)+t\,g(y^*)]$.    By  Lemma   \ref{LXcEQ}, we have 
\begin{equation}\label{Ebvd2}
	h'_a(\bar x;d):=\lim_{t\downarrow0}\frac{h_a(\bar x+td)-h_a(\bar x)}{t}   =\lim_{t\downarrow0}\frac{\psi _a(t)-\psi_a(0)}{t} =\min_{y^*\in M_a}g(y^*),
 \end{equation}
	where
	\[
	M_a:=\{y^*\in S(C^+):c_a(y^*)=\psi_a(0)=h_a(\bar x)\}.
	\]
	
If $a\in A(\bar x)$, then $h_a(\bar x)=0$, and by definition $c_a(y^*)\ge0$ for all $y^*\in S(C^+)$. Hence
\[
M_a=\{y^*\in S(C^+):c_a(y^*)=0\}=S^*(a).
\]
This together with  (\ref{Ebvd2})  implies that 
\begin{equation}\label{Ebvd3}
h'_a(\bar x;d)=\min_{y^*\in S^*(a)}\langle y^*,F(d)\rangle. 
 \end{equation}
	
Since $\theta(x)\ge h_a(x)$ for all $x$, and $\theta(\bar x)=h_a(\bar x)=0$ for $a\in A(\bar x)$, for any $t>0$,
	\[
	\frac{\theta(\bar x+td)-\theta(\bar x)}{t}
	\ge\frac{h_a(\bar x+td)-h_a(\bar x)}{t}.
	\]
	Letting $t\downarrow0$, the right-hand side tends to $h'_a(\bar x;d)$, so
	\[
	\liminf_{t\downarrow0}\frac{\theta(\bar x+td)}{t}\ge h'_a(\bar x;d),\quad\forall a\in A(\bar x).
	\]
	Taking the supremum over $a\in A(\bar x)$ gives
\begin{equation}\label{Ebvd4}
	\liminf_{t\downarrow0}\frac{\theta(\bar x+td)}{t}\ge\sup_{a\in A(\bar x)}h'_a(\bar x;d) .  
\end{equation}

		Take any sequence $t_k\downarrow0$. By (\ref{Ebvd1}), for each $k$ there exists $a_k\in A$ such that
\begin{equation}\label{Ebvd5}	
	\theta(\bar x+t_k d)=h_{a_k}(\bar x+t_k d).
\end{equation} 
	Since $A$ is compact, we can extract a subsequence (still denoted $a_k$) converging to some $a^*\in A$. By joint continuity of $h_a(x)$ in $(a,x)$,
	\[
	\theta(\bar x+t_k d)=h_{a_k}(\bar x+t_k d)\to h_{a^*}(\bar x).
	\]
	But by continuity of $\theta$, $\theta(\bar x+t_k d)\to\theta(\bar x)=0$, so $h_{a^*}(\bar x)=0$, i.e., $a^*\in A(\bar x)$.
	
	Since  $h_a\left( \cdot \right)$  is a  concave function, we get
\begin{equation}\label{Ebvd6}	
	 h_a(\bar x+td)\le h_a(\bar x)+t\,h'_a(\bar x;d), \quad  \forall  t>0. 
\end{equation} 
	 Thanks to (\ref{Ebvd+1}), (\ref{Ebvd5}) and (\ref{Ebvd6}), we have
	\[
	\theta(\bar x+t_k d)=h_{a_k}(\bar x+t_k d)
	\le h_{a_k}(\bar x)+t_k\,h'_{a_k}(\bar x;d)
	\le t_k\,h'_{a_k}(\bar x;d),
	\]
and so
\begin{equation}\label{Ebvd7}	
	\frac{\theta(\bar x+t_k d)}{t_k}\le h'_{a_k}(\bar x;d).
\end{equation} 
For any $a\in A(\bar x)$,  it follows from  (\ref{Ebvd3})  that
	\[
\zeta (a):=h'_a(\bar x;d)=\min_{y^*\in S^*(a)}\langle y^*,F(d)\rangle. 
\]
It is easy to see that  $a\mapsto \zeta(a)$ is upper semicontinuous. By upper semicontinuity of $\zeta$ and $a_k\to a^*$,
	\[
	\limsup_{k\to\infty} \zeta(a_k)\le \zeta(a^*)=h'_{a^*}(\bar x;d).
	\]
Combining this with (\ref{Ebvd7}), we get
	\[
	\limsup_{k\to\infty}\frac{\theta(\bar x+t_k d)}{t_k}  \le   	\limsup_{k\to\infty}   h'_{a_k}(\bar x;d)
	\le h'_{a^*}(\bar x;d)
	\le\sup_{a\in A(\bar x)}h'_a(\bar x;d).
	\]
	Since the sequence $\{t_k\}$ was arbitrary,
\begin{equation}\label{Ebvd8}	
	\limsup_{t\downarrow0}\frac{\theta(\bar x+td)}{t}
	\le\sup_{a\in A(\bar x)}h'_a(\bar x;d).  
\end{equation} 
		 We conclude from (\ref{Ebvd3}),  (\ref{Ebvd4})  and  (\ref{Ebvd8})  that
	 	\[
	\theta'(\bar x;d)=\lim_{t\downarrow0}\frac{\theta(\bar x+td)}{t}
	=\sup_{a\in A(\bar x)}h'_a(\bar x;d) =\sup_{a\in A(\bar x)}\min_{y^*\in S^*(a)}\langle y^*,F(d)\rangle.
	\]
\end{proof}

\begin{theorem} 
	Assume that $A \subseteq X$ is a nonempty compact convex set and $\bar x \in E_w$.
 	Then the following statements hold:
	\begin{itemize}
		\item[(i)]  If a direction $d \in T_A(\bar x)$ satisfies $F(d) \in -C$, then $\theta'(\bar x; d) = 0$.
			\item[(ii)]  If a direction $d \in X$ satisfies $\theta'(\bar x; d) > 0$, then there exists $a \in A(\bar x)$ such that   $F(a) - F(\bar x + t d) \notin \operatorname{int} C$  for any $t > 0$.
	\end{itemize}
\end{theorem}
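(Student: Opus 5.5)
For (i), the plan is to squeeze $\theta'(\bar x;d)$ between two bounds. Since $\bar x\in E_w$ and $d\in T_A(\bar x)$, the forward implication of Theorem~\ref{ZSQE} gives $\theta'(\bar x;d)\ge 0$. For the upper bound I will use the formula of Theorem~\ref{TTXEQ}(i):
\[
\theta'(\bar x;d)=\min_{y^*\in W(\bar x)}\langle y^*,F(d)\rangle ,
\]
where $W(\bar x)$ is nonempty by Theorem~\ref{TTXEQ}(ii). Every $y^*\in W(\bar x)\subseteq K\subseteq C^+$ satisfies $\langle y^*,F(d)\rangle\le 0$, because $F(d)\in -C$. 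Hence $\theta'(\bar x;d)\le 0$, and so $\theta'(\bar x;d)=0$.

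An alternative argument for the upper bound uses the $C_X$-monotonicity of $\theta$ from Lemma~\ref{YL:thetaprop}(ii). Since $F(-td)\in C$, we have $\bar x+td\le_{C_X}\bar x$ for every $t>0$. Therefore $\theta(\bar x+td)\le\theta(\bar x)=0$, and dividing by $t$ and letting $t\downarrow 0$ gives $\theta'(\bar x;d)\le 0$.

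For (ii), I will apply Theorem~\ref{YXDS}, which gives
\[
0<\theta'(\bar x;d)=\sup_{a\in A(\bar x)}\min_{y^*\in S^*(a)}\langle y^*,F(d)\rangle .
\]
Since the supremum is positive, some $a\in A(\bar x)$ satisfies $\min_{y^*\in S^*(a)}\langle y^*,F(d)\rangle>0$.

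Next I need a functional in $S^*(a)$. The set $S^*(a)$ is nonempty because $a\in A(\bar x)$ means $\inf_{y^*\in S(C^+)}\langle y^*,F(\bar x)-F(a)\rangle=0$. This infimum of a weak$^*$-continuous function over the weak$^*$-compact set $S(C^+)$ is attained. So I can pick $y^*\in S(C^+)$ with $\langle y^*,F(\bar x)-F(a)\rangle=0$ and $\langle y^*,F(d)\rangle>0$.

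For every $t>0$ this gives
\[
\langle y^*,F(a)-F(\bar x+td)\rangle=-\langle y^*,F(\bar x)-F(a)\rangle-t\langle y^*,F(d)\rangle=-t\langle y^*,F(d)\rangle<0 .
\]
Since $y^*\in C^+$ is nonnegative on $C$, this shows $F(a)-F(\bar x+td)\notin C$, and in particular $F(a)-F(\bar x+td)\notin\operatorname{int}C$.

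The only delicate step is justifying that $S^*(a)\neq\emptyset$, that is, that the infimum defining $A(\bar x)$ is attained. This follows from the weak$^*$-compactness already used in the proof of Theorem~\ref{YXDS}. The remaining steps are direct sign computations.
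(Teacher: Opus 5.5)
Your proof is correct. Part (ii) is essentially the paper's argument. The paper also invokes Theorem~\ref{YXDS}, picks $a_0\in A(\bar x)$ with $\min_{y^*\in S^*(a_0)}\langle y^*,F(d)\rangle>0$ and some $y_0^*\in S^*(a_0)$, and does the same sign computation, but phrases it as a contradiction using positivity of $y_0^*\in C^+\setminus\{0\}$ on $\operatorname{int}C$. Your direct version gives the slightly stronger conclusion $F(a)-F(\bar x+td)\notin C$.

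In (i) the paper gets the upper bound from the formula of Theorem~\ref{YXDS}. Your main route uses Theorem~\ref{TTXEQ}(i) instead. This is the same idea, with the mild advantage that the index set $W(\bar x)$ lies in the genuinely weak$^*$-compact set $K$. Your alternative via the $C_X$-monotonicity of Lemma~\ref{YL:thetaprop}(ii) is genuinely more elementary. It uses no dual formula and no compactness of $A$. The lower bound only needs the necessity half of Theorem~\ref{ZSQE}, which the paper itself notes is valid without compactness. So (i) holds for any nonempty closed convex $A$ under the standing assumptions.

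One caution about the step you flagged. The paper asserts that $S(C^+)$ is weak$^*$-compact, but this fails in general when $Y$ is infinite-dimensional. Take $Y=\mathbb{R}\times\ell^2$ with $C=\{(s,y):s\ge\|y\|\}$, so that $C^+=C$. Then $(1/\sqrt{2},e_n/\sqrt{2})\in S(C^+)$ converges weak$^*$ to $(1/\sqrt{2},0)$, which has norm $1/\sqrt{2}$. So your justification, shared with the paper, is not sound as stated.

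Nonemptiness of $S^*(a)$ is nevertheless true. Indeed, $a\in A(\bar x)$ forces $F(\bar x)-F(a)\in C\setminus\operatorname{int}C$. A nonzero functional supporting $C$ at this point lies in $C^+$ and vanishes there, because $C$ is a cone; after normalization it belongs to $S^*(a)$.

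Better still, your monotonicity argument settles (ii) almost immediately. It shows that $\theta'(\bar x;d)>0$ forces $F(d)\notin -C$. Hence $F(\bar x)-F(\bar x+td)=-tF(d)\notin C$ for every $t>0$. Since $\bar x\in A(\bar x)$ trivially, the choice $a=\bar x$ already works, without Theorem~\ref{YXDS}.
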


\begin{proof}  For any direction $d\in X$,   it follows from  Theorem   \ref{YXDS}  that
\begin{equation}\label{EkptB1}
	\theta'(\bar x;d)=\sup_{a\in A(\bar x)}\min_{y^*\in S^*(a)}\langle y^*,F(d)\rangle,
 \end{equation}
	where
	\[
	A(\bar x)=\bigl\{a\in A:\inf_{y^*\in S(C^+)}\langle y^*,F(\bar x)-F(a)\rangle=0\bigr\},
	\]
	\[
	S^*(a)=\bigl\{y^*\in S(C^+):\langle y^*,F(\bar x)-F(a)\rangle=0\bigr\}.
	\]
	 (i).	Let $d \in T_A(\bar x)$ and suppose $F(d) \in -C$. For any $y^* \in C^+$,  we have $\langle y^*, -F(d) \rangle \ge 0$, hence $\langle y^*, F(d) \rangle \le 0$.
	Therefore, for each fixed $a \in A(\bar x)$, due to $S^*(a) \subseteq C^+$,
	\[
	\min_{y^* \in S^*(a)} \langle y^*, F(d) \rangle \le 0.
	\]
	Taking the supremum over $a \in A(\bar x)$ and using  (\ref{EkptB1}) gives
\begin{equation}\label{EkptB2}
	\theta'(\bar x; d) = \sup_{a \in A(\bar x)} \min_{y^* \in S^*(a)} \langle y^*, F(d) \rangle \le 0.
 \end{equation}
	On the other hand, by Theorem~\ref{ZSQE} and $\bar x \in E_w$, we have $\theta'(\bar x; d) \ge 0$. Combining with    (\ref{EkptB2})    yields $\theta'(\bar x; d) = 0$.

(ii).	Fix $d \in X$ with $\theta'(\bar x; d) > 0$. Consider the function
	\[
	\psi(a) := \min_{y^* \in S^*(a)} \langle y^*, F(d) \rangle, \quad a \in A(\bar x).
	\]
 It follows from  (\ref{EkptB1})  that
	\[
	\theta'(\bar x; d) = \sup_{a \in A(\bar x)} \min_{y^* \in S^*(a)} \langle y^*, F(d) \rangle > 0,
	\]
and so	there exists $a_0 \in A(\bar x)$ such that
\begin{equation}\label{EkptB3}
	\psi(a_0) = \min_{y^* \in S^*(a_0)} \langle y^*, F(d) \rangle > 0.
 \end{equation}
	
	Suppose that there exists $t_0 > 0$ such that
	\[
	F(a_0) - F(\bar x + t_0 d) \in \operatorname{int} C.
	\]
	Set $z_0 := F(a_0) - F(\bar x) - t_0 F(d) \in \operatorname{int} C$.
	Using $a_0 \in A(\bar x)$ and picking   $y_0^* \in S^*(a_0)$, we have
$	\langle y_0^*, F(\bar x) - F(a_0) \rangle = 0  $.
	Then
	\[
	\langle y_0^*, z_0 \rangle = \langle y_0^*, F(a_0) - F(\bar x) \rangle - t_0 \langle y_0^*, F(d) \rangle = -t_0 \langle y_0^*, F(d) \rangle.
	\]
	Because $z_0 \in \operatorname{int} C$ and $y_0^* \in C^+ \setminus \{0\}$, we   have $\langle y_0^*, z_0 \rangle > 0$. Hence
	\[
	-t_0 \langle y_0^*, F(d) \rangle > 0 \quad \Longrightarrow \quad \langle y_0^*, F(d) \rangle < 0.
	\]
	Consequently,
	\[
	\psi(a_0) = \min_{y^* \in S^*(a_0)} \langle y^*, F(d) \rangle \le \langle y_0^*, F(d) \rangle < 0,
	\]
	contradicting (\ref{EkptB3}). Therefore, for every $t > 0$, $F(a_0) - F(\bar x + t d) \notin \operatorname{int} C$.
\end{proof}

\begin{theorem} \label{ZAQW}  Let $\bar{x}\in E_w$. 	Then the following hold:
	\begin{enumerate}
		\item[(i)]  	$
		T_{E_w}(\bar{x}) \subseteq T_A(\bar{x})\cap T_{\widehat{A}}(\bar{x}) \subseteq \{d\in T_A(\bar{x}):\theta'(\bar{x};d)=0\}.
	$
		
				\item[(ii)]  If $\theta$ has a local error bound at   $\bar x$, then 
					\[
				T_{E_w}(\bar{x})=T_A(\bar{x})\cap T_{\widehat{A}}(\bar{x})=\{d\in T_A(\bar{x}):\theta'(\bar{x};d)=0\},
				\]
				
			\end{enumerate}	 
\end{theorem}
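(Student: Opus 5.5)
For (i), the first inclusion is immediate. Since $E_w = A\cap\widehat A$, we have $E_w\subseteq A$ and $E_w\subseteq \widehat A$. The tangent cone (the sequential definition of $T_D(x)$, valid for arbitrary sets) is monotone under set inclusion, so $T_{E_w}(\bar x)\subseteq T_A(\bar x)\cap T_{\widehat A}(\bar x)$.

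For the second inclusion, take $d\in T_A(\bar x)\cap T_{\widehat A}(\bar x)$. Theorem~\ref{ZSQE}(\(\Rightarrow\)) gives $\theta'(\bar x;d)\ge 0$. That direction of the proof only uses $\theta\ge0$ on $A$ and Lipschitz continuity, so it does not need compactness of $A$. For the reverse inequality we use $d\in T_{\widehat A}(\bar x)$: choose $t_k\downarrow0$ and $d_k\to d$ with $\bar x+t_kd_k\in\widehat A=X\setminus(A+\intr C_X)$. By Theorem~\ref{TxEQ}(i), $\theta(\bar x+t_kd_k)\le 0$. Then, using $\theta(\bar x)=0$ and the Lipschitz constant $\|F\|$,
\[
\frac{\theta(\bar x+t_kd)}{t_k}\le \frac{\theta(\bar x+t_kd_k)}{t_k}+\|F\|\,\|d_k-d\|\le \|F\|\,\|d_k-d\|\to0 .
\]
Hence $\theta'(\bar x;d)\le0$, so $\theta'(\bar x;d)=0$.

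For (ii), it remains to show $\{d\in T_A(\bar x):\theta'(\bar x;d)=0\}\subseteq T_{E_w}(\bar x)$. Let $\tau,\delta$ be the local error bound constants at $\bar x$, and take $d\in T_A(\bar x)$ with $\theta'(\bar x;d)=0$. Choose $t_k\downarrow0$ and $d_k\to d$ with $x_k:=\bar x+t_kd_k\in A$. For $k$ large, $x_k\in B(\bar x,\delta)$, and the error bound gives
\[
d(x_k,E_w)\le\tau\,\theta(x_k)\le \tau\bigl(\theta(\bar x+t_kd)+\|F\|t_k\|d_k-d\|\bigr).
\]
Dividing by $t_k$ and using $\theta(\bar x+t_kd)/t_k\to\theta'(\bar x;d)=0$ yields $d(x_k,E_w)/t_k\to0$. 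Pick $e_k\in E_w$ with $\|x_k-e_k\|\le d(x_k,E_w)+t_k^2$, and set $d_k':=(e_k-\bar x)/t_k$. Then $\|d_k'-d_k\|\le d(x_k,E_w)/t_k+t_k\to0$, so $d_k'\to d$ and $\bar x+t_kd_k'=e_k\in E_w$. Therefore $d\in T_{E_w}(\bar x)$. Combining this with the chain in (i) collapses all three sets to equality.

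The main point to watch is in (i): the membership $\bar x+t_kd_k\in\widehat A$ must give $\theta\le0$, which is exactly the contrapositive of Theorem~\ref{TxEQ}(i). We also must check that Theorem~\ref{ZSQE}'s forward direction is used without its compactness hypothesis, which is legitimate because that part of its proof uses only nonnegativity of $\theta$ on $A$ and its Lipschitz continuity. Everything else is routine sequence manipulation.
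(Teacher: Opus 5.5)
Your proof is correct and follows essentially the same route as the paper. In (i) you use monotonicity of tangent cones, the compactness-free necessity part of Theorem~\ref{ZSQE} to get $\theta'(\bar x;d)\ge 0$, and the contrapositive of Theorem~\ref{TxEQ}(i) with the Lipschitz estimate to get $\theta'(\bar x;d)\le 0$; in (ii) you combine the local error bound with near-closest points $e_k\in E_w$ (slack $t_k^2$), exactly as the paper does.
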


 \begin{proof} 
Denote
\[
S_1=T_{E_w}(\bar{x}),\quad S_2=T_A(\bar{x})\cap T_{\widehat{A}}(\bar{x}),\quad S_3=\{d\in T_A(\bar{x}):\theta'(\bar{x};d)=0\}.
\]

(i).  Take any $d\in S_1=T_{E_w}(\bar{x})$. Then there exist sequences $\{t_k\}\subset\mathbb{R}_+$ with $t_k\to0^+$, and $\{d_k\}\subset X$ with $d_k\to d$, such that      $\bar{x}+t_k d_k\in E_w$       for all $k\in \mathbb{N}$.
 Since $E_w=A\cap\widehat{A}\subseteq A$, we have $\bar{x}+t_k d_k\in A$, so by definition of the tangent cone, $d\in T_A(\bar{x})$.
Similarly, $E_w\subseteq\widehat{A}$ gives $\bar{x}+t_k d_k\in\widehat{A}$, and so $d\in T_{\widehat{A}}(\bar{x})$.
Thus $d\in T_A(\bar{x})\cap T_{\widehat{A}}(\bar{x})=S_2$.  This yields     that $S_1\subseteq S_2$.

Take any $d\in S_2$, i.e., $d\in T_A(\bar{x})\cap T_{\widehat{A}}(\bar{x})$. 
By $d\in T_A(\bar{x})$, $\bar{x}\in E_w$ and Theorem \ref{ZSQE} (necessity does not require $A$ to be compact), we have
\begin{equation}\label{Euiv1}
\theta'(\bar{x};d)\ge 0.  
\end{equation}

Since $d\in T_{\widehat{A}}(\bar{x})$, there exist sequences $t_k\downarrow0$ and $d_k\to d$ such that
\[
y_k:=\bar{x}+t_k d_k\in\widehat{A}=X\setminus(A+\operatorname{int}C_X), \quad  \forall k \in \mathbb{N}.
\]
    We conclude from  Theorem  \ref{TxEQ} (i)  that
\begin{equation}\label{Euiv2}
\theta(y_k)\le 0,  \quad  \forall k \in \mathbb{N}.
\end{equation}
Using the   Lipschitz continuity of $\theta$ (constant $L=\|F\|$, Theorem \ref{YL:thetaprop2}), for each $k \in \mathbb{N}$,
\[
|\theta(\bar{x}+t_k d_k)-\theta(\bar{x}+t_k d)|\le L t_k\|d_k-d\|,
\]
and so
\[
\theta(\bar{x}+t_k d_k)=\theta(\bar{x}+t_k d)+r_k,\quad |r_k|\le L t_k\|d_k-d\|.
\]
Thus
\[
\frac{\theta(\bar{x}+t_k d_k)}{t_k}=\frac{\theta(\bar{x}+t_k d)}{t_k}+\frac{r_k}{t_k},
\]
with $|r_k|/t_k\le L\|d_k-d\|\to0$.   Due to  $\theta(\bar{x})=0$,   the definition of directional derivative gives $\lim_{k\to\infty}\frac{\theta(\bar{x}+t_k d)}{t_k}=\theta'(\bar{x};d)$, therefore
\[
\lim_{k\to\infty}\frac{\theta(y_k)}{t_k}=\lim_{k\to\infty}\frac{\theta(\bar{x}+t_k d_k)}{t_k}=\theta'(\bar{x};d).
\]
From (\ref{Euiv2}), each term $\theta(y_k)/t_k\le0$, so the limit $\theta'(\bar{x};d)\le0$. Together with (\ref{Euiv1}) we get $\theta'(\bar{x};d)=0$, i.e., $d\in S_3$.  This means that $S_2\subseteq S_3$.

(ii).         It suffices to prove  that   $S_3\subseteq S_1$.      Since $\theta$ has a local error bound at   $\bar x$,   there exist $\tau$ and $\delta>0$ such that
\begin{equation}\label{Euiv3}
d(x,E_w)\le \tau\,\theta(x),\quad\forall x\in A\cap B(\bar{x},\delta).
\end{equation}

Take any $d\in S_3$. Then $d\in T_A(\bar{x})$ and $\theta'(\bar{x};d)=0$.
By $d\in T_A(\bar{x})$, there exist $t_k\downarrow0$, $d_k\to d$ such that   $x_k:=\bar{x}+t_k d_k\in A$ for any $k \in \mathbb{N}$.
Using Lipschitz continuity of $\theta$, we obtain
\[
\theta(x_k)=\theta(\bar{x}+t_k d_k)\le\theta(\bar{x}+t_k d)+L t_k\|d_k-d\|.
\]
Since $\theta'(\bar{x};d)=0$ and $\theta(\bar{x})=0$, we have $\theta(\bar{x}+t_k d)=t_k\varepsilon_k$ with $\varepsilon_k\to0$. Hence
\[
0\le\theta(x_k)\le t_k\varepsilon_k+L t_k\|d_k-d\|=t_k\delta_k,
\]
where $\delta_k:=\varepsilon_k+L\|d_k-d\|\to0$.  
For sufficiently large $k$, $x_k\in A\cap B(\bar{x},\delta)$.  It follows from  (\ref{Euiv3})  that
$ d(x_k,E_w)\le\tau\theta(x_k)\le\tau t_k\delta_k.$
Then we can choose $z_k\in E_w$ such that
\[
\|x_k-z_k\|<d(x_k,E_w)+t_k^2\le\tau t_k\delta_k+t_k^2.
\]
Consider the vector
\[
\frac{z_k-\bar{x}}{t_k}= \frac{z_k-x_k}{t_k}+\frac{x_k-\bar{x}}{t_k}= \frac{z_k-x_k}{t_k}+d_k.
\]
Since $\|z_k-x_k\|/t_k<\tau\delta_k+t_k\to0$ and $d_k\to d$, we obtain
\[
\lim_{k\to\infty}\frac{z_k-\bar{x}}{t_k}=d.
\]
Because $z_k\in E_w$, $t_k\downarrow0$, and $(\bar{z}_k-\bar{x})/t_k\to d$, the definition of the tangent cone yields $d\in T_{E_w}(\bar{x})=S_1$. By arbitrariness, $S_3\subseteq S_1$.
 \end{proof}   

\begin{corollary} Let   $A \subseteq X$ a  nonempty compact   polyhedron. 
	 Then	for any  $\bar{x} \in E_w$, we have
	\[
	T_{E_w}(\bar{x})=T_A(\bar{x})\cap T_{\widehat{A}}(\bar{x})=\{d\in T_A(\bar{x}):\theta'(\bar{x};d)=0\}.
	\]
\end{corollary}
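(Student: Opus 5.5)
The corollary follows from Theorem~\ref{ZAQW}(ii) once we know that $\theta$ has a local error bound at every $\bar x\in E_w$. Since $A$ is a compact polyhedron, Theorem~\ref{thm:localglobal} shows it is enough to prove a global error bound on $A$. By Theorem~\ref{XLWCJK}, (i)$\Leftrightarrow$(ii), this in turn reduces to the linear regularity of the pair $\{A,\widehat A\}$.

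\textbf{Step 1: a polyhedral description of $E_w$.} Because $A$ is a polytope, $F(A)$ is a polytope in $Y$; in particular it lies in the finite-dimensional affine span of $F(A)$. I would use the scalarization characterization of weak efficiency for convex sets: $\bar x\in E_w$ iff there is $v\in K$ with $F^*v\in -N_A(\bar x)$. This is exactly $W(\bar x)\neq\emptyset$, which follows from Theorem~\ref{YDOBS} together with $\theta(\bar x)=0$. Since $A$ has finitely many faces and each normal cone $N_A(\bar x)$ depends only on the face whose relative interior contains $\bar x$, weak efficiency is a property of faces. So $E_w$ is a finite union of faces $G_1,\dots,G_m$ of $A$.

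\textbf{Step 2: a piecewise-linear structure for $\theta$.} On $A$ I would write $\theta(x)=\min_{v\in K}\bigl(\langle F^*v,x\rangle+\sigma_A(-F^*v)\bigr)$. Since $\sigma_A(-F^*v)=-\min_{j}\langle F^*v,a_j\rangle$ over the finitely many vertices $a_j$, we get
\[
\theta(x)=\max_{j}\;\min_{v\in K}\langle v,F(x-a_j)\rangle ,
\]
using the attainment argument from the proof of Theorem~\ref{YXDS}. Each inner function is the concave sublinear map $\psi(z)=\min_{v\in K}\langle v,z\rangle$ evaluated at $F(x-a_j)$. This map need not be polyhedral, because $C$ is a general cone.

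\textbf{Step 3, the main obstacle: the local error bound itself.} I would avoid Hoffman-type arguments and argue directly and locally on each face, in the spirit of the asymptotic condition (xi) combined with Theorem~\ref{XLWCJK}(4) (valid since $A$ is compact). Fix $\bar x\in E_w$ and $x\in A$ near $\bar x$. Since $A$ is polyhedral, $A\cap B(\bar x,\delta)=(\bar x+T_A(\bar x))\cap B(\bar x,\delta)$, so $x=\bar x+d$ with $d\in T_A(\bar x)$.

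By concavity, $\theta(x)\le\theta'(\bar x;d)$, but I need a lower bound. For that I would use Theorem~\ref{TTXEQ}(i): $\theta(\bar x+d)\ge$ the value computed at the active vertex set, and $\theta'(\bar x;d)=\min_{y^*\in W(\bar x)}\langle y^*,F(d)\rangle$. I would then show that $d\mapsto\theta'(\bar x;d)$ restricted to the polyhedral cone $T_A(\bar x)$ satisfies $\theta'(\bar x;d)\ge c\,d(d,D_0)$ with $D_0=\{d\in T_A(\bar x):\theta'(\bar x;d)=0\}$, by compactness of the unit sphere of the finite-dimensional cone.

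Two points must still be established, and I expect this to be the delicate part:
\begin{enumerate}
\item $D_0$ coincides locally with $E_w-\bar x$, which I would get from the face structure of Step 1;
\item $\theta(\bar x+d)=\theta'(\bar x;d)$ for small $d$ in the cone, which holds since the active index set $K(\bar x+d)$ stays within $W(\bar x)$ for $d$ small along polyhedral directions.
\end{enumerate}
If the second point fails for a non-polyhedral $C$, I would fall back on verifying condition (xi) face by face and then appeal to Theorem~\ref{XLWCJK}(4) together with Theorem~\ref{ZAQW}(ii).
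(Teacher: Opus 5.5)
Your skeleton matches the paper's: obtain a local error bound at $\bar x$, then apply Theorem~\ref{ZAQW}(ii). The paper simply imports the global error bound for polyhedral $A$ from Corollary~5 of \cite{LiuNgYang2009}. Your own proof of that error bound has a genuine gap.

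\textbf{Step~3, point~2.} The claim $\theta(\bar x+d)=\theta'(\bar x;d)$ for small $d\in T_A(\bar x)$ is unjustified and fails in general. Write
\[
\theta(\bar x+d)=\min_{v\in K}\bigl(f_v(\bar x)+\langle F^*v,d\rangle\bigr).
\]
For polyhedral $A$, the map $v\mapsto f_v(\bar x)\ge 0$ is piecewise linear and vanishes exactly on $W(\bar x)$. But $K$ is curved when $C$ is not polyhedral. Moving $v$ along $\operatorname{bd}K$ away from $W(\bar x)$ can then raise $f_v(\bar x)$ only quadratically while lowering $\langle F^*v,d\rangle$ linearly. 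So the minimizer leaves $W(\bar x)$ and $\theta(\bar x+d)<\theta'(\bar x;d)$. This should occur, for instance, with $C$ the Lorentz cone in $\mathbb{R}^3$, $F$ the identity, $A$ a suitably placed triangle having a weakly efficient edge parallel to a boundary ray of $C$, and $\bar x$ in the relative interior of that edge.

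\textbf{The lower bound on $\theta'$.} The inequality $\theta'(\bar x;d)\ge c\,\dist(d,D_0)$ does not follow from compactness of the unit sphere, because both sides vanish on $D_0$. It is itself an error-bound statement and would need its own proof. The fallback via condition (xi) is only a hope, not an argument.

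\textbf{Step~2.} The vertex formula is wrong: $a\mapsto\psi(F(x-a))$ is concave, so its maximum over $A$ need not occur at a vertex. Take $C=\mathbb{R}^2_+$, $F$ the identity, $A=\co\{(0,1),(1,0),(1,1)\}$ and $x=(1,1)\in A$. Every vertex term equals $0$, yet $a=(\tfrac12,\tfrac12)$ gives $\theta(x)\ge\tfrac12$.

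\textbf{What you missed.} No error bound is needed; your point~1 already contains the whole proof.
\begin{enumerate}
\item Since $A$ is polyhedral, $T_A(\bar x)=\bigcup_{t>0}t(A-\bar x)$. So if $d\in T_A(\bar x)$ and $\theta'(\bar x;d)=0$, then $\bar x+td\in A$ for all small $t>0$.
\item Concavity, together with $\theta\ge 0$ on $A$, gives $0\le\theta(\bar x+td)\le\theta(\bar x)+t\,\theta'(\bar x;d)=0$.
\item Hence $\bar x+td\in E_w$ by Lemma~\ref{YL:thetaprop}, and therefore $d\in T_{E_w}(\bar x)$.
\item Theorem~\ref{ZAQW}(i) then closes the chain of inclusions.
\end{enumerate}
This argument uses neither compactness nor any error bound. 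If you want to keep your error-bound route instead, you must cite the known polyhedral error bound, as the paper does.
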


\begin{proof}
	By Corollary 5  of  \cite{LiuNgYang2009}, $d_{\widehat{A}}$ has an error bound on $A$, which is equivalent to $\theta$ having an error bound on $A$. Clearly, $\theta$ has a local error bound at $\bar{x}$. Hence, by Theorem \ref{ZAQW}, we obtain
	\[
	T_{E_w}(\bar{x})=T_A(\bar{x})\cap T_{\widehat{A}}(\bar{x})=\{d\in T_A(\bar{x}):\theta'(\bar{x};d)=0\}.
	\]
\end{proof}

The following lemma is easily established.
\begin{lemma} \label{CYU}
	Let $X$ be a Hilbert space, $S\subset X$ be a nonempty closed convex set, and $x\in S$. Then for any direction $d\in X$, the directional derivative
	\[
	d_S'(x;d) := \lim_{t\downarrow0}\frac{d_S(x+td)}{t}
	\]
	exists, and
	$	d_S'(x;d) = \operatorname{dist}(d, T_S(x))$.
\end{lemma}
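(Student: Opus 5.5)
The plan is to reduce everything to an elementary identity for rescaled sets, and then use monotonicity of these sets in \(t\). For \(t>0\), scaling and translating the infimum defining \(d_S\) gives
\[
\frac{d_S(x+td)}{t}=\inf_{s\in S}\Big\|d-\frac{s-x}{t}\Big\|=\operatorname{dist}\Big(d,\;S_t\Big),\qquad S_t:=\tfrac1t(S-x).
\]
So existence of the limit and its value both become statements about \(\operatorname{dist}(d,S_t)\) as \(t\downarrow 0\).

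First, the family \(S_t\) is nested. Since \(x\in S\), we have \(0\in S-x\), and \(S-x\) is convex. Hence for \(0<t_1<t_2\) and \(u\in S-x\),
\[
\tfrac{1}{t_2}u=\tfrac{1}{t_1}\Big(\tfrac{t_1}{t_2}u+\big(1-\tfrac{t_1}{t_2}\big)\cdot 0\Big)\in S_{t_1},
\]
so \(S_{t_2}\subseteq S_{t_1}\). Consequently \(t\mapsto \operatorname{dist}(d,S_t)\) is nondecreasing in \(t\), i.e. nonincreasing as \(t\downarrow 0\). It is also bounded below by \(0\), so the limit \(d_S'(x;d)\) exists.

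It remains to identify the limit. The union \(\bigcup_{t>0}S_t\) equals \(\bigcup_{t>0}t(S-x)\), and by Remark \ref{RtDFxl} its closure is \(T_S(x)\).

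\emph{Upper bound on the distance to the tangent cone.} Each \(S_t\) is contained in \(T_S(x)\), so
\[
\operatorname{dist}(d,S_t)\ge \operatorname{dist}(d,T_S(x))\quad\text{for every } t>0,
\]
and therefore \(d_S'(x;d)\ge \operatorname{dist}(d,T_S(x))\).

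\emph{Matching bound.} Fix \(\varepsilon>0\). Since distance to a set equals distance to its closure, there is \(w\) in the (non-closed) cone with \(\|d-w\|<\operatorname{dist}(d,T_S(x))+\varepsilon\). Write \(w=\lambda(y-x)\) with \(y\in S\) and \(\lambda>0\); then \(w\in S_{1/\lambda}\). By nestedness, \(w\in S_t\) for all \(t\le 1/\lambda\), which gives
\[
\operatorname{dist}(d,S_t)<\operatorname{dist}(d,T_S(x))+\varepsilon \quad\text{for all such } t.
\]
Letting \(t\downarrow0\) and then \(\varepsilon\downarrow0\) yields the equality \(d_S'(x;d)=\operatorname{dist}(d,T_S(x))\).

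The only delicate step is the nestedness of the \(S_t\), which is where convexity and \(x\in S\) enter. The Hilbert structure is not actually needed; the argument works in any normed space.
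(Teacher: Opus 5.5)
Your proof is correct and complete. The paper gives no proof of this lemma; it only says the lemma ``is easily established'', so there is no argument in the paper to compare with. Yours is the standard argument. It combines three ingredients: the identity \(d_S(x+td)/t=\operatorname{dist}\bigl(d,\tfrac1t(S-x)\bigr)\); monotonicity in \(t\) from the nestedness of the rescaled sets, which is exactly where convexity and \(x\in S\) are used; and identification of the limit via \(T_S(x)=\cl\bigcup_{t>0}t(S-x)\) from Remark~\ref{RtDFxl}.

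Two small remarks. The step you head ``Upper bound on the distance to the tangent cone'' actually proves the lower bound \(d_S'(x;d)\ge\operatorname{dist}(d,T_S(x))\); only the label is off. Your final observation is right and worth keeping: nothing Hilbertian is used. Since this lemma is the only place where Theorem~\ref{TaqXEQ} uses the Hilbert structure of \(X\), your version shows that theorem holds for an arbitrary Banach space \(X\).
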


\begin{theorem} \label{TaqXEQ} 
	Let  $X$ be a Hilbert space  and $\bar x\in E_w$.  Assume that
	\begin{itemize}
		\item[(i)]  $\theta$ has a  local error bound at $\bar x$, i.e.,  there exist constants $\tau>0,\delta>0$ such that
	\begin{equation}\label{ECIA++}
		d(x,E_w)\le \tau\,\theta(x)\qquad \forall x\in A\cap B(\bar x,\delta).
		\end{equation}
		\item[(ii)]  $E_w$ is    locally convex  at $\bar x$, i.e., there exists $\eta >0$ such that $E_w\cap B(\bar x,\eta)$ is convex. 
	\end{itemize}
		Then for any   direction $d\in T_A(\bar x)$,   we have
	\[
	\theta'(\bar x;d)\ge \frac{1}{\tau}\,\operatorname{dist}\bigl(d,\;T_{E_w}(\bar x)\bigr).
	\]
\end{theorem}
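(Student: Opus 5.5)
We plan to combine the local error bound with the directional derivative of the distance function from Lemma \ref{CYU}, passing to the limit along a feasible curve in $A$. Fix $d\in T_A(\bar x)$. By definition of the tangent cone there are $t_k\downarrow 0$ and $d_k\to d$ with $x_k:=\bar x+t_kd_k\in A$. For $k$ large, $x_k\in A\cap B(\bar x,\delta)$, so (\ref{ECIA++}) gives
\[
d(x_k,E_w)\le \tau\,\theta(x_k).
\]
Using $\theta(\bar x)=0$, the Lipschitz continuity of $\theta$ (Theorem \ref{YL:thetaprop2}) and the definition of $\theta'(\bar x;d)$, exactly as in the proof of Theorem \ref{ZAQW}, we obtain $\theta(x_k)/t_k\to\theta'(\bar x;d)$. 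Likewise, since $d_{E_w}$ is $1$-Lipschitz, $d(x_k,E_w)/t_k-d(\bar x+t_kd,E_w)/t_k\to 0$. Dividing the displayed inequality by $t_k$ and letting $k\to\infty$ will yield
\[
\liminf_{t\downarrow 0}\frac{d(\bar x+td,E_w)}{t}\le \tau\,\theta'(\bar x;d),
\]
where the left-hand side is taken along the sequence $t_k$ (it will be a genuine limit once the next step is done).

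The remaining task is to identify $\lim_{t\downarrow0} d(\bar x+td,E_w)/t$ with $\operatorname{dist}(d,T_{E_w}(\bar x))$. Lemma \ref{CYU} does this for a closed convex set, but $E_w$ is only locally convex at $\bar x$. We therefore set $S:=E_w\cap \bar B(\bar x,\eta')$ with $\eta'<\eta$; it is closed and convex, being the intersection of the closed set $E_w$ with a closed ball inside the region where $E_w$ is convex. (If one prefers, use $\cl(E_w\cap B(\bar x,\eta))$, which is closed and convex and contained in $E_w$ since $E_w$ is closed.) Two localization facts are then needed. First, $T_S(\bar x)=T_{E_w}(\bar x)$, since tangent cones depend only on the set near $\bar x$ and the ball contains a neighborhood of $\bar x$. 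Second, for $t$ small, $d(\bar x+td,E_w)=d(\bar x+td,S)$: points of $E_w$ outside $\bar B(\bar x,\eta')$ are at distance at least $\eta'-t\|d\|$ from $\bar x+td$, whereas $d(\bar x+td,E_w)\le t\|d\|$ because $\bar x\in E_w$. So for $t<\eta'/(2\|d\|)$ the nearly optimal points lie inside the ball. Lemma \ref{CYU} applied to $S$ then gives
\[
\lim_{t\downarrow0}\frac{d(\bar x+td,E_w)}{t}=\operatorname{dist}\bigl(d,T_{E_w}(\bar x)\bigr).
\]
Combining this with the previous step and dividing by $\tau$ yields the claim.

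I expect the main obstacle to be this localization step, namely ensuring that the convexity hypothesis (ii) legitimately allows the use of Lemma \ref{CYU}. The rest is a routine limit argument that mirrors the proof of Theorem \ref{ZAQW}(ii). The case $d=0$ is trivial, and the Hilbert space assumption enters only through Lemma \ref{CYU}.
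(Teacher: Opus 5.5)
Your proposal is correct and follows essentially the same route as the paper: apply the local error bound along $x_k=\bar x+t_kd_k\in A$, transfer to the ray $\bar x+t_kd$ via the Lipschitz continuity of $\theta$ and $d_{E_w}$, localize to the closed convex set $S=E_w\cap\overline{B}(\bar x,\rho)$ (where $d(\cdot,E_w)=d(\cdot,S)$ near $\bar x$ and $T_S(\bar x)=T_{E_w}(\bar x)$), and finish with Lemma~\ref{CYU}. The only cosmetic difference is that the paper bounds $\theta(\bar x+t_kd)\le t_k\,\theta'(\bar x;d)$ using concavity, whereas you pass to the limit $\theta(x_k)/t_k\to\theta'(\bar x;d)$; both work.
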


\begin{proof}
	Since $\theta$ is concave,  for any $x \in X$,  it is easy to get that 
	\[
	\theta(x+td) \le \theta(x) + t\,\theta'(x;d),\quad \forall t>0.
	\]
	In particular, at $\bar x\in E_w$, $\theta(\bar x)=0$, so
	\begin{equation}\label{ECIA1}
		\theta(\bar x+td) \le t\,\theta'(\bar x;d),\quad \forall t>0.  
	\end{equation}

	Set
$	\rho := \min\{\delta,\, \eta \}/2 >0,$  	and define
$	S := E_w \cap \overline{B}(\bar x,\rho), $
	where $\overline{B}(\bar x,\rho)$ is the closed ball.  By the local convexity hypothesis and the closedness of $E_w$,   we obtain that $S$ is  closed and convex. Clearly, $S\subset E_w$ and $\bar x\in S$.

	Set $\rho' := \rho/2$. For any  $x\in B(\bar x,\rho')$, on one hand, $S\subset E_w$ implies $d(x,E_w)\le d(x,S)$.
	
		On the other hand, note $d(x,E_w)\le \|x-\bar x\| < \rho' = \rho/2$. Pick $\varepsilon$ small enough so that $0<\varepsilon < \rho/2 - d(x,E_w)$. By definition of distance, for this $\varepsilon>0$ there exists $y_\varepsilon\in E_w$ with
	$$	\|x-y_\varepsilon\| < d(x,E_w) + \varepsilon.$$
	Then
	\[
	\|y_\varepsilon-\bar x\| \le \|y_\varepsilon-x\| + \|x-\bar x\| < d(x,E_w)+\varepsilon+\rho' < \frac{\rho}{2} + \frac{\rho}{2} = \rho.
	\]
	Hence $y_\varepsilon\in B(\bar x,\rho)\cap E_w = S$. Therefore
	\[
	d(x,S) \le \|x-y_\varepsilon\| < d(x,E_w) + \varepsilon.
	\]
	Letting $\varepsilon\downarrow0$ gives $d(x,S)\le d(x,E_w)$. Together with $d(x,E_w)\le d(x,S)$,  we get
\begin{equation}\label{ECIA2}
		d(x,E_w)=d(x,S),\quad \forall x\in B(\bar x,\rho').  
	\end{equation}

	Take any $d\in T_A(\bar x)$. Then  there exist sequences $\{t_k\}\subset (0,\infty)$ with $t_k\downarrow0$, and $\{d_k\}\subset X$ with $d_k\to d$, such that
	\[
	x_k := \bar x + t_k d_k \in A,\qquad \forall k\in\mathbb N.
	\]
	Since $d_k\to d$, $\{d_k\}$ is bounded, and $t_k\to0$, there exists $k_0 \in\mathbb N$ such that for all $k\ge k_0$, $\|x_k-\bar x\| = t_k\|d_k\| < \rho'$. Thus for large $k$, $x_k\in A\cap B(\bar x,\rho')$.
	At such $k$,  it follows from  (\ref{ECIA++})   and   (\ref{ECIA2})    that
	\begin{equation}\label{ECIA3}
		d(x_k,E_w) = d(x_k,S) \le \tau\,\theta(x_k).  
	\end{equation}
		Using the Lipschitz property of $\theta$ (Theorem \ref{YL:thetaprop2}),
 	$$\theta(x_k) = \theta(\bar x + t_k d_k) \le \theta(\bar x + t_k d) + L\|t_k d_k - t_k d\| = \theta(\bar x + t_k d) + L\,t_k\|d_k-d\|. $$
	This together with  (\ref{ECIA1})   implies that   
 	\begin{equation}\label{ECIA4}
		\theta(x_k) \le t_k\theta'(\bar x;d) + L\,t_k\|d_k-d\|.  
	\end{equation}

	By the triangle inequality for distance,
	\begin{align*}
		d(\bar x+t_k d, E_w) &\le d(x_k, E_w) + \|(\bar x+t_k d)-x_k\| \notag\\
		&= d(x_k, E_w) + t_k\|d_k-d\|.  
	\end{align*}
 Combining this with   (\ref{ECIA3})   and   (\ref{ECIA4}), we get
	\begin{align*}
		d(\bar x+t_k d, E_w)
		&\le \tau\theta(x_k) + t_k\|d_k-d\| \notag\\
		&\le \tau\bigl( t_k\theta'(\bar x;d) + L t_k\|d_k-d\|\bigr) + t_k\|d_k-d\| \notag\\
		&= t_k \tau \theta'(\bar x;d) + t_k(\tau L+1)\|d_k-d\|.
	\end{align*}
	Dividing both sides by $t_k>0$, we obtain the key inequality:
	\begin{equation}\label{ECIA5}
		\frac{d(\bar x+t_k d, E_w)}{t_k} \le \tau\,\theta'(\bar x;d) + (\tau L+1)\|d_k-d\|.  
	\end{equation}

	For sufficiently large $k$, $\bar x+t_k d\in B(\bar x,\rho')$, so by  (\ref{ECIA2}), $d(\bar x+t_k d, E_w) = d(\bar x+t_k d, S)$. Applying Lemma~\ref{CYU} to the closed convex set $S$, we get
	\begin{equation}\label{ECIA6}
	\lim_{k\to\infty}\frac{d(\bar x+t_k d, E_w)}{t_k} = \lim_{k\to\infty}\frac{d(\bar x+t_k d, S)}{t_k} = \operatorname{dist}\bigl(d, T_S(\bar x)\bigr).
	\end{equation}
	Moreover, since $S$ and $E_w$ coincide in a neighborhood of $\bar x$, their tangent cones are the same: $T_S(\bar x) = T_{E_w}(\bar x)$.  This together with  (\ref{ECIA6})  yields that 
	\begin{equation}\label{ECIA7}
		\lim_{k\to\infty}\frac{d(\bar x+t_k d, E_w)}{t_k} =   \operatorname{dist}\bigl(d, T_{E_w}(\bar x)\bigr).
	\end{equation}
	    We conclude from  (\ref{ECIA5})  and  (\ref{ECIA7})  that
$	\operatorname{dist}\bigl(d, T_{E_w}(\bar x)\bigr) \le \tau\,\theta'(\bar x;d),$
and so
	\[
	\theta'(\bar x;d) \ge \frac{1}{\tau}\,\operatorname{dist}\bigl(d, T_{E_w}(\bar x)\bigr).
	\]
	This completes the proof.
\end{proof}

\section{Characterizing error bounds via directional derivatives}
 
 The preceding section has laid the foundation for the directional differentiability of the merit function \(\theta\). We obtained explicit formulas for \(\theta'(\bar{x};d)\) at weakly efficient points, characterized the cone of zero directional derivatives, and established a precise link between the tangent cone of the solution set \(T_{E_w}(\bar{x})\) and the directional behaviour of \(\theta\) when a local error bound is present. These results reveal that directional derivatives carry essential information about both the local geometry of the solution set and the rate at which \(\theta\) increases away from it. In this section we   derive characterizations of error bounds in terms of directional derivative conditions. Our main goal is to show that the global error bound property is   captured by a uniform negativity condition on the infimum of directional derivatives over the unit sphere of the tangent cone, and that the optimal local error bound constant is precisely the reciprocal of the corresponding minimal directional derivative. To this end, we first introduce two key quantities that will play a central role throughout the analysis.

 For any \(x \in A\), the unit-ball minimum directional derivative \(\mu(x)\) and the unit-sphere minimum directional derivative \(\varphi(x)\) are defined, respectively, by
\[
\mu(x) := \inf\bigl\{ \theta'(x; d) : d \in T_A(x),\; \|d\| \le 1 \bigr\},
\]
and
\[
\varphi(x) := \inf\bigl\{ \theta'(x; d) : d \in T_A(x),\; \|d\| = 1 \bigr\}.
\]

We now discuss the relationship between the unit-ball minimum directional derivative \(\mu(x)\) and the unit-sphere minimum directional derivative \(\varphi(x)\).
 
\begin{theorem}\label{thm:mu-c}
	Suppose \(T_A(x)\neq\{0\}\). Then
	\[
	\mu(x)=\min\{\varphi(x),\,0\}.
	\]
 	In particular, \(\mu(x)\le 0\) always holds, and \(\mu(x)<0\) if and only if there exists a feasible descent direction (i.e., there exists \(d_0 \in T_A(x)\)  such that  \(\|d_0 \|=1\) and \(\theta'(x;d_0 )<0\)).
\end{theorem}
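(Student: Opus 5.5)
The plan is to exploit positive homogeneity of \(d\mapsto\theta'(x;d)\) from Theorem~\ref{JBXZDS}(iii), together with the fact that \(T_A(x)\) is a cone containing \(0\) (Remark~\ref{RtDFxl}). Every \(d\in T_A(x)\) with \(\|d\|\le 1\) is either \(d=0\) or \(d=\lambda u\) with \(\lambda=\|d\|\in(0,1]\) and \(u=d/\|d\|\in T_A(x)\), \(\|u\|=1\). Hence the feasible set in the definition of \(\mu(x)\) is \(\{0\}\cup\{\lambda u:\lambda\in(0,1],\ u\in T_A(x)\cap S_X\}\). Since \(T_A(x)\neq\{0\}\) and \(T_A(x)\) is a cone, the set \(T_A(x)\cap S_X\) is nonempty, so \(\varphi(x)\) is an infimum over a nonempty set. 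It is also finite, because \(|\theta'(x;u)|\le\|F\|\) by Theorem~\ref{JBXZDS}(ii) and \(\theta'(x;0)=0\).

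I would then compute \(\mu(x)\) as the infimum over \(\lambda\in[0,1]\) and \(u\) of \(\lambda\,\theta'(x;u)\), using \(\theta'(x;0)=0\) and \(\theta'(x;\lambda u)=\lambda\theta'(x;u)\). For the lower bound, each term satisfies \(\lambda\theta'(x;u)\ge\lambda\varphi(x)\ge\min\{\varphi(x),0\}\), since a number between \(0\) and \(\varphi(x)\) dominates the smaller of the two. The value at \(d=0\) is \(0\ge\min\{\varphi(x),0\}\). So \(\mu(x)\ge\min\{\varphi(x),0\}\).

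For the upper bound, taking \(d=0\) gives \(\mu(x)\le 0\), and taking \(\lambda=1\) with \(u\) ranging over unit tangent directions gives \(\mu(x)\le\varphi(x)\). Hence \(\mu(x)=\min\{\varphi(x),0\}\).

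The "in particular" part follows directly. First, \(\mu(x)\le0\) holds trivially from \(d=0\). Next, \(\mu(x)<0\) if and only if \(\varphi(x)<0\), which by the definition of the infimum holds if and only if some unit \(d_0\in T_A(x)\) has \(\theta'(x;d_0)<0\). Note that this equivalence does not require the infimum defining \(\varphi(x)\) to be attained: \(\varphi(x)<0\) already gives some element with negative value. There is no real obstacle; the only points needing care are the nonemptiness of \(T_A(x)\cap S_X\), which is guaranteed by the hypothesis \(T_A(x)\neq\{0\}\), and the case split \(\varphi(x)\ge0\) versus \(\varphi(x)<0\) in the lower bound.
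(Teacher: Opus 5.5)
Your proposal is correct and follows the paper's approach: both rest on positive homogeneity of \(d\mapsto\theta'(x;d)\) and on rescaling vectors of \(T_A(x)\cap B_X\) to unit length. Your single inequality \(\lambda\varphi(x)\ge\min\{\varphi(x),0\}\) for \(\lambda\in(0,1]\) merges the paper's two cases (\(\varphi(x)\ge0\) and \(\varphi(x)<0\)) into one step, and you also make explicit the nonemptiness and finiteness of \(\varphi(x)\), which the paper leaves implicit.
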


\begin{proof}[Proof.]
	Since \(0\in T_A(x)\) and \(\|0\|\le1\), we have \(\mu(x)\le\theta'(x;0)=0\), hence \(\mu(x)\le0\) always holds.
	
	Now we distinguish two cases according to the sign of $\varphi(x)$.
	
	\noindent\textbf{Case 1: \(\varphi(x)\ge 0\).} Then for all \(d\in T_A(x)\) with \(\|d\|=1\), we have \(\theta'(x;d)\ge 0\). Take any \(v\in T_A(x)\) with \(\|v\|\le1\). If \(v=0\), then \(\theta'(x;v)=0\). If \(v\neq0\), set \(d=v/\|v\|\); then \(\|d\|=1\). By positive homogeneity,
	\[
	\theta'(x;v)=\|v\|\,\theta'(x;d)\ge 0,
	\]
	so \(\theta'(x;v)\ge0\) for all \(\|v\|\le1\), which yields \(\mu(x)\ge0\). Together with \(\mu(x)\le0\) we obtain \(\mu(x)=0\).
	
	\noindent\textbf{Case 2: \(\varphi(x)<0\).} For any \(v\in T_A(x)\) with \(\|v\|\le1\), if \(\|v\|=1\) then \(\theta'(x;v)\ge \varphi(x)\).   If \(\|v\|<1\), take \(d=v/\|v\|\) (when \(v\neq0\)). Then it follows from   \(\varphi(x)<0\) and \(\|v\|   < 1\)    that  
	\[
	\theta'(x;v)=\|v\|\,\theta'(x;d)\ge \|v\|\,\varphi(x) > \varphi(x).
	\]
  Hence \(\theta'(x;v)\ge \varphi(x)\) for all \(v\in T_A(x)\) with \(\|v\|\le1\). 
  This means that \(\mu(x)  \geq   \varphi(x)\).   It is clear that \(\mu(x)  \leq   \varphi(x)\). Therefore, we get  \(\mu(x)=\varphi(x)\).  It is easy to see that   \(\mu(x) =  \varphi(x)<0\) if and only if 
   there exists \(d_0\in T_A(x)\) with \(\|d_0\|=1\) such that \(\theta'(x;d_0)<0\). 
\end{proof}

\begin{remark} \label{Rxdt}  Let  $A\subset X$ be a nonempty compact convex set.   It follows from Theorem  \ref{ZSQE} that
	\begin{equation}\label{Eyup1}
x\in E_w\iff \theta'(x;d)\ge0,\ \forall d\in T_A(x). 
\end{equation}
If $x\in E_w$,   it follows from  (\ref{Eyup1})  that    \(\varphi(x)\ge 0\).  This together with  Theorem  \ref{thm:mu-c}  implies that  \(\mu(x)=0\).  Conversely, if \(x \notin E_w\),  by  (\ref{Eyup1}),   there exists  \(d_0\in T_A(x)\)   such that \(\theta'(x;d_0)<0\).  Then it is easy to see that    \(\varphi(x) < 0\),  and so  by Theorem  \ref{thm:mu-c}, we get  \(\mu(x) = \varphi(x) < 0\). Therefore,
\[
x \in E_w \iff \mu(x) = 0 \iff \varphi(x) \ge 0   \quad   {\rm{and }}    \quad   x \notin E_w \iff \mu(x) = \varphi(x) < 0.
\]
 \end{remark}

Next,   we  present some characterizations of the unit-ball minimum directional derivative \(\mu(x)\).

\begin{theorem} 
	Let $X$ be a finite-dimensional space and  $A\subset X$ be a nonempty compact convex set.
	Then the following hold:
	\begin{enumerate}
		\item[(i)]  For any \(x \in A\), 	the unit-ball minimum directional derivative \(\mu(x)\) can be represented as:
		\[
		\mu(x)=-\max_{v\in K(x)}\operatorname{dist}\bigl(F^*v,\,-N_A(x)\bigr)=-\max_{v\in K(x)}\bigl\|P_{T_A(x)}(-F^*v)\bigr\|,
		\]
		  where \(P_{T_A(x)}\) denotes the orthogonal projection onto the tangent cone \(T_A(x)\),  and  $K(x)=\{v\in K:\theta(x)=\langle F^*v,x\rangle+\sigma_A(-F^*v)\}$ is a nonempty weak$^*$ compact set.  
	
		\item[(ii)]
		Let $x\in A\setminus E_w$ and choose $v_0\in K(x)$ such that $\mu(x)= - \bigl\|P_{T_A(x)}(-F^* v_0)\bigr\|$. Then $P_{T_A(x)}(-F^*v_0)\neq 0$. Set
		\[
		d_0:=\frac{P_{T_A(x)}(-F^*v_0)}{\|P_{T_A(x)}(-F^*v_0)\|}.
		\]
		Then $d_0\in T_A(x)$, $\|d_0\|=1$, and
	$		\theta'(x;d_0)=\mu(x).	$
	\end{enumerate}
\end{theorem}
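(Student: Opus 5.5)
The plan is to work in $X\cong\mathbb{R}^n$ with a Euclidean inner product, so that $X^*=X$ and the projection onto the closed convex cone $T_A(x)$ is single-valued. There are two ingredients. The first is Theorem \ref{AXDSS}, which applies because $A$ is compact convex:
\[
\theta'(x;d)=\min_{v\in K(x)}\langle F^*v,d\rangle ,
\]
with $K(x)$ nonempty and weak$^*$-compact, hence compact in finite dimensions. The second is Moreau's decomposition for the closed convex cone $T:=T_A(x)$ and its polar $N:=N_A(x)=T^\ominus$ (Remark \ref{RtDFxl}). Every $u\in X$ splits as $u=P_T(u)+P_N(u)$ with $\langle P_T(u),P_N(u)\rangle=0$. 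Consequently $\langle u,P_T(u)\rangle=\|P_T(u)\|^2$ and $\|P_T(u)\|=\operatorname{dist}(u,N)$.

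For (i), I would first interchange the two infima:
\[
\mu(x)=\inf_{d\in T,\ \|d\|\le 1}\ \min_{v\in K(x)}\langle F^*v,d\rangle=\inf_{v\in K(x)}\ \inf_{d\in T,\ \|d\|\le1}\langle F^*v,d\rangle .
\]
This is legitimate because both sides are just the infimum over the product set. For fixed $v$, put $u=-F^*v$. The inner infimum equals $-\sup_{d\in T\cap B_X}\langle u,d\rangle$, and the key lemma is that this supremum equals $\|P_T(u)\|$.

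For the ``$\le$'' direction, take $d\in T\cap B_X$. Since $\langle P_N(u),d\rangle\le0$, we get $\langle u,d\rangle\le\langle P_T(u),d\rangle\le\|P_T(u)\|$. For the ``$\ge$'' direction, test with $d=P_T(u)/\|P_T(u)\|$ when $P_T(u)\ne0$; this gives $\langle u,d\rangle=\|P_T(u)\|$ by the identity above. The case $P_T(u)=0$ is trivial.

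Moreau then gives $\|P_T(-F^*v)\|=\operatorname{dist}(-F^*v,N)=\operatorname{dist}(F^*v,-N)$. Hence
\[
\mu(x)=-\sup_{v\in K(x)}\|P_{T}(-F^*v)\| .
\]
The supremum is a maximum, since $v\mapsto\|P_T(-F^*v)\|$ is continuous (the projection is nonexpansive and $F^*$ is continuous) and $K(x)$ is compact.

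For (ii), let $x\in A\setminus E_w$. By Remark \ref{Rxdt}, $\mu(x)<0$, so $\|P_T(-F^*v_0)\|=-\mu(x)>0$. Then $d_0$ is well defined, lies in $T$ because $T$ is a cone, and has norm $1$. Since $d_0$ is admissible in the definition of $\mu$, we get $\theta'(x;d_0)\ge\mu(x)$. Conversely,
\[
\theta'(x;d_0)\le\langle F^*v_0,d_0\rangle=-\frac{\langle -F^*v_0,P_T(-F^*v_0)\rangle}{\|P_T(-F^*v_0)\|}=-\|P_T(-F^*v_0)\|=\mu(x),
\]
where the middle equality is the identity $\langle u,P_T(u)\rangle=\|P_T(u)\|^2$. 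So equality holds.

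The main obstacle is the identification $\sup_{T\cap B}\langle u,\cdot\rangle=\|P_T u\|$ together with the Moreau facts. This is where the Euclidean (Hilbert) structure is really needed: the statement's use of ``orthogonal projection'' implicitly fixes an inner-product norm on the finite-dimensional space. I will state that identification explicitly at the start of the proof.
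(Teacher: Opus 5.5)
Your proposal is correct and takes essentially the same route as the paper: the formula $\theta'(x;d)=\min_{v\in K(x)}\langle F^*v,d\rangle$ from Theorem~\ref{AXDSS}, exchange of the two infima, and Moreau's decomposition to obtain $\min_{d\in T_A(x),\,\|d\|\le 1}\langle F^*v,d\rangle=-\|P_{T_A(x)}(-F^*v)\|$, with the same argument for (ii); your product-set swap and your use of the admissibility of $d_0$ are simply tidier versions of the paper's two-inequality argument via $p(v)$. One small correction: only $X$ is assumed finite-dimensional, so $K(x)\subseteq Y^*$ is merely weak$^*$-compact rather than norm-compact, but the maximum over $K(x)$ is still attained, because $F^*$ is weak$^*$-to-weak$^*$ continuous and the weak$^*$ and norm topologies coincide on the finite-dimensional space $X^*$, so $v\mapsto\|P_{T_A(x)}(-F^*v)\|$ is weak$^*$-continuous on $K(x)$.
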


\begin{proof}
	By Theorem~\ref{JBXZDS}, the map $d\mapsto\theta'(x;d)$ is Lipschitz continuous on $X$.
		Define
	\[
	D:=\{d\in T_A(x):\|d\|\le 1\}.
	\]
	Since $T_A(x)$ is closed and convex, and $X$ is finite-dimensional, $D$ is a compact convex set. Hence, there exists $d_0\in D$ such that
	\begin{equation}\label{VRT1}
		\theta'(x;d_0)=\min_{d\in D} \theta'(x;d)=\mu(x).
	\end{equation}
		By Theorem~\ref{AXDSS}, we have
	\begin{equation}\label{VRT2}
		\theta'(x;d)=\min_{v\in K(x)}\langle F^*v,d\rangle , \quad \forall d\in X.  
	\end{equation}
	
	Because $K$ is a weak$^*$ compact convex set in $Y^*$, its weak$^*$ closed subset $K(x)$ is also weak$^*$ compact.
	For each $v\in K(x)$, define
	\[
	p(v) := \inf_{d\in D} \langle F^*v, d\rangle .
	\]
	Since $D$ is compact and the map $d\mapsto \langle F^*v, d\rangle$ is continuous, and $v\mapsto F^*v$ is continuous, we have
	\begin{equation}\label{VRT++}
		p(v) := \min_{d\in D} \langle F^*v, d\rangle ,
	\end{equation}
	and $p$ is defined and continuous on the weak$^*$ compact set $K(x)$. Consequently,
	\begin{equation}\label{VRT3}
		\inf_{v\in K(x)} p(v) = \min_{v\in K(x)} p(v).
	\end{equation}
	
	For any fixed $d\in D$, by~\eqref{VRT2},
	\[
	\theta'(x;d) = \min_{v\in K(x)} \langle F^*v, d\rangle \ge \min_{v\in K(x)} \Bigl( \inf_{d'\in D} \langle F^*v, d'\rangle \Bigr) = \min_{v\in K(x)} p(v).
	\]
	Since this inequality holds for all $d\in D$, taking the infimum over $d$ yields
	\begin{equation}\label{VRT4}
		\mu(x) = \inf_{d\in D} \theta'(x;d) \ge \min_{v\in K(x)} p(v).  
	\end{equation}
	
	Take any $v_0 \in K(x)$. For every $d\in D$, by~\eqref{VRT2}, $\theta'(x;d) = \min_{v\in K(x)} \langle F^*v, d\rangle \le \langle F^*v_0, d\rangle$. Thus
	\[
	\mu(x) = \inf_{d\in D} \theta'(x;d) \le \inf_{d\in D} \langle F^*v_0, d\rangle = p(v_0).
	\]
	Since $v_0\in K(x)$ is arbitrary, taking the infimum over $v_0$ gives
	\begin{equation}\label{VRT5}
		\mu(x) \le \inf_{v\in K(x)} p(v).  
	\end{equation}
	Combining~\eqref{VRT++}, \eqref{VRT3}, \eqref{VRT4} and~\eqref{VRT5}, we obtain
	\begin{equation}\label{VRT6}
		\mu(x) = \min_{v\in K(x)} p(v) = \min_{v\in K(x)} \min_{d\in D} \langle F^*v, d\rangle =\min_{v\in K(x)}\Bigl(\min_{\substack{d\in T_A(x)\\\|d\|\le 1}}\langle F^*v,d\rangle\Bigr).
	\end{equation}
	
	Fix $w = F^*v$. Set $T = T_A(x)$ and define
	\begin{equation}\label{VRT7}
		\alpha(w) := \min_{\|d\|\le 1,\, d\in T} \langle w, d\rangle = -\max_{\|d\|\le 1,\, d\in T} \langle -w, d\rangle.
	\end{equation}
	
	In a Hilbert space, for the closed convex cone $T$, its polar cone is
	\[
	T^\circ = \{ u : \langle u, d\rangle \le 0,\ \forall d\in T \},
	\]
	and by the relation between tangent and normal cones of a convex set, $T^\circ = N_A(x)$. Using the Moreau orthogonal decomposition, any vector $u$ can be uniquely expressed as
	\[
	u = P_T(u) + P_{T^\circ}(u), \qquad \langle P_T(u), P_{T^\circ}(u) \rangle = 0.
	\]
	From this one deduces
	\begin{equation}\label{VRT8}
		\max_{\|d\|\le 1,\, d\in T} \langle u, d\rangle = \|P_T(u)\| = \operatorname{dist}(u, T^\circ).
	\end{equation}
	Substituting $u = -w$ and $T^\circ = N_A(x)$ into~\eqref{VRT8} gives
	\[
	\max_{\|d\|\le 1,\, d\in T} \langle -w, d\rangle = \|P_T(-w)\| = \operatorname{dist}(-w, N_A(x)).
	\]
	Together with~\eqref{VRT7}, this implies
	\begin{equation}\label{VRT9}
		\alpha(w) = -\|P_T(-w)\| = -\operatorname{dist}(-w, N_A(x)) = -\operatorname{dist}(w, -N_A(x)).
	\end{equation}
	Insert $w = F^*v$ and $T = T_A(x)$ into~\eqref{VRT9} and use~\eqref{VRT6} and~\eqref{VRT7} to get
	\begin{eqnarray}\label{VRT10}
		\mu(x) &=& \min_{v\in K(x)} \alpha(F^*v)
		= \min_{v\in K(x)} \bigl[-\operatorname{dist}(F^*v, -N_A(x))\bigr] \nonumber\\ 
		&=& -\max_{v\in K(x)} \operatorname{dist}(F^*v, -N_A(x)) = -\max_{v\in K(x)}\bigl\|P_{T_A(x)}(-F^*v)\bigr\|.
	\end{eqnarray}
 This proves part (i).

	\medskip
	\noindent\textbf{(ii).} Let $x\in A\setminus E_w$.  It follows from  Remark   \ref{Rxdt}    that $\mu(x) < 0$. By part (i), choose $v_0\in K(x)$ such that
	\[
	\mu(x) = -\|P_T(-F^*v_0)\|,
	\]
	and so $\|P_T(-F^*v_0)\| = -\mu(x) > 0$. Define the unit vector
	\[
	d_0 = \frac{P_T(-F^*v_0)}{\|P_T(-F^*v_0)\|} \in T_A(x), \quad \|d_0\| = 1.
	\]
	
	On the one hand, for any $v\in K(x)$, using~\eqref{VRT++} and~\eqref{VRT6},
	\[
	\langle F^*v, d_0\rangle \ge \min_{d\in D} \langle F^*v, d\rangle = p(v) \ge \min_{v'\in K(x)} p(v') = \mu(x).
	\]
	Combining with~\eqref{VRT2} yields
	\begin{equation}\label{VRT11}
		\theta'(x;d_0) = \min_{v\in K(x)} \langle F^*v, d_0\rangle \ge \mu(x).
	\end{equation}
	
	On the other hand, for the specific $v_0$, using the orthogonal decomposition $-F^*v_0 = P_T(-F^*v_0) + P_{T^\circ}(-F^*v_0)$ and $\langle P_T(-F^*v_0), P_{T^\circ}(-F^*v_0)\rangle = 0$,
	\[
	\langle F^*v_0, d_0\rangle = \langle -P_T(-F^*v_0) - P_{T^\circ}(-F^*v_0),\, \frac{P_T(-F^*v_0)}{\|P_T(-F^*v_0)\|} \rangle 
	= -\|P_T(-F^*v_0)\| = \mu(x).
	\]
	Hence
	\[
	\theta'(x;d_0) = \min_{v\in K(x)} \langle F^*v, d_0\rangle \le \langle F^*v_0, d_0\rangle = \mu(x).
	\]
	Together with~\eqref{VRT11}, we conclude $\theta'(x;d_0) = \mu(x)$.
\end{proof}

 \begin{theorem}  \label{TXEvQ} 
 	Let \(A \subseteq X\) be a nonempty  compact convex set. Assume that the unit-sphere minimum directional derivative \(\varphi\) satisfies
 	\[
 	\sup_{x \in A \setminus E_w} \varphi(x) < 0.
 	\]
 	Then $\theta$ has an error bound on $A$.
 \end{theorem}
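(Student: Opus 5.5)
Set $c:=-\sup_{x\in A\setminus E_w}\varphi(x)>0$. The plan is to verify condition (viii) of Theorem~\ref{XLWCJK} with any constant $c'\in(0,c)$, for instance $c'=c/2$. Since (viii) is equivalent to (i) there, this gives the global error bound with constant $\tau=1/c'$. Letting $c'\uparrow c$ should even give $\tau=1/c$ for every $c'<c$, hence $d(x,E_w)\le \theta(x)/c$; the constant is not needed, however.

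Fix $x\in A\setminus E_w$. By hypothesis $\varphi(x)\le -c<-c'$, so there is $d\in T_A(x)$ with $\|d\|=1$ and $\theta'(x;d)<-c'$. By Remark~\ref{RtDFxl}, $T_A(x)=\cl\bigl(\bigcup_{t>0}t(A-x)\bigr)$. The direction $d$ need not be feasible, so I approximate it. By the Lipschitz continuity of $d\mapsto\theta'(x;d)$ (Theorem~\ref{JBXZDS}(ii), constant $L=\|F\|$), I choose $u=s(a-x)$ with $a\in A$ and $s>0$ such that $\|u-d\|$ is small enough that $\theta'(x;u)<-c'\|u\|$. This is possible because $\|u\|\to1$ and $\theta'(x;u)\to\theta'(x;d)$ as $u\to d$. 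In particular $u\neq0$.

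Since $A$ is convex, $x+tu\in A$ for all $0<t\le 1/s$. By concavity, the difference quotient $q_t(x,u)$ is nonincreasing in $t$ (proof of Theorem~\ref{JBXZDS}(i)), and it converges to $\theta'(x;u)$ as $t\downarrow0$. Hence for small $t>0$ we have
\[
\frac{\theta(x+tu)-\theta(x)}{t}<-c'\|u\|.
\]
Set $y:=x+tu\in A$. Then $y\neq x$ and $\theta(y)\le\theta(x)-c'\|y-x\|$, which is exactly (\ref{EevL}). Actually concavity already gives $\theta(x+tu)\le\theta(x)+t\theta'(x;u)$ for every admissible $t$, so any $t\in(0,1/s]$ works.

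The main obstacle is the passage from a unit tangent direction to a genuinely feasible segment direction while keeping a uniform slope. I handle it by the approximation argument combined with the Lipschitz continuity of $\theta'(x;\cdot)$. The margin between $c'$ and $c$ absorbs the approximation error. Once (viii) holds, Theorem~\ref{XLWCJK} finishes the proof; Ekeland's principle is already built into that theorem. Compactness of $A$ is used only to guarantee that the preceding theory applies; completeness of $A$ is all that is used in (viii)$\Rightarrow$(i).
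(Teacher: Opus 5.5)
Your proof is correct and follows the paper's route: for each $x\in A\setminus E_w$ you turn a unit tangent direction with negative directional derivative into a feasible point $y\neq x$ satisfying (\ref{EevL}), and then invoke (viii)$\Rightarrow$(i) of Theorem~\ref{XLWCJK}. The difference is only technical. The paper moves along a tangent sequence $x+t_kd_k\in A$ using the Lipschitz continuity of $\theta$ and loses a factor $4$ in the slope. Your approximation by a radial direction $u=s(a-x)$, combined with the concavity inequality $\theta(x+tu)\le\theta(x)+t\,\theta'(x;u)$, keeps every $c'<c$ and so gives the sharper constant $\tau=1/c$ with $c=-\sup_{x\in A\setminus E_w}\varphi(x)$.
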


 \begin{proof}   
 Due to
 	\[
 	\sup_{x \in A \setminus E_w} \varphi(x) < 0,
 	\]
   there exists $c > 0$ such that
 	\[
 	\sup_{x \in A \setminus E_w} \inf_{\substack{d\in T_A(x)\\\|d\|\le 1}}\theta'(x;d)  =  \sup_{x \in A \setminus E_w} \varphi(x) < -c.
 	\]
 	This means that for every $x \in A \setminus E_w$, there exists $d_x \in T_A(x)$ with $\|d_x\| = 1$ such that 
 	\[
 	\lim_{t \downarrow 0} \frac{\theta(x + t d_x) - \theta(x)}{t} = \theta'(x; d_x) < -c, 
 	\]
 	which implies that there exists $\delta_x > 0$ such that for all $t \in (0, \delta_x)$,
 	\begin{equation}\label{EtyhB7}
 		\frac{\theta(x + t d_x) - \theta(x)}{t} < -c.
 	\end{equation}
 Due to $d_x \in T_A(x)$, there exist a sequence $t_k \downarrow 0$ and $d_k \to d_x$ such that
 	\[
 	x_k := x + t_k d_k \in A, \quad \forall k \in \mathbb{N}.
 	\]
 	Using the Lipschitz continuity of $\theta$ (with constant $L = \|F\|$),
 	\[
 	|\theta(x_k) - \theta(x + t_k d_x)| \le L \|x_k - (x + t_k d_x)\| = L t_k \|d_k - d_x\|.
 	\]
 	Hence
 	\[
 	\frac{\theta(x_k) - \theta(x)}{t_k}
 	\le \frac{\theta(x + t_k d_x) - \theta(x)}{t_k} + L \|d_k - d_x\|.
 	\]
 	For sufficiently large $k$, we have $t_k < \delta_x$ and $L \|d_k - d_x\| \le c/2$. Combining with (\ref{EtyhB7}), we obtain
 	\[
 	\frac{\theta(x_k) - \theta(x)}{t_k} \le -c + \frac{c}{2} = -\frac{c}{2},
 	\]
 	and so
 	\begin{equation}\label{EtyhB8}
 		\theta(x_k) \le \theta(x) - \frac{c}{2} t_k.
 	\end{equation}
 	Since $\|d_x\| = 1$ and $d_k \to d_x$, for sufficiently large $k$ we have
 	\[
 	\|x_k - x\| = t_k \|d_k\| \le t_k (\|d_x\| + \|d_k - d_x\|) \le 2 t_k .
 	\]
 	Thus $t_k \ge \frac{1}{2}\|x_k - x\|$. Substituting into (\ref{EtyhB8}) yields
 	\[
 	\theta(x_k) \le \theta(x) - \frac{c}{4} \|x_k - x\|.
 	\]
 	Therefore, for each $x \in A \setminus E_w$, there exists $y = x_k \in A$, $y \neq x$, such that
 	\[
 	\theta(y) \le \theta(x) - \frac{c}{4} \|y - x\|.
 	\]
 	By Theorem \ref{XLWCJK}, this implies that  $\theta$ has an error bound on $A$.
 \end{proof}
 
 \begin{theorem} \label{TRgfd} 
 	Let $X$ be a  finite-dimensional   space and  $\bar{x} \in E_w$. Then  $	\varphi (\bar x) > 0$ if and only if  	$\theta$ has a local sharp minimum at   $\bar x\in E_w$,	i.e., there exist constants $\kappa > 0$ and $\delta > 0$ such that
 	\[
 	\theta(x) \ge \kappa \|x - \bar x\| \qquad \forall x \in A \cap B(\bar x, \delta).
 	\]
  	Moreover, letting $\beta = \sup \left\{ \kappa > 0 : \exists \delta > 0,\ \theta(x) \ge \kappa \|x - \bar x\|,\ \forall x \in A \cap B(\bar x, \delta) \right\}$, we have $\beta = \varphi (\bar x)$.
 \end{theorem}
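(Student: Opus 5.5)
We argue the two inequalities $\beta\ge\varphi(\bar x)$ and $\beta\le\varphi(\bar x)$ separately, using $\theta(\bar x)=0$ throughout. The equivalence then follows at once. If $\varphi(\bar x)>0$, the first inequality produces a sharp minimum. Conversely, a sharp minimum with some $\kappa>0$ gives $\varphi(\bar x)\ge\kappa>0$ by the second inequality. The tools are three facts. First, $\theta$ is $\|F\|$-Lipschitz and concave (Theorem \ref{YL:thetaprop2}). Second, $d\mapsto\theta'(\bar x;d)$ is $\|F\|$-Lipschitz (Theorem \ref{JBXZDS}(ii)). Third, since $A$ is convex, $x-\bar x\in T_A(\bar x)$ for every $x\in A$ (Remark \ref{RtDFxl}). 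Finite dimensionality is used only to make the unit sphere of $T_A(\bar x)$ compact. We may assume $T_A(\bar x)\neq\{0\}$; otherwise the statement is degenerate.

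\textbf{Step 1 (upper bound $\beta\le\varphi(\bar x)$).} Suppose $\theta(x)\ge\kappa\|x-\bar x\|$ on $A\cap B(\bar x,\delta)$, and take $d\in T_A(\bar x)$ with $\|d\|=1$. Choose $t_k\downarrow0$ and $d_k\to d$ with $x_k=\bar x+t_kd_k\in A$. By the Lipschitz estimate, $\theta(\bar x+t_kd)\ge\theta(x_k)-\|F\|t_k\|d_k-d\|$. For large $k$, $x_k$ lies in the ball, so
\[
\theta(\bar x+t_kd)\ge \kappa t_k\|d_k\|-\|F\|t_k\|d_k-d\|.
\]
Dividing by $t_k$ and letting $k\to\infty$ gives $\theta'(\bar x;d)\ge\kappa$. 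Taking the infimum over such $d$ yields $\varphi(\bar x)\ge\kappa$, and then the supremum over admissible $\kappa$ yields $\beta\le\varphi(\bar x)$.

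\textbf{Step 2 (lower bound $\beta\ge\varphi(\bar x)$ when $\varphi(\bar x)>0$).} This is the main obstacle. Concavity only gives $\theta(\bar x+td)\le t\theta'(\bar x;d)$, which is the wrong direction, so we need uniformity of the difference quotients $q_t(\bar x,d)=\theta(\bar x+td)/t$ over unit directions. Fix $\varepsilon\in(0,\varphi(\bar x))$. Let $\Sigma=\{d\in T_A(\bar x):\|d\|=1\}$, which is compact because $X$ is finite-dimensional.
\begin{itemize}
\item By Remark \ref{RsxfQ}, $q_t(\bar x,d)$ increases to $\theta'(\bar x;d)$ as $t\downarrow0$.
\item Each $q_t(\bar x,\cdot)$ is continuous, and so is the limit $\theta'(\bar x;\cdot)$.
\item Dini's theorem on the compact set $\Sigma$ then gives uniform convergence: there is $\delta>0$ with $q_t(\bar x,d)\ge\varphi(\bar x)-\varepsilon$ for all $t\in(0,\delta)$ and $d\in\Sigma$.
\end{itemize}
If one prefers to avoid Dini, the same conclusion follows from a finite $\varepsilon$-net of $\Sigma$, using that $q_t(\bar x,\cdot)$ is $\|F\|$-Lipschitz uniformly in $t$.

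Now take $x\in A\cap B(\bar x,\delta)$ with $x\neq\bar x$. Set $t=\|x-\bar x\|$ and $d=(x-\bar x)/t$, so $d\in\Sigma$ by convexity of $A$. Then
\[
\theta(x)=t\,q_t(\bar x,d)\ge(\varphi(\bar x)-\varepsilon)\|x-\bar x\|.
\]
Hence $\beta\ge\varphi(\bar x)-\varepsilon$ for every such $\varepsilon$, so $\beta\ge\varphi(\bar x)$.

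Combining the two steps gives $\beta=\varphi(\bar x)$ whenever $\varphi(\bar x)>0$. In the remaining case $\varphi(\bar x)\le0$, Step 1 shows that no $\kappa>0$ is admissible. The set defining $\beta$ is then empty, and under the convention $\sup\emptyset=0$ we get $\beta=0$. By Remark \ref{Rxdt}, $\varphi(\bar x)\ge0$ at points of $E_w$, so $\varphi(\bar x)=0$ there and the identity $\beta=\varphi(\bar x)$ still holds.
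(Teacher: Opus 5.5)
Your proof is correct. Step 1 is exactly the paper's argument for the converse implication: a tangent sequence plus the $\|F\|$-Lipschitz estimate, giving $\theta'(\bar x;d)\ge\kappa\|d\|$ on $T_A(\bar x)$.

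Step 2 takes a different route from the paper. The paper argues by contradiction. It takes $x_m\in A$ with $\|x_m-\bar x\|<1/m$ and $\theta(x_m)<(\varphi(\bar x)-\epsilon)\|x_m-\bar x\|$, and extracts a convergent subsequence of the normalized directions on the unit sphere of $X$. It checks via the sequential definition that the limit $d$ lies in $T_A(\bar x)$, and then uses the Lipschitz bound to get $\theta'(\bar x;d)\le\varphi(\bar x)-\epsilon$.

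You instead upgrade the monotone pointwise convergence $q_t(\bar x,\cdot)\uparrow\theta'(\bar x;\cdot)$ of Remark \ref{RsxfQ} to uniform convergence on the compact set $\Sigma$. Dini's theorem does apply to this one-parameter family, since the relatively open sets $\{d\in\Sigma:\theta'(\bar x;d)-q_t(\bar x,d)<\varepsilon\}$ increase as $t\downarrow 0$. You then read off the sharp-minimum inequality directly, using convexity of $A$ to place $(x-\bar x)/\|x-\bar x\|$ in $\Sigma$.

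What each buys: your version is direct and identifies $\delta$ as a uniform-convergence radius for the difference quotients. The price is that it uses the monotonicity of the quotients (or, in your net variant, their uniform Lipschitz bound) together with $x-\bar x\in T_A(\bar x)$. The paper's argument needs neither; it uses only Lipschitz continuity of $\theta$, existence of $\theta'(\bar x;\cdot)$, and the sequential definition of $T_A(\bar x)$.

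You also treat the boundary cases of the ``moreover'' clause, which the paper establishes only when the equivalent conditions hold. There, Remark \ref{Rxdt} is stated for compact $A$. The fact you actually use, $\varphi(\bar x)\ge 0$ for $\bar x\in E_w$, is the necessity half of Theorem \ref{ZSQE}, which does not require compactness.
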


\begin{proof}    $\Rightarrow$.   Assume $\varphi (\bar x) > 0$. By positive homogeneity, for all $d \in T_A(\bar x)$, $\theta'(\bar x; d) \ge \varphi (\bar x) \|d\|$.
 
 Fix any $\epsilon >0$ with $\epsilon <	\varphi (\bar x)$. We prove that there exists $\delta > 0$ such that
 \begin{equation}\label{DvfB1}
 	\theta(x) \ge \left( 	\varphi (\bar x)-\epsilon \right) \|x - \bar x\| \qquad \forall x \in A \cap B(\bar x, \delta).
 \end{equation}

 If (\ref{DvfB1}) is not true, then for each positive integer $m$ there exists $x_m \in A$ with
 \[
 0 < \|x_m - \bar x\| < \frac{1}{m} \quad\text{and}\quad \theta(x_m) < \left( 	\varphi (\bar x) - \epsilon \right) \|x_m - \bar x\|.
 \]
 Set $t_m := \|x_m - \bar x\|$ and $d_m := (x_m - \bar x)/t_m$; then    $t_m   \to 0  $,     $\|d_m\| = 1$ and
 \begin{equation}\label{DvfB2}
 	\frac{\theta(x_m)}{t_m} < 	\varphi (\bar x) -\epsilon , \quad \forall m    \in \mathbb{N}.  
 \end{equation}
 
 Since $X$ is finite-dimensional, the unit sphere is compact, so there exists a convergent subsequence (still denoted $d_m$) with $d_m \to d$ and $\|d\| = 1$. By definition of the tangent cone (as $x_m = \bar x + t_m d_m \in A$, $t_m \to 0$),     we have      $d \in T_A(\bar x)$.
 
 Using Theorem   \ref{YL:thetaprop2} (ii),  we get
 \[
 \theta(x_m) = \theta(\bar x + t_m d_m) \ge \theta(\bar x + t_m d) - \|F\|  t_m \|d_m - d\|.
 \]
 Dividing by $t_m$,
 \[
 \frac{\theta(x_m)}{t_m} \ge \frac{\theta(\bar x + t_m d)}{t_m} - \|F\| \|d_m - d\|.
 \]
 Let $m \to \infty$, the first term on the right tends to $\theta'(\bar x; d)$, and the second term tends to $0$. Taking the limit inferior yields
 \[
 \liminf_{m \to \infty} \frac{\theta(x_m)}{t_m} \ge \theta'(\bar x; d).
 \]
 This together with  (\ref{DvfB2})  implies that 
 \begin{equation}\label{DvfB3}
 \theta'(\bar x; d) \le 	\varphi (\bar x) -\epsilon. 
  \end{equation}
  By the definition of $\varphi (\bar x)$,  we have $\theta'(\bar x; d) \ge 	\varphi (\bar x)$,  which contradicts (\ref{DvfB3}). 
  Hence (\ref{DvfB1}) holds.
 From (\ref{DvfB1}) we have $\beta \ge 	\varphi (\bar x) -\epsilon$. Since $\epsilon>0$ is arbitrary, $\beta \ge 	\varphi (\bar x)$.

 $ \Leftarrow $. 
 Assume that there exist $\kappa > 0$   and  $\delta > 0$ such that
 \begin{equation}\label{DvfB4}
\theta(x) \ge \kappa \|x - \bar x\|,  \quad  \forall x \in A \cap B(\bar x, \delta).
 \end{equation}
  Take any nonzero direction $d \in T_A(\bar x)$.  
 Then there exist sequences $t_k \downarrow 0$ and $d_k \to d$ such that $x_k := \bar x + t_k d_k \in A$. For sufficiently large $k$, $\|x_k - \bar x\| \le t_k \|d_k\| < \delta$, so  it follows from  (\ref{DvfB4})  that
 \begin{equation}\label{DvfB5}
 \theta(x_k) \ge \kappa \|x_k - \bar x\| = \kappa t_k \|d_k\|.  
\end{equation}
 By Theorem   \ref{YL:thetaprop2} (ii),  we have
 \[
 |\theta(x_k) - \theta(\bar x + t_k d)| \le \|F\| \|t_k d_k - t_k d\| = \|F\| t_k \|d_k - d\|,
 \]
 and so
 \[
 \theta(\bar x + t_k d) \ge \theta(x_k) - \|F\| t_k \|d_k - d\|.
 \]
  This together with   (\ref{DvfB5})  implies that 
 \[
 \frac{\theta(\bar x + t_k d)}{t_k} \ge \kappa \|d_k\| - \|F\| \|d_k - d\|.
 \]
 Let $k \to \infty$; the right-hand side tends to $\kappa \|d\|$ (because $\|d_k\| \to \|d\|$, $\|d_k - d\| \to 0$). The limit of the left-hand side is exactly the directional derivative $\theta'(\bar x; d)$. Hence
 \[
 \theta'(\bar x; d) \ge \kappa \|d\|,   \quad  \forall d \in T_A(\bar x).
 \]
  In particular, on the unit sphere $S = \{d \in T_A(\bar x) : \|d\| = 1\}$, $\theta'(\bar x; d) \ge \kappa$. Taking the infimum gives
 \[
	\varphi (\bar x) = \inf_{d \in S} \theta'(\bar x; d) \ge \kappa > 0.
 \]
 By definition of $\beta$, we also have $	\varphi (\bar x) \ge \beta$. Hence $\beta = 	\varphi (\bar x)$.
  \end{proof}

 Let  $\bar{x} \in E_w$.  The optimal local error bound constant is defined by 
 \[
 \tau^*(\bar{x}) := \inf\{ \tau > 0 : \exists \delta > 0,\; \forall x \in A \cap B(\bar{x}, \delta),\; d(x, E_w) \le \tau \theta(x) \},
 \]
 (with the convention $\inf \varnothing = +\infty$).

 \begin{theorem}  \label{TYXWs}      Assume that $X$ is  a  finite-dimensional   space and  $\bar{x} \in E_w$. 
 	Then the following hold:
 	\begin{enumerate}
 		\item[(i)]   If $	\varphi(\bar{x}) > 0$, then $\theta$ has a  local error bound at $\bar x$ and
 		\[
 		\tau^*(\bar{x}) = \frac{1}{	\varphi(\bar{x})} < +\infty.
 		\]
 				\item[(ii)] 	If $	\varphi(\bar{x}) = 0$ and there exists a direction $d_0\in T_A(\bar{x})\setminus T_{E_w}(\bar{x})$ with $\theta'(\bar{x};d_0)=0$, then $\theta$ does \emph{not} admit a local error bound at $\bar{x}$, i.e., $\tau^*(\bar{x}) = +\infty$.
 		 	\end{enumerate}
 	\end{theorem}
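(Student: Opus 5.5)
Part (i) reduces to Theorem~\ref{TRgfd} together with the observation that $\bar x$ is then an isolated point of $E_w$. Part (ii) is a direct contradiction with the tangent cone identity of Theorem~\ref{ZAQW}(ii).

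\textbf{(i).} Assume $\varphi(\bar x)>0$. By Theorem~\ref{TRgfd}, $\theta$ has a local sharp minimum at $\bar x$, and the supremal sharpness constant $\beta$ equals $\varphi(\bar x)$. Fix $0<\epsilon<\varphi(\bar x)$ and take $\delta>0$ with $\theta(x)\ge(\varphi(\bar x)-\epsilon)\|x-\bar x\|$ on $A\cap B(\bar x,\delta)$.

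Then $\theta(x)>0$ for every $x\in A\cap B(\bar x,\delta)$ with $x\neq\bar x$. Since $E_w\subseteq A$ and $\theta$ vanishes on $E_w$, this gives $E_w\cap B(\bar x,\delta)=\{\bar x\}$.

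For $x\in B(\bar x,\delta/2)$, every point of $E_w$ other than $\bar x$ lies at distance at least $\delta-\|x-\bar x\|>\delta/2>\|x-\bar x\|$ from $x$. Hence
\[
d(x,E_w)=\|x-\bar x\|\qquad\text{for all } x\in B(\bar x,\delta/2).
\]
Combining the two facts, for $x\in A\cap B(\bar x,\delta/2)$,
\[
d(x,E_w)=\|x-\bar x\|\le \frac{1}{\varphi(\bar x)-\epsilon}\,\theta(x).
\]
So $\theta$ has a local error bound at $\bar x$, and $\tau^*(\bar x)\le 1/(\varphi(\bar x)-\epsilon)$. Letting $\epsilon\downarrow0$ gives $\tau^*(\bar x)\le 1/\varphi(\bar x)$.

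For the reverse inequality, suppose some $\tau>0$ works with radius $\delta'$. Shrinking $\delta'$ below $\delta/2$, we have $d(x,E_w)=\|x-\bar x\|$ on that ball, so $\theta(x)\ge \tau^{-1}\|x-\bar x\|$ on $A\cap B(\bar x,\delta')$. Thus $\tau^{-1}$ is an admissible sharpness constant, and $\tau^{-1}\le\beta=\varphi(\bar x)$, i.e.\ $\tau\ge 1/\varphi(\bar x)$. Taking the infimum over admissible $\tau$ yields $\tau^*(\bar x)=1/\varphi(\bar x)$.

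\textbf{(ii).} Suppose, to the contrary, that $\theta$ admits a local error bound at $\bar x$. Theorem~\ref{ZAQW}(ii) does not require compactness of $A$, so it applies and gives
\[
T_{E_w}(\bar x)=\{d\in T_A(\bar x):\theta'(\bar x;d)=0\}.
\]
The given direction $d_0$ satisfies $d_0\in T_A(\bar x)$ and $\theta'(\bar x;d_0)=0$, so $d_0\in T_{E_w}(\bar x)$. This contradicts the hypothesis $d_0\notin T_{E_w}(\bar x)$. Hence no $\tau$ is admissible and $\tau^*(\bar x)=+\infty$.

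The hypothesis $\varphi(\bar x)=0$ is only contextual: since $\bar x\in E_w$, Remark~\ref{Rxdt} gives $\varphi(\bar x)\ge0$, and the existence of such a $d_0$ forces $\varphi(\bar x)=0$ when $d_0\neq0$.

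The step needing the most care is the localization $d(x,E_w)=\|x-\bar x\|$ in (i). Without it, the sharp-minimum constant and the error-bound constant could not be identified with each other in both directions, and the halved radius $\delta/2$ is what makes this work.
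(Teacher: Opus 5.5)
Your proof is correct. Part (i) follows the paper's skeleton: isolation $E_w\cap B(\bar x,\delta)=\{\bar x\}$, the identity $d(x,E_w)=\|x-\bar x\|$ on the half-radius ball, and conversion between error-bound and sharpness constants. The one difference is the lower bound $\tau^*(\bar x)\ge 1/\varphi(\bar x)$. You cite $\beta=\varphi(\bar x)$ from Theorem~\ref{TRgfd}. The paper instead re-derives it by choosing a minimizer $d_0$ of $\theta'(\bar x;\cdot)$ on the compact unit sphere of $T_A(\bar x)$ and passing to the limit along $x_k=\bar x+t_kd_k$. Your version shows that attainment of $\varphi(\bar x)$ is not needed there: only the dimension-free half $\beta\le\varphi(\bar x)$ of Theorem~\ref{TRgfd} is used, and finite-dimensionality enters only through the isolation step. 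The paper's version is more self-contained.

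Part (ii) takes a genuinely different route. The paper argues directly: it extracts $d(\bar x+td_0,E_w)\ge\alpha t$ from $d_0\notin T_{E_w}(\bar x)$ (asserted without proof), then contrasts $d(x_k,E_w)\ge\alpha t_k/2$ with $\theta(x_k)/t_k\to 0$. That argument is quantitative, but it is really the proof of the inclusion $\{d\in T_A(\bar x):\theta'(\bar x;d)=0\}\subseteq T_{E_w}(\bar x)$ from Theorem~\ref{ZAQW}(ii), run in contrapositive. Citing that theorem makes this explicit and avoids the unproved tangent-cone step. It also shows that (ii) uses neither finite-dimensionality nor $\varphi(\bar x)=0$.

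A small aside on your closing remark: Remark~\ref{Rxdt} is stated for compact $A$, but the inequality $\varphi(\bar x)\ge 0$ you want follows from the necessity half of Theorem~\ref{ZSQE}, which does not use compactness. Also, $d_0\neq 0$ is automatic, since $0\in T_{E_w}(\bar x)$.
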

 
  \begin{proof}   (i).  Assume that $	\varphi(\bar{x}) > 0$.    Fix $\eta \in (0, 	\varphi(\bar{x}))$.  It follows from  (\ref{DvfB1})  that    there exists $\delta_0 > 0$ such that
  	 \begin{equation}\label{Eyhub1}
  \theta(x) \ge (\varphi(\bar{x}) - \eta) \|x - \bar{x}\|,  \quad	\forall x \in A \cap B(\bar{x}, \delta_0). 
  \end{equation}
  Due to $\bar{x} \in E_w$ and (\ref{Eyhub1}), we have 
  $$ \theta(x) \ge (\varphi(\bar{x}) - \eta) \|x - \bar{x}\|  \ge  (\varphi(\bar{x}) - \eta) d(x, E_w) ,  \quad	\forall x \in A \cap B(\bar{x}, \delta_0), $$
  and so 
  $$ \frac{1}{	\varphi(\bar{x}) - \eta} \, \theta(x)    \ge   d(x, E_w) ,  \quad	\forall x \in A \cap B(\bar{x}, \delta_0). $$
 This yields  that $\theta$ has a  local error bound at $\bar x$ and 
 \[
 \tau^*(\bar{x}) \le \frac{1}{	\varphi(\bar{x}) - \eta}.
 \]
 As $\eta \in (0, 	\varphi(\bar{x}))$ is arbitrary, letting $\eta \downarrow 0$ gives
   \begin{equation}\label{Eyhub2}
 \tau^*(\bar{x}) \le \frac{1}{	\varphi(\bar{x})}.  
  \end{equation}
  
  We show that 
   \begin{equation}\label{Eyhub3}
  	E_w \cap B(\bar x, \delta_0) = \{\bar x\}.
  \end{equation}
  	Indeed, if there exists $x' \in E_w \cap B(\bar x, \delta_0)$ with $x' \neq \bar x$, then by  (\ref{Eyhub1}),
  \[
  0 = \theta(x') \ge  (	\varphi(\bar{x}) - \eta) \|x' - \bar x\| > 0,
  \]
which is  a contradiction. Therefore,  (\ref{Eyhub3}) holds.
  
   Set $\delta' = \delta_0 /2$. Take any $x \in B(\bar{x}, \delta')$. For any $y \in E_w \setminus \{\bar{x}\}$, we conclude from  (\ref{Eyhub3})  that $\|y - \bar{x}\| \ge \delta_0$,  and  so
  \[
  \|x - y\| \ge \|y - \bar{x}\| - \|x - \bar{x}\| \ge \delta_0 - \delta' = \delta' > \|x - \bar{x}\|.
  \]
  This means that
    \begin{equation}\label{Eyhub4}
  d(x, E_w) = \|x - \bar{x}\|,   \quad	\forall  x \in A \cap B(\bar{x}, \delta').
  \end{equation}
  
  Suppose $\tau^*(\bar{x}) < 1/	\varphi(\bar{x})$; then there exists $\tau'$ with $\tau^*(\bar{x}) \le \tau' < 1/	\varphi(\bar{x})$ and $\delta'' > 0$ such that
   \begin{equation}\label{Eyhub5}
  d(x, E_w) \le \tau' \theta(x),   \quad  \forall x \in A \cap B(\bar{x}, \delta'').
  \end{equation}
  Take $\delta = \min\{\delta', \delta ''\}$.  
  It follows from  (\ref{Eyhub4}) and (\ref{Eyhub5})   that
   \begin{equation}\label{Eyhub6}
  	\theta(x) \ge \frac{1}{\tau'} \|x - \bar{x}\|,   \quad	\forall  x \in A \cap B(\bar{x}, \delta).
  \end{equation}
  Since $X$ is finite-dimensional, we obtain that  $S := \{d \in T_A(\bar{x}) : \|d\| = 1\}$ is compact. By Theorem   \ref{JBXZDS}  (ii),  the map $d \mapsto \theta'( x; d)$ is   continuous.  Then there exists  $d_0 \in S$  such that $\theta'(\bar{x}; d_0) = 	\varphi(\bar{x})$.  Due to $d_0 \in  T_A(\bar{x}) $, 
   there exist sequences $t_k \downarrow 0$, $d_k \to d_0$ such that $x_k = \bar{x} + t_k d_k \in A$. For sufficiently large $k$, $x_k \in B(\bar{x}, \delta)$, and applying (\ref{Eyhub6})  yields $\theta(x_k) \ge \frac{1}{\tau'} \|x_k - \bar{x}\| = \frac{1}{\tau'} t_k \|d_k\|$, and so
  \[
  \frac{\theta(x_k)}{t_k} \ge \frac{\|d_k\|}{\tau'}.
  \]
  Since $\|d_k\| \to 1$, taking the limit inferior gives 
   \begin{equation}\label{Eyhub7}
   \liminf_{k \to \infty} \frac{\theta(x_k)}{t_k} \ge \frac{1}{\tau'}.
   \end{equation}
  On the other hand, by Theorem   \ref{YL:thetaprop2} (ii),  we have
  \[
  \frac{\theta(x_k)}{t_k} \le \frac{\theta(\bar{x} + t_k d_0)}{t_k} + \|F\|   \|d_k - d_0\|,
  \]
  and taking the limit superior, using the definition of directional derivative and $\|d_k - d_0\| \to 0$, gives
  \[
  \limsup_{k \to \infty} \frac{\theta(x_k)}{t_k} \le \theta'(\bar{x}; d_0) = 	\varphi(\bar{x}).
  \]
  Combining this with  (\ref{Eyhub7}), we get
  \[
  \frac{1}{\tau'} \le 	\varphi(\bar{x}) \quad \Longrightarrow \quad \tau' \ge \frac{1}{	\varphi(\bar{x})},
  \]
  contradicting $\tau' < 1/	\varphi(\bar{x})$.  Therefore, we must have $\tau^*(\bar{x}) \ge 1/	\varphi(\bar{x})$. Together with (\ref{Eyhub2}) we obtain $\tau^*(\bar{x}) = 1/	\varphi(\bar{x})$.

  (ii).    Suppose that $	\varphi(\bar{x}) = 0$ and there exists a direction $d_0\in T_A(\bar{x})\setminus T_{E_w}(\bar{x})$ with $\theta'(\bar{x};d_0)=0$. 
  Since $d_0\notin T_{E_w}(\bar{x})$,    there exists  $\alpha>0$ and $t_0>0$ such that
\begin{equation}\label{EmkiX1}
 d(\bar{x}+td_0,\,E_w) \ge \alpha t,    \quad   \forall t\in(0,t_0).
 \end{equation}
It follows from $d_0\in T_A(\bar{x})$ that    there exist sequences $\{t_k\}\downarrow 0$ and $\{d_k\}\to d_0$ such that
  \[
  x_k := \bar{x}+t_k d_k \in A\qquad   \forall k\in\mathbb{N}.  
  \]
   In view of Theorem   \ref{YL:thetaprop2} (ii),  we have
  \[
  \theta(x_k) = \theta(\bar{x}+t_k d_k) \le \theta(\bar{x}+t_k d_0) + L t_k \|d_k - d_0\|,
  \]
and so
  \[
  \frac{\theta(x_k)}{t_k} \le \frac{\theta(\bar{x}+t_k d_0)}{t_k} + L \|d_k - d_0\|.
  \]
  Let $k\to\infty$.  By   $\theta'(\bar{x};d_0)=0$ and $d_k\to d_0$, we get
  \[
  \limsup_{k\to\infty} \frac{\theta(x_k)}{t_k} \le 0 + 0 = 0.
  \]
  This together with $\theta(x_k) \ge 0$   implies that
  \begin{equation}\label{EmkiX2}
  	\lim_{k\to\infty} \frac{\theta(x_k)}{t_k} = 0.
  \end{equation}
    It is clear that
   \begin{equation}\label{EmkiX3}
  d(x_k, E_w) \ge d(\bar{x}+t_k d_0, E_w) - \|x_k - (\bar{x}+t_k d_0)\| = d(\bar{x}+t_k d_0, E_w) - t_k \|d_k - d_0\|.
   \end{equation}
  Take $k$ large enough so that $t_k < t_0$ and $\|d_k - d_0\| \le \alpha/2$.    It follows from  (\ref{EmkiX1})  that  $d(\bar{x}+t_k d_0, E_w) \ge \alpha t_k$.  Combining this with  (\ref{EmkiX3}), we get
   \begin{equation}\label{EmkiX4}
  d(x_k, E_w) \ge \alpha t_k - t_k\frac{\alpha}{2} = \frac{\alpha}{2} t_k > 0.  
   \end{equation}
  
 Suppose that $\theta$ has a  local error bound at $\bar x$.
  Then  there exist $\tau>0$ and $\bar{\delta } >0$ such that
   \begin{equation}\label{EmkiX5}
d(x,E_w)\le \tau\theta(x),   \quad  \forall x\in A\cap B(\bar{x},\bar{\delta }).
 \end{equation}
 Choose $k$ large enough so that $x_k\in B(\bar{x},\bar{\delta })$.   We conclude from  (\ref{EmkiX4})  and  (\ref{EmkiX5})  that
  \[
  \frac{\alpha}{2} t_k \le d(x_k, E_w) \le \tau \theta(x_k) .
  \]
  This together with    (\ref{EmkiX2})    yields   that 
  $$ 0 < {\alpha  \over 2 \tau} \le {{   \theta \left( {{x_k}} \right)} \over {{t_k}}} \to 0,$$
which leads to a contradiction.  Therefore, $\theta$ does not have a local error bound at $\bar{x}$, i.e., $\tau^*(\bar{x}) = +\infty$.
     \end{proof}

Theorem  \ref{TYXWs} (ii) shows: if $	\varphi(\bar{x})=0$ and, additionally, there exists a direction $d_0\in T_A(\bar{x})\setminus T_{E_w}(\bar{x})$ with $\theta'(\bar{x};d_0)=0$, then a local error bound certainly fails.   The following counterexample demonstrates that the additional condition is essential.

 \begin{counterexample}   
 	Let $X=\mathbb{R}^2$, $Y=\mathbb{R}^2$ with the Euclidean norm.  
 	Set $A=[0,1]^2$, $C=\mathbb{R}^2_+=\{y=(y_1,y_2):y_1\ge0,\ y_2\ge0\}$, and let $F:X\to Y$ be the identity map ($F(x)=x$).  
  Weakly efficient solution set is clearly
 	\[
 	E_w = \{(x_1,x_2)\in[0,1]^2 : x_1=0\ \text{or}\ x_2=0\},
 	\]
 	and the merit function $\theta$ is defined by
 	\[
 	\theta(x) = \sup_{a\in A} \inf_{y^*\in S(C^+)} \langle y^*, x - a\rangle .
 	\]
 	Since $C^+ = C = \mathbb{R}^2_+$ and $S(C^+) = \{y^*\in\mathbb{R}^2_+ : \|y^*\|=1\}$, a direct computation gives
 	\[
 	\theta(x) = \min\{x_1,\,x_2\}, \qquad \forall\, x\in C .
 	\]
 	
 	Take $\bar{x}=(0,0)\in E_w$.  Clearly $T_A(\bar{x}) = \mathbb{R}^2_+ = C$.  
 	For any direction $d=(d_1,d_2)\in T_A(\bar{x})$,
 	\[
 	\theta(\bar{x}+td) = \theta(td_1,td_2) = t\min\{d_1,d_2\},
 	\]
 	hence $\theta'(\bar{x};d) = \min\{d_1,d_2\}$.  
 	Consequently, the infimum of the directional derivative over the unit sphere $S = \{d\in C : \|d\|=1\}$ is
 	\[
 	\varphi(\bar{x}) = \inf_{d\in C,\ \|d\|=1} \min\{d_1,d_2\} = 0 .
 	\]
 	
 	For any $x=(x_1,x_2)\in A$, the distance to the solution set is $d(x,E_w) = \min\{x_1,\,x_2\}$, which coincides exactly with $\theta(x)$.  
 	Hence $d(x,E_w) = \theta(x)$ for all $x\in A$, yielding a local (in fact global) error bound at $\bar{x}$ with constant $\tau=1$.  
 	The optimal local error bound constant is $\tau^*(\bar{x}) = 1 < +\infty$.
 	
 	A straightforward computation shows that
 	\begin{equation}\label{Ecyta1}
 		T_{E_w}(\bar{x}) = \{(d_1,d_2): d_1\ge0,\; d_2=0\} \cup \{(d_1,d_2): d_1=0,\; d_2\ge0\}.
 	\end{equation}
 	
 	Finally, we verify that the condition ``there exists a direction $d_0\in T_A(\bar{x})\setminus T_{E_w}(\bar{x})$ with $\theta'(\bar{x};d_0)=0$'' is not satisfied.  
 	Suppose, to the contrary, that such a direction $\bar{d}=(d_1,d_2)\in T_A(\bar{x})\setminus T_{E_w}(\bar{x})$ exists with $\theta'(\bar{x};\bar{d})=0$.  
 	Since $\bar{d}\in T_A(\bar{x})$, we have $d_1\ge0$ and $d_2\ge0$.  From $\theta'(\bar{x};\bar{d}) = \min\{d_1,d_2\}=0$, it follows that $d_1=0$ or $d_2=0$.  By \eqref{Ecyta1}, this forces $\bar{d}\in T_{E_w}(\bar{x})$, contradicting the choice of $\bar{d}$.  
 	Thus no such direction exists.
 	
Thus, the hypothesis of Theorem~\ref{TYXWs}(ii) is not satisfied, yet a local error bound still exists. This demonstrates that the additional condition in Theorem~\ref{TYXWs}(ii) cannot be removed.
 \end{counterexample}

\section{Conclusion}

This paper has developed a systematic theory of directional derivatives and error bounds for the concave merit function arising in linear vector optimization. The central philosophy of our approach is to exploit the dual representation of the oriented distance function and the variational structure of the merit function itself, thereby establishing a rigorous bridge between analytic properties of the merit function and the geometric features of the weakly efficient solution set.

Our analysis demonstrates that the directional derivatives of the merit function encode fundamental geometric information about the solution set. The explicit connection between the zero-directional-derivative cone, the tangent cone of the feasible set, and the tangent cone of the solution set provides a coherent framework for understanding how first-order information of the merit function reflecting the local structure of the weakly efficient solution set. This perspective unifies several classical concepts from nonsmooth analysis, variational geometry, and multiobjective optimization, and reveals the merit function as a natural scalarization device that preserves essential geometric features of the original vector problem.

The error bound theory developed in this work establishes a comprehensive equivalence framework that consolidates and extends many classical results from the literature. The characterizations, ranging from linear regularity and global slope conditions to perturbation stability and sublevel-set Hausdorff continuity, highlight the multifaceted nature of the error bound property. The unified treatment shows that seemingly disparate conditions—geometric, analytic, and stability-based—are in fact equivalent manifestations of the same underlying regularity phenomenon. Moreover, the characterization via the unit-sphere minimal directional derivative provides a conceptually simple and computationally meaningful criterion for global error bounds, emphasizing the role of uniform descent away from the solution set.

From a broader perspective, the results obtained here contribute to the growing body of research that seeks to merge the theory of variational analysis with vector optimization. The interplay between directional derivatives, error bounds, and tangent cone geometry exemplifies how tools from nonsmooth analysis can be effectively adapted to the multiobjective setting. The explicit formulas and quantitative estimates established in this paper provide a theoretical foundation for algorithmic developments, particularly for the convergence analysis of descent methods and proximal algorithms tailored to vector optimization problems in Banach spaces.

Several promising directions for future investigation emerge from this work. 

\begin{itemize}
	\item[(i)] Extending the directional derivative analysis to nonlinear objective maps, including smooth and nonsmooth cases, would broaden the applicability of the theory to a wider class of vector optimization problems. This would require developing appropriate chain rules and generalized derivative concepts for the composition of the oriented distance function with nonlinear mappings.
	
	\item[(ii)] Relaxing the compactness and convexity assumptions on the feasible set \(A\) remains a significant challenge. Non-convex or unbounded feasible sets arise naturally in many applications, and extending the error bound characterizations to such settings would require new techniques from variational analysis.
	
	\item[(iii)] The counterexample provided in Section~6 reveals that the local error bound theory when \(\varphi(\bar{x})=0\) is more subtle than previously recognized. A complete characterization of local error bounds in this critical case remains an open problem and deserves further investigation.
	
	\item[(iv)] It would be of interest to extend the present framework—directional derivatives, error bounds, and tangent cone characterizations—from vector optimization to set optimization problems. This can be achieved by employing generalized versions of the Hiriart-Urruty oriented distance function (see Ha \cite{Ha2018}, Han \cite{Han2022}) to define suitable merit functions for set optimization problems, and then investigating their directional derivatives and error bounds in the spirit of this paper.
	
	\item[(v)] The computational realization of the merit function \(\theta\) and the implementation of gradient-like algorithms that exploit the directional derivative structure deserve further investigation, particularly in infinite-dimensional Banach spaces and in problems with polyhedral or conic constraints. Developing efficient numerical methods based on the error bound characterizations presented here would bridge the gap between theory and practice.
	
\end{itemize}

In conclusion, this paper provides a comprehensive theoretical framework that connects directional derivatives, error bounds, and geometric properties of the solution set in vector optimization. The results not only deepen our understanding of the structure of multiobjective optimization problems but also offer practical tools for algorithmic development and convergence analysis. We hope that this work will stimulate further research at the intersection of variational analysis and vector optimization, and contribute to the ongoing development of both fields.

\bigskip
\noindent
{\bf Funding.} This work was supported by the National Natural Science Foundation of China [11801257] and the Natural Science Foundation of Jiangxi Province [20232BAB211012].

\end{document}